\documentclass[11pt,reqno,fleqn]{amsart}

\usepackage[T1]{fontenc}
\usepackage{cmap}          
\usepackage[utf8]{inputenc}
\usepackage{amsmath,amssymb,amsthm,mathtools}
\usepackage{bm}
\usepackage{enumitem}
\usepackage[numbers,sort&compress]{natbib}

\usepackage{xcolor}
\definecolor{DarkSlateGrey}{rgb}{0.18,0.31,0.31}
\usepackage{hyperref}
\hypersetup{
  colorlinks=true,
  linkcolor=DarkSlateGrey,
  citecolor=DarkSlateGrey,
  urlcolor=DarkSlateGrey,
  pdftitle={On the local well-posedness of the Benjamin-Ono-Zakharov-Kuznetsov equation},
  pdfauthor={Ailton C. Nascimento},
  pdfsubject={Local well-posedness of the Benjamin-Ono-Zakharov-Kuznetsov equation},
  pdfkeywords={BO-ZK equation, local well-posedness, maximal-function estimate, local smoothing, normal form}
}

\usepackage{aliascnt}
\usepackage[nameinlink,capitalise,noabbrev]{cleveref}

\newtheorem{theorem}{Theorem}[section]
\newaliascnt{proposition}{theorem}
\newtheorem{proposition}[proposition]{Proposition}
\aliascntresetthe{proposition}
\newaliascnt{lemma}{theorem}
\newtheorem{lemma}[lemma]{Lemma}
\aliascntresetthe{lemma}
\newaliascnt{corollary}{theorem}
\newtheorem{corollary}[corollary]{Corollary}
\aliascntresetthe{corollary}
\newaliascnt{claim}{theorem}

\aliascntresetthe{claim}
\theoremstyle{definition}
\newaliascnt{definition}{theorem}
\newtheorem{definition}[definition]{Definition}
\aliascntresetthe{definition}
\theoremstyle{remark}
\newaliascnt{remark}{theorem}
\newtheorem{remark}[remark]{Remark}
\aliascntresetthe{remark}
\numberwithin{equation}{section}

\newcommand{\R}{\mathbb R}
\newcommand{\Z}{\mathbb Z}
\newcommand{\Sscr}{\mathcal S}
\newcommand{\supp}{\operatorname{supp}}
\newcommand{\sgn}{\operatorname{sgn}}
\newcommand{\Op}{\operatorname{Op}}
\newcommand{\dd}{\,d}
\newcommand{\eps}{\varepsilon}
\newcommand{\ip}[2]{\left\langle #1,#2\right\rangle}
\newcommand{\norm}[1]{\left\lVert #1\right\rVert}
\newcommand{\abs}[1]{\left\lvert #1\right\rvert}
\newcommand{\B}{\mathcal B}
\newcommand{\Ncal}{\mathcal N}

\begin{document}

\title[Local well-posedness for BO--ZK]
      {On the local well-posedness of the\\
       Benjamin--Ono--Zakharov--Kuznetsov equation}

\author{Ailton C. Nascimento}
\address{Departamento de Matem\'atica, Universidade Federal do Piau\'i,
  Campus Universit\'ario Ministro Petr\^onio Portella, Ininga,
  64049-550 Teresina, PI, Brazil}
\email{ailton.nascimento@ufpi.edu.br}

\subjclass[2020]{Primary 35Q53; Secondary 35A01, 35B30, 35B65, 42B20}

\keywords{BO--ZK equation; local well-posedness; maximal-function
  estimate; local smoothing; normal form; positive commutator}

\begin{abstract}
	We study the Cauchy problem for the Benjamin--Ono--Zakharov--Kuznetsov equation on $\mathbb R^2$. Following the strategy of Kenig and Ziesler, we establish new maximal-function estimates adapted to the BO--ZK equation and use them to implement the Kenig--Koenig method. As a result, we improve the best previously known isotropic result of Nascimento (2020), lowering the local well-posedness threshold from $s>5/4$ to $s>19/16$. At the BO--ZK endpoint, the resulting isotropic data class also contains the anisotropic $E^{5/4+}$ class of the preceding theory. On bounded subsets of $H^s(\mathbb R^2)$, the lifespan may be chosen so that
$
	T\gtrsim_s \bigl(1+\norm{u_0}_{H^s}\bigr)^{-8}.
$
	The proof combines a sharp dyadic mixed maximal-function estimate with an anisotropic local-smoothing mechanism that exploits the complementary behavior of the longitudinal and transverse group velocities. In particular, transverse dispersion compensates for the degeneration of longitudinal smoothing near the characteristic region. Together with refined short-time Strichartz estimates and a modified energy argument, these ingredients close the nonlinear estimates at the stated regularity. Existence, uniqueness, and continuous dependence on the initial data are then established in the corresponding solution class. The resulting threshold reflects the present optimization of the method and is not claimed to be sharp.
\end{abstract}

\maketitle

\section{Introduction}

We study the Cauchy problem
\begin{equation}\label{IVP}
\begin{cases}
 u_t+\mathcal H_xu_{xx}+u_{xyy}+uu_x=0,
   &(x,y)\in\R^2,\ t\in\R,\\
 u(x,y,0)=u_0(x,y),
\end{cases}
\end{equation}
where $\mathcal H_x$ denotes the Hilbert transform in the $x$ variable.  With the Fourier convention
\[
 \widehat f(\xi,\eta)=\int_{\R^2}e^{-i(x\xi+y\eta)}f(x,y)\dd x\dd y,
\]
the linear group is
\begin{equation}\label{eq:group}
 \widehat{U(t)f}(\xi,\eta)
 =e^{it\omega(\xi,\eta)}\widehat f(\xi,\eta),
 \qquad
 \omega(\xi,\eta)=\xi(\eta^2-|\xi|).
\end{equation}
The equation is a two-dimensional extension of the Benjamin--Ono equation in which the weak transverse dispersion is of Zakharov--Kuznetsov type.  It arises in models of two-dimensional internal waves and in the evolution of BO-type solitary waves under weak lateral dispersion; see, for instance, \cite{Benjamin,Latorre,Ono}.

\subsection{Background and prior results}\label{subsec:background}

The solitary-wave theory for generalized BO--ZK equations was
developed in \cite{EsfahaniPastorInstability,EsfahaniPastorBona}.
The first of these works establishes instability in the corresponding
supercritical regime, whereas the second classifies the parameter
range in which solitary waves exist and studies their regularity,
decay, and orbital stability in the stable regime.  The variational
analysis in \cite{EsfahaniPastorBona} also provides an anisotropic
Gagliardo--Nirenberg inequality adapted to the BO--ZK energy.  Its
optimal constant was subsequently characterized in
\cite{EsfahaniPastorSharp} in terms of the associated ground states,
leading in particular to uniform a priori bounds for smooth solutions
in the energy space.

Unique continuation was investigated in
\cite{EsfahaniPastorUCP,CunhaPastorWeighted}.  In
\cite{EsfahaniPastorUCP}, a sufficiently regular solution whose
support remains in a fixed rectangle throughout its lifespan is shown
to vanish identically.  Cunha and Pastor
\cite{CunhaPastorWeighted} strengthened this rigidity mechanism: a
sufficiently smooth local solution with the prescribed algebraic
decay at three distinct times must also be trivial.

On periodic domains, related recent work concerns control and
stabilization.  In \cite{NascimentoControl2026}, a localized,
mean-preserving damping acting in the Benjamin--Ono direction is used
to obtain observability, semi-global exponential stabilization, and
local exact controllability for BO--ZK on $\mathbb T^2$.  The
dispersion-generalized periodic model is considered in
\cite{NascimentoDGBOZK2026}; in the parameter range specified there,
a dissipation-normalized Bourgain-space argument yields global
$L^2$ well-posedness and small-data exponential stabilization.  These
periodic results are analytically distinct from the Euclidean Cauchy
problem studied here, but they further illustrate the role played by
the anisotropic longitudinal smoothing.

We now summarize the well-posedness theory on $\R^2$.  Cunha and
Pastor \cite{CunhaPastorWeighted} first proved local well-posedness in
$H^s(\R^2)$ for $s>2$ by parabolic regularization, together with the
conservation of mass and energy.  They also obtained an anisotropic
theory in $H^{s_1,s_2}(\R^2)$ for $s_2>2$ and $s_1\ge s_2$.  Esfahani
and Pastor \cite{EsfahaniPastor}, following the strategy of
Molinet--Saut--Tzvetkov \cite{MolinetSautTzvetkov}, proved that the
data-to-solution map fails to be $C^2$ at the origin in every
$L^2$-based anisotropic Sobolev space.  Thus a direct Picard iteration
on the Duhamel formula cannot provide the well-posedness theory.

Cunha and Pastor \cite{CunhaPastorLow} subsequently lowered the
isotropic threshold to $s>11/8$ by adapting the Koch--Tzvetkov
refinement of the energy method \cite{KochTzvetkov}.  For the
dispersion-generalized equation
\begin{equation}\label{eq:DGBOZK}
 \partial_tu-D_x^\alpha\partial_xu+\partial_x\partial_y^2u
 =u\partial_xu,
 \qquad 1\le\alpha\le2,
\end{equation}
Ribaud and Vento \cite{RibaudVento} introduced the anisotropic spaces
\[
 E_\alpha^r
 =\left\{f:\ \langle |\xi|^\alpha+\eta^2\rangle^r
 \widehat f(\xi,\eta)\in L^2(\R^2)\right\}
 =H^{\alpha r,2r}(\R^2)
\]
and proved local well-posedness for $r>2/\alpha-3/4$, as well as
global well-posedness in the energy space $E_\alpha^{1/2}$ when
$\alpha>8/5$.  At the BO--ZK endpoint $\alpha=1$, this yields the
anisotropic threshold $r>5/4$; below we abbreviate $E^r:=E_1^r$.

The best previously known isotropic result is due to Nascimento
\cite{Nascimento2020}.  For every $s>5/4$ and
$\phi\in H^s(\R^2)$, it provides a unique solution
\[
 u\in C([0,T];H^s(\R^2)),
 \qquad
 u,\ \partial_xu\in L^1([0,T];L^\infty(\R^2)),
\]
with continuous dependence on the initial data.  Its proof combines a
refined Strichartz estimate \cite{KenigKPI} with the energy method and also establishes
propagation of regularity.  The present paper lowers this isotropic
threshold from $5/4$ to $19/16$.

Our purpose is to implement, in two dimensions, the refinement introduced by Kenig and Koenig for the Benjamin--Ono equation \cite{KenigKoenig}.  The two essential ingredients in that argument are a short-time Strichartz estimate and a maximal-function/local-smoothing pairing.  The first component gives a natural candidate threshold.  The second is genuinely anisotropic because the group velocities are
\begin{equation}\label{eq:velocities}
 v_x(\xi,\eta)=\partial_\xi\omega=\eta^2-2|\xi|,
 \qquad
 v_y(\xi,\eta)=\partial_\eta\omega=2\xi\eta.
\end{equation}
The longitudinal velocity vanishes on the characteristic parabola
\begin{equation}\label{eq:Gamma}
 \Gamma=\{(\xi,\eta):\eta^2=2|\xi|\},
\end{equation}
while on that set $|v_y|\simeq |\xi|^{3/2}$.  Thus the transverse smoothing becomes stronger precisely where the longitudinal smoothing degenerates.

The main result is the following.

\begin{theorem}\label{thm:main}
Let $s>19/16$ and $u_0\in H^s(\R^2)$.  There exists
\[
 T=T(\norm{u_0}_{H^s})>0
\]
and a solution of \eqref{IVP} such that
\begin{equation}\label{eq:solutionclass}
 u\in C([0,T];H^s(\R^2)),
 \qquad
 \nabla u\in L^2([0,T];L^\infty(\R^2)).
\end{equation}
The solution is unique in the class \eqref{eq:solutionclass}.
For every $R_0>0$ there exists $T_{R_0}>0$ such that the data-to-solution map is continuous from the ball
\[
 \{u_0\in H^s(\R^2):\norm{u_0}_{H^s}\le R_0\}
\]
into $C([0,T_{R_0}];H^s(\R^2))$.  Moreover, one may take
\begin{equation}\label{eq:lifespan}
 T_{R_0}\ge c_s(1+R_0)^{-8}.
\end{equation}
\end{theorem}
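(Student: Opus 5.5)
The proof follows the Kenig--Koenig scheme: an a priori bound on $f(T):=\norm{\nabla u}_{L^2_TL^\infty_{xy}}$ for smooth solutions, closed by a bootstrap, and then a compactness/Bona--Smith argument for existence, uniqueness and continuous dependence.

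\textbf{Step 1: energy estimate.} For smooth solutions (available from the $s>2$ theory of Cunha--Pastor) we use the standard commutator argument, via Kato--Ponce, to obtain
\[
\norm{u}_{L^\infty_TH^s}^2\le\norm{u_0}_{H^s}^2+C\int_0^T\norm{\partial_xu}_{L^\infty}\norm{u}_{H^s}^2\,dt .
\]
This reduces matters to controlling $\norm{\partial_x u}_{L^1_TL^\infty}\le T^{1/2}f(T)$.

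\textbf{Step 2: refined short-time Strichartz estimate.} We decompose $u=\sum_N P_Nu$ dyadically. On intervals of length $N^{-\theta}$ we apply the linear Strichartz estimate for $U(t)$ (the $L^2_tL^\infty$ bound with loss coming from the degenerate Hessian of $\omega$) to the Duhamel formula. The nonlinearity $P_N(uu_x)$ is treated in the Kenig--Koenig way: the high--low interaction $u_{\ll N}\partial_xP_Nu$ is controlled through the maximal function $\norm{u_{\ll N}}_{L^2_xL^\infty_{yT}}$ paired with the local-smoothing norm $\norm{\partial_xP_N u}_{L^\infty_xL^2_{yT}}$, and the high--high terms are handled directly. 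Summing over the $N^{\theta}T$ subintervals gives
\[
f(T)\lesssim T^{a}\bigl(1+f(T)\bigr)\norm{u}_{L^\infty_TH^s}
\]
with some $a>0$, provided $s>19/16$. This threshold comes from optimizing $\theta$ against the Strichartz loss.

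\textbf{Step 3: anisotropic local smoothing and maximal estimates.} This is the main obstacle. The smoothing estimate $\norm{D_x^{1/2}U(t)P_Nf}_{L^\infty_xL^2_{yt}}$ degenerates near the parabola $\Gamma$, because $v_x=0$ there. I would split the frequency region with a smooth cutoff at distance $|v_x|\sim N^{\beta}$ from $\Gamma$. Away from $\Gamma$ we use $x$-smoothing with gain $|v_x|^{-1/2}$. Near $\Gamma$ we instead use $y$-smoothing, which gains $|v_y|^{-1/2}\sim N^{-3/4}$, together with a dual mixed maximal estimate in $L^2_yL^\infty_{xT}$. The dyadic maximal estimate $\norm{U(t)P_Nf}_{L^2_xL^\infty_{yT}}\lesssim N^{\gamma}\norm{f}_{L^2}$ is proved by $TT^*$ and oscillatory-integral bounds on the kernel. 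The difficulty is to choose $\beta$ so that the losses from the two regions balance and stay below the Strichartz loss. If the straightforward split fails, I would try a positive-commutator (modified energy) argument to absorb the near-$\Gamma$ piece.

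\textbf{Step 4: bootstrap, then existence, uniqueness and continuity.} Combining Steps 1--3, a continuity argument for $g(T)=\norm{u}_{L^\infty_TH^s}+f(T)$ closes on $T\le c_s(1+R_0)^{-8}$. The exponent $8$ comes from tracking the powers of $T$ produced in Step 2. Existence follows by regularizing the data and passing to the limit through $L^2$ difference estimates, which are controlled by $\norm{\nabla u}_{L^1_TL^\infty}$. Uniqueness in the class \eqref{eq:solutionclass} follows from the same $L^2$ estimate together with Gronwall. Continuous dependence is obtained by the Bona--Smith argument, using frequency-envelope control of the high-frequency tails.
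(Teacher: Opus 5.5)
Your outline has the right overall architecture. Your use of $y$-smoothing with $|v_y|\simeq N^{3/2}$, together with the $L^2_yL^\infty_{xT}$ maximal norm near $\Gamma$, also matches the paper. But Step 3, which carries the new content, is not actually supplied, and the fix you sketch targets the wrong difficulty.

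\textbf{Step 3: the nonlinear smoothing estimate.} The Kenig--Koenig pairing needs the smoothing norm of the \emph{nonlinear} solution. If you feed the linear estimate for $U(t)P_Nf$ through Duhamel with forcing in $L^1_TL^2$, the low--high term $u_{\ll N}\partial_xP_Nu$ costs a full derivative and wipes out the half-derivative gain. The estimate therefore has to be proved on the equation by a positive-commutator argument. Since $v_x$ changes sign and vanishes on $\Gamma$, this requires signed microlocal cutoffs $\chi_{\nu,N}(D)$.

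At the linear level nothing needs balancing. By the no-trapping bound $\max\{|v_x|,|v_y|\}\gtrsim|\zeta|$, a band of width $c_0N$ around $\Gamma$ already gives $|v_x|\ge c_0N$ off the band and $|v_y|\simeq N^{3/2}$ on it.

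The real obstruction is the chart commutator $[\chi_{\nu,N}^2(D),u_{\ll N}]\partial_xz_N$. In the transition band $\partial_\eta\chi_{\nu,N}\sim N^{-1/2}$ and $|\xi|\sim N$, so at low frequency $(\alpha,\beta)$ its symbol has size $N^{1/2}|\beta|$. That is a half-derivative loss relative to $\norm{\partial_yu}_{L^\infty}$. No choice of band width cures this: narrowing the band enlarges $\partial_\eta\chi_{\nu,N}$, and widening it beyond a small multiple of $N$ lets the $y$ chart reach $\eta=0$, where $v_y$ vanishes.

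The missing idea is the paper's split of the low frequency into two sectors:
\begin{itemize}
\item the tangential sector $|\beta|\lesssim N^{-1/2}|\alpha|$, controlled by $\partial_xu$ with a logarithmic loss that the choice $\sigma<s$ absorbs;
\item the normal sector, where the resonance satisfies $|\Omega|\gtrsim N^{3/2}|\beta|$.
\end{itemize}
On the normal sector, dividing by $\Omega$ gains a full power of $N$. This yields a cubic correction $\mathcal C_N$ to the localized weighted energy whose time derivative cancels the bad term. What remains are cross terms, closed with a finite ladder of enlarged cutoffs, and summable quartic remainders. A generic ``positive commutator to absorb the near-$\Gamma$ piece'' does not contain this normal-form step, and without it the smoothing estimate does not close.

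\textbf{Steps 2 and 4: the quantitative claims.} These are asserted rather than derived. Your maximal exponent $\gamma$ and time-scale exponent $\theta$ are never fixed, so nothing in your outline produces $19/16$. In the paper it arises as follows.
\begin{itemize}
\item BO--ZK Strichartz pairs $p>8/3$, $1/q+4/(3p)=1/2$ are proved blockwise from the product-frequency decay $|t|^{-1}L^{-1/2}$, $L=\lambda+\mu^2$. An isotropic $TT^*$ would lose $N^{1/(6p)}$.
\item These are converted to $L^2_TL^\infty$ by H\"older on intervals of length $N^{-1/2}$ and Bernstein. This costs $19/16+$ derivatives on $u$ and $11/16+$ on the forcing.
\item The forcing is bounded by pairing the maximal norm of $u_{\ll N}$ (cost $N^{1/2+}$, which is sharp) with the smoothing norm of $P_Nu$ (gain of half a derivative). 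This requires $s>11/16+1/2=19/16$.
\end{itemize}

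As for the lifespan, in this scheme the main terms carry no power of $T$. The smoothing bound is $S^2\lesssim E_s^2(1+\cdots)+E_s^3+\cdots$. The forcing in the refined Strichartz estimate is bounded in $L^2_T$ by $MS+E_sA$, and the $E_sA$ term can be absorbed only if $E_s$ is small. So a large-data inequality $f(T)\lesssim T^a(1+f(T))\norm{u}_{L^\infty_TH^s}$ does not follow from these ingredients, and attributing the exponent $8$ to powers of $T$ in Step 2 is unsupported.

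The paper instead closes a small-data bootstrap on $[0,1]$ and rescales by $u_\lambda=\lambda u(\lambda x,\lambda^{1/2}y,\lambda^2t)$, using $\norm{u_\lambda(0)}_{H^s}\le\lambda^{1/4}\norm{u_0}_{H^s}$ for $\lambda\le1$. The choice $\lambda^{1/4}R_0\simeq\delta_s$ gives $T=\lambda^2\simeq R_0^{-8}$.

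Your existence, uniqueness and Bona--Smith steps otherwise match the paper. One minor point: in Step 1 the Kato--Ponce commutator $[J^s,u]\partial_xu$ needs $\norm{\nabla u}_{L^\infty}$, not only $\norm{\partial_xu}_{L^\infty}$. This is harmless since you bootstrap $\nabla u$.
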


The exponent $19/16$ is produced by a refined Strichartz argument based on a frequency-dependent partition of the time interval, in the spirit of Kenig and of Linares--Pilod--Saut \cite{LinaresPilodSaut}.  The BO--ZK Strichartz pairs satisfy
\begin{equation}\label{eq:admissible-intro}
 p>\frac83,
 \qquad
 \frac1q+\frac{4}{3p}=\frac12.
\end{equation}
After localizing a frequency $N$ solution to time intervals of length $N^{-1/2}$, one obtains the exponents
\begin{equation}\label{eq:ab-intro}
 a(p)=2-\frac{13}{6p},
 \qquad
 b(p)=\frac32-\frac{13}{6p},
 \qquad a(p)-b(p)=\frac12.
\end{equation}
Letting $p\downarrow8/3$ gives
\[
 a(p)\downarrow\frac{19}{16},
 \qquad
 b(p)\downarrow\frac{11}{16}.
\]

\subsection*{Scaling, criticality, and the status of the threshold}

Next, we discuss some issues regarding the exponent $19/16$, since for \eqref{IVP} the scaling heuristic and the
genuine obstruction to low regularity are two different things. In fact, equation \eqref{IVP} admits the one-parameter symmetry
\begin{equation}\label{eq:scaling-intro}
 u_\lambda(x,y,t)=\lambda\,u\bigl(\lambda x,\lambda^{1/2}y,\lambda^2t\bigr),
 \qquad\lambda>0,
\end{equation}
and this is the only such scaling: writing
$u_\lambda=\lambda^{a}u(\lambda x,\lambda^{b}y,\lambda^{c}t)$ and using
that $\mathcal H_x$ is homogeneous of degree zero, the four terms of
\eqref{IVP} acquire the factors $\lambda^{a+c}$, $\lambda^{a+2}$,
$\lambda^{a+1+2b}$ and $\lambda^{2a+1}$, which are equal only for
$a=1$, $b=1/2$, $c=2$.  Equivalently, the phase in \eqref{eq:group} is
quasi-homogeneous of degree two,
$\omega(\lambda\xi,\lambda^{1/2}\eta)=\lambda^2\omega(\xi,\eta)$.

If $u_0\in\Sscr(\R^2)$ has $\widehat{u_0}$ supported away from
$\{\xi=0\}$, then $\widehat{u_{0,\lambda}}$ lives where
$|\xi|\simeq\lambda$ and $|\eta|\lesssim\lambda^{1/2}$, so that
$(\xi^2+\eta^2)^{s}\simeq\lambda^{2s}$ on its support; taking into
account the Jacobian $\lambda^{3/2}$ one obtains
\begin{equation}\label{eq:Hs-scaling-intro}
 \norm{u_{0,\lambda}}_{\dot H^{s}(\R^2)}
 \simeq\lambda^{\,s+\frac14}\norm{u_0}_{L^2(\R^2)},
 \qquad\lambda\to\infty .
\end{equation}
The scaling-critical index is therefore
\begin{equation}\label{eq:sc-intro}
 s_c=-\tfrac14 .
\end{equation}
On the family \eqref{eq:scaling-intro} one has $|\xi|\simeq\eta^2\simeq\lambda$,
so the weight $\langle|\xi|+\eta^2\rangle^{r}$ of the anisotropic space
$E^{r}$ yields the same exponent, and $E^{r}$ has the
same critical index $-1/4$.  The estimate
\eqref{eq:Hs-scaling-intro} is also the source of the lifespan
\eqref{eq:lifespan}: it is the homogeneous counterpart of the
inhomogeneous scaling bound proved in \cref{sec:proof-main}, where the
choice $\lambda^{1/4}R_0\simeq\delta_s$ gives
$\lambda\simeq R_0^{-4}$ and hence $T\simeq\lambda^2\simeq R_0^{-8}$.

Two consequences should be stated explicitly. First, the relation between the isotropic and anisotropic data classes
is one-sided.  For $r\ge0$ one has $E^{r}\hookrightarrow H^{r}$,
whereas in the converse direction only
$H^{s}\hookrightarrow E^{s/2}$ holds, because the weight
$\langle|\xi|+\eta^2\rangle^{r}$ requires $2r\le s$ along
$\{\xi=0\}$.  Consequently,
\[
 E^{5/4+}\hookrightarrow H^{5/4+}\subsetneq H^{19/16+},
\]
while $H^{19/16}$ embeds only into $E^{19/32}$ and
$19/32<5/4$.  Thus, at the level of admissible initial data,
\cref{thm:main} strictly enlarges the class covered by the
$E^{5/4+}$ theory of \cite{RibaudVento} at $\alpha=1$.  The two
theories nevertheless use different solution and uniqueness classes.

Second, and more importantly, scaling is not the operative obstruction
here.  Writing $\Omega(\theta,\zeta)=\omega(\theta)+\omega(\zeta)-\omega(\zeta+\theta)$
for $\theta=(\alpha,\beta)$ and $\zeta=(\xi,\eta)$, direct expansion
gives on $\{\alpha,\xi>0\}$
\begin{equation}\label{eq:Omega-intro}
 \Omega
 =2\alpha\xi-\xi(2\beta\eta+\beta^2)-\alpha(2\beta\eta+\eta^2),
\end{equation}
so that in the low--high regime $\alpha\ll\xi$ the leading behaviour is
$\Omega=\xi(2\alpha-2\beta\eta-\beta^2)+O(\alpha)$.  The bracket
vanishes along the curve $2\alpha=2\beta\eta+\beta^2$, which survives as
$\alpha\to0$: a nontrivial resonant set persists at arbitrarily small
longitudinal frequency.  This is the mechanism behind
the theorem of Esfahani and Pastor \cite{EsfahaniPastor}.  More
precisely, their counterexample rules out a contraction argument on
the unmodified Duhamel formula whenever that argument would produce a
$C^2$ flow map.  A well-posedness proof must therefore bypass this
direct Picard scheme, for example through compactness and energy
estimates, as here and in
\cite{CunhaPastorLow,KochTzvetkov,Nascimento2020}, short-time Fourier
restriction spaces as in \cite{IonescuKenigTataru,RibaudVento}, or a
gauge transformation.  In particular $19/16$ cannot be compared with a
``bilinear-estimate optimal'' index in the way that Kinoshita's exponent
$-1/4$ for the Zakharov--Kuznetsov equation \cite{Kinoshita2D} can.

Since the counterexample of \cite{EsfahaniPastor} obstructs smoothness
of the flow but not well-posedness in Kato's sense, it does not provide
a rigorous lower bound for the admissible Sobolev exponents.  Scaling
suggests $s_c=-1/4$ as the natural benchmark, but scaling alone does
not imply ill-posedness below that index.  If $s_{\mathrm{opt}}$
denotes the infimum of the admissible isotropic indices, the presently
justified summary is therefore
\begin{equation}\label{eq:bracket-intro}
 s_c=-\tfrac14\quad\text{(formal scaling benchmark)},
 \qquad
 s_{\mathrm{opt}}\le\tfrac{19}{16}.
\end{equation}
No norm-inflation or failure-of-continuity result is currently known
that supplies a matching lower bound.  The exponent $19/16$ is the
value produced by the particular optimization carried out in
\cref{sec:concluding}, where the two-parameter problem behind
\eqref{eq:ab-intro} is recorded; it is not asserted there, or anywhere
below, to be the limit of what arguments of this type can achieve.

\smallskip
The following maximal-function estimate is a principal new linear
ingredient of the paper.  Here $P_N$ denotes a smooth isotropic
Littlewood--Paley projection, so that its symbol is
$\varphi_1(\zeta/N)$ for a fixed $\varphi_1\in C_c^\infty(\R^2)$
supported in $\{1/2\le|\zeta|\le2\}$ and equal to one on
$\{3/4\le|\zeta|\le3/2\}$.

\begin{theorem}\label{thm:maximal}
There exist constants $C,C_0>0$, depending only on the fixed cutoff
functions, such that, for every dyadic
$N\ge2$, every $0<T\le1$, and every $f\in L^2(\R^2)$,
\begin{equation}\label{eq:dyadic-maximal}
 \norm{U(t)P_Nf}_{L_x^2L_{y,T}^\infty}
 +\norm{U(t)P_Nf}_{L_y^2L_{x,T}^\infty}
 \le C N^{1/2}(1+\log N)^{C_0}\norm{P_Nf}_{L^2}.
\end{equation}
One may take $C_0=1$.  Consequently, for every $\eps>0$, the factor on
the right-hand side may be replaced by $C_\eps N^{1/2+\eps}$.  The
power $1/2$ is optimal: no estimate with $N^{1/2-\delta}$, $\delta>0$,
can hold uniformly in $N$.  The theorem does not assert a
logarithm-free endpoint.
\end{theorem}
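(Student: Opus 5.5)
We argue by the standard $TT^*$ reduction, combined with a pointwise kernel estimate. Fix $N$ and a smooth cutoff $\tilde\varphi_1$ equal to one on $\supp\varphi_1$. The estimate $\norm{U(t)P_Nf}_{L_x^2L_{y,T}^\infty}\lesssim A_N\norm{f}_{L^2}$ is equivalent to
\[
 \Bigl\|\int_0^T U(t-t')\tilde P_N^2 G(\cdot,t')\,dt'\Bigr\|_{L_x^2L_{y,T}^\infty}\lesssim A_N^2\norm{G}_{L_x^2L^1_{y,T}} .
\]
Here $\tilde P_N$ has symbol $\tilde\varphi_1(\zeta/N)$. The operator on the left is convolution in $(x,y,t)$ with the kernel
\[
 K_N(x,y,t)=\int e^{i(x\xi+y\eta+t\omega(\xi,\eta))}\tilde\varphi_1^2(\zeta/N)\,d\xi\,d\eta .
\]
By Young's inequality in $x$ together with the trivial bound in $(y,t)$, it suffices to show
\[
 \int_{\R}\sup_{|y|\lesssim\cdot,\ |t|\le1}|K_N(x,y,t)|\,dx\lesssim N(1+\log N)^{2}.
\]
The $L_y^2L_{x,T}^\infty$ bound reduces in the same way to $\int\sup_{x,|t|\le1}|K_N|\,dy\lesssim N(1+\log N)^2$. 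This gives $A_N=N^{1/2}(1+\log N)$, that is, $C_0=1$. The $N^{1/2+\eps}$ form then follows trivially.

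The kernel estimate is the core of the proof, and I would split $\supp\tilde\varphi_1(\cdot/N)$ dyadically according to the size of the relevant group velocity. For the $x$-integral, localize $|v_x|=|\eta^2-2|\xi||\sim 2^{j}$, with $2^j\lesssim N^2$. Then do a stationary-phase/van der Corput analysis in $\xi$, integrating by parts when $|x|\gg 2^j$ (recall $|t|\le1$). On the piece $|v_x|\sim2^j$ this yields, for $|x|\lesssim 2^j$, a bound of the form $|K_{N,j}|\lesssim$ (area of the piece)$\times\min(1,(2^{j}/|x|)^{M})$-type decay beyond $|x|\sim 2^j$. Inside, we use the second derivative $\partial_\xi^2\omega=-2\sgn\xi$, which gives the decay $|t|^{-1/2}$ in the $\xi$-direction. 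Combined with the bound $\int_0^{2^j}\min(\cdot)\,dx$, each dyadic piece should contribute $O(N)$ uniformly in $j$. Summing over the $O(\log N)$ values of $j$ gives the first logarithm.

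The degenerate layer near $\Gamma$, where $2^j\lesssim1$, is handled by the transverse velocity $|v_y|\simeq N^{3/2}$ there. This is precisely the complementary mechanism noted after \eqref{eq:Gamma}: integrating by parts in $\eta$ kills the region, while $\sup_y$ is harmless for the $L_x^2$ norm. A second logarithm may come from the uniformity in $y$ via an $\eta$-dyadic split. This gives at most $(1+\log N)^2$ in $\norm{K}$, hence $C_0=1$ after the square root. The $y$-direction kernel bound is analogous: localize $|v_y|=2|\xi\eta|\sim 2^k$, and use $\partial_\eta^2\omega=2\xi$ for the curvature, now $\simeq N$ unless $|\xi|$ is small. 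For small $|\xi|$, with $|\eta|\sim N$, we have $|v_x|\sim N^2$, and integration by parts in $\xi$ handles that region.

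For optimality, I take $\widehat f$ to be a bump adapted to a box of size $N\times 1$ around $(N,\eta_0)$ chosen so that $v_y$ stays bounded. Over $|t|\le1$ the solution then translates in $x$ with speed $\simeq N$ while staying coherent, since the curvature in $\xi$ is $O(1)$ and the width in $\xi$ is only of order one after a refinement to a unit box. Hence $\sup_{t}|U(t)f(x,y)|\gtrsim \norm{f}_{L^\infty}$ on an $x$-set of length $\sim N$. This gives a ratio $\gtrsim N^{1/2}$, ruling out $N^{1/2-\delta}$.

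The main obstacle is the uniform-in-$j$ bound $O(N)$ for each velocity layer. The curvature of $\omega$ in $\xi$ is only $O(1)$ while the layers have area up to $N^2$. I would first try to handle this by further localizing $\eta$ in unit intervals and using the almost-orthogonality in $x$ coming from disjoint velocity ranges, rather than a crude $L^1$ kernel bound.
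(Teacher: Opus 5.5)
You reduce the estimate via $TT^*$ to $\int_\R\sup_{y,\,|t|\le1}|K_N(x,y,t)|\,dx\lesssim N(1+\log N)^2$, plus its $y$-analogue. That reduction is sound and the bound is true, but you do not prove it, and your sketched route does not reach it.

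\textbf{The kernel estimate.} The only curvature you invoke is $\partial_\xi^2\omega=-2\sgn\xi$, which gives $|t|^{-1/2}$. As you note, this cannot offset layers of area up to $N^2$. Slicing by $|v_x|\sim2^j$ also organizes the estimate around the wrong quantity. What controls the kernel is the curvature $|\xi|+\eta^2$, not $|v_x|$; small $v_x$ only confines the kernel to small $|x|$.

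The missing idea is the two-dimensional dispersive bound $|K_{\lambda,\mu}|\lesssim\min\{\lambda\mu,(|t|L^{1/2})^{-1}\}$, $L=\lambda+\mu^2$, on product blocks $|\xi|\simeq\lambda$, $|\eta|\simeq\mu$. Stationary phase in $\eta$ (curvature $2t\xi$) gives $(|t|\lambda)^{-1/2}$. On $\xi>0$ the reduced phase $x\xi-t\xi^2-y^2/(4t\xi)$ has second derivative $-2t-y^2/(2t\xi^3)$. Its two terms have the same sign, and $|y|\simeq|t|\lambda\mu$ on the stationary set, so it has size $\gtrsim|t|L/\lambda$. This is how the $\eta^2$ part of $\abs{\det D^2\omega}=4(|\xi|+\eta^2)$ enters.

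The dyadic split in $|\xi|$ is forced by the degeneration of $\partial_\eta^2\omega=2\xi$ at the kink. The block $\lambda=1$ needs a separate two-dimensional stationary-phase argument with a $C^{1,1}$ amplitude.

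Given this bound, $\sup_{y,t}|K_{\lambda,\mu}|$ is
\begin{itemize}
\item $\lesssim\lambda\mu$ for $|x|\le L^{1/2}/(\lambda\mu)$;
\item $\lesssim L^{1/2}/|x|$ for $|x|\lesssim L$, integrating by parts in $\xi$ when $|t|L\ll|x|$ and using $|v_x|\lesssim L$;
\item rapidly decreasing beyond.
\end{itemize}
So its $L^1_x$ norm is $\lesssim L^{1/2}\bigl(1+\log(2+\lambda\mu L^{1/2})\bigr)\lesssim N(1+\log N)$. The $y$-majorant is $\lesssim\lambda(1+\log N)$ in the same way, and summing over the $O(\log N)$ blocks gives $C_0=1$.

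\textbf{Three further steps fail as written.}

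First, you dispose of degenerate regions by integrating by parts in $\eta$ under $\sup_y$ (near $\Gamma$), and in $\xi$ under $\sup_x$ (small $|\xi|$ in the $y$-majorant). This is unavailable: the supremum may select $y=-tv_y$, respectively $x=-tv_x$, where that phase is stationary. Small $|\xi|$ is harmless for the $y$-majorant only because that bound is $\lambda$ up to logarithms.

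Second, your fallback of unit $\eta$-intervals plus almost orthogonality in $x$ offers no substitute. The $L^\infty_{y,T}$ component destroys orthogonality, and the triangle inequality over $\sim N$ pieces loses a power of $N$.

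Third, the optimality example is wrong. At $\xi\simeq N$ a box of $\eta$-width one cannot keep $v_y=2\xi\eta$ bounded, since $\partial_\eta^2\omega=2\xi\simeq2N$. The packet spreads in $y$ on the time scale $N^{-1}$, and $\sup_{y,t}|U(t)f|$ decays like $(1+|x|)^{-1/2}$ along its path. The ratio is then only $(\log N)^{1/2}$; shrinking the $\eta$-width to $N^{-1/2}$ restores coherence but yields only $N^{1/4}$. The sharpness is static instead: $f_N(x,y)=N\psi(Nx,Ny)$, with $\widehat\psi$ supported where $\varphi_1=1$, gives $\norm{f_N}_{L_x^2L_y^\infty}\simeq N^{1/2}\norm{f_N}_{L^2}$ already at $t=0$, with no dispersion involved.
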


Two comments on the content of \cref{thm:maximal} are in order. First, the optimality assertion should be read in the strong sense
familiar from the maximal function estimates of Kenig and Ziesler
\cite{KenigZiesler}.  The power $N^{1/2}$ is exactly the cost of the
one-variable Sobolev embedding $L^2\to L_x^2L_y^\infty$ on a frequency
block of size $N$, and it is already forced at $t=0$; see
\cref{rem:sharpness}.  Thus \eqref{eq:dyadic-maximal}
asserts that taking the supremum over the \emph{whole} time interval,
in addition to the supremum over one spatial variable, costs no power
of $N$ beyond that static embedding.  It is in this sense, and not in
the sense of a gain over Sobolev, that the estimate is sharp.  The
mechanism is the product-frequency dispersive decay proved below,
whose strength is governed by the nonuniform curvature of the phase:
the two mixed norms are controlled by
$L^{1/4}$ and $\lambda^{1/2}$, respectively, and both of these are
$O(N^{1/2})$ on an isotropic block, with equality attained on
different product blocks.  Both estimates are used because they pair
with different signed smoothing norms: the $L_x^2L_{y,T}^\infty$
estimate pairs with smoothing in $L_x^\infty L_{y,T}^2$, whereas the
$L_y^2L_{x,T}^\infty$ estimate pairs with smoothing in
$L_y^\infty L_{x,T}^2$.

Second, the significance of \cref{thm:maximal} is not merely linear.
Its $N^{1/2+}$ cost is paired with the half-derivative gain of the
signed microlocal smoothing estimate.  In the nonlinear estimate this
permits the derivative falling on $u\partial_xu$ to be distributed
between the mixed maximal and smoothing factors.  The refined
Strichartz estimate then requires $11/16+$ derivatives on the forcing,
and the additional half derivative in the product estimate gives
precisely
\[
 \frac{11}{16}+\frac12=\frac{19}{16}.
\]
This is the mechanism by which the new maximal estimate lowers the
previous isotropic threshold.

The proof follows the $TT^*$ architecture of Kenig and Ziesler
\cite{KenigZiesler}.  On each open half-plane $\xi>0$ and $\xi<0$,
\begin{equation}\label{eq:hessian-intro}
 \abs{\det D^2\omega(\xi,\eta)}=4(|\xi|+\eta^2).
\end{equation}
The determinant does not vanish away from the origin on either open
half-plane, but its size is not uniform on an isotropic annulus: it is
of order $N^2$ in the strongly transverse region and only of order
$N$ in the parabolic region $|\xi|\simeq N$, $|\eta|\lesssim
N^{1/2}$.  This loss of one curvature power is reflected in the
quantity $L=\lambda+\mu^2$ in the product-frequency kernel estimates
and ultimately in the exponent $1/2$ of the maximal estimate.

The second new component is a continuous microlocal smoothing estimate.  We construct four signed charts.  The $x$ charts cover the region in which $|v_x|$ is elliptic.  The $y$ charts are confined to a band around \eqref{eq:Gamma}, where
\[
 |\xi|\simeq N,
 \qquad |\eta|\simeq N^{1/2},
 \qquad |v_y|\simeq N^{3/2}.
\]
Frequency-adapted weights convert the positive commutator into the norms
\[
 L_x^\infty L^2_{y,t}
 \quad\text{and}\quad
 L_y^\infty L^2_{x,t},
\]
which pair exactly with the two mixed maximal norms in \eqref{eq:dyadic-maximal}.  The only nonstandard chart commutator is supported in the transition band.  Its low frequency $\theta=(\alpha,\beta)$ is split into the tangential region
\[
 |\beta|\lesssim N^{-1/2}|\alpha|
\]
and the normal region.  The tangential contribution is controlled by $\partial_xu$.  On the normal region, the resonance
\[
 \Omega(\theta,\zeta)
 =\omega(\theta)+\omega(\zeta)-\omega(\zeta+\theta)
\]
satisfies
\begin{equation}\label{eq:res-lb-intro}
 |\Omega(\theta,\zeta)|\gtrsim N^{3/2}|\beta|,
\end{equation}
whereas the commutator costs only $N^{1/2}|\beta|$.  Division by the resonance therefore gains one full power of $N$.  The resulting cubic modified local energy has a quartic remainder.  A Fourier-series expansion in the smooth high variable proves the quartic estimate without invoking a flag-paraproduct theorem.

The paper is organized as follows.  In \cref{sec:prelim} we set the
notation and record the energy and product estimates.  The
product-frequency dispersive, Strichartz, and maximal-function
arguments are given in \cref{sec:maximal}.  The refined Strichartz estimate is proved in \cref{sec:refined}.  The frequency-adapted microlocal smoothing estimate and the transition-band normal form occupy \cref{sec:smoothing}.  The coupled nonlinear maximal, smoothing, and energy inequalities are closed in \cref{sec:apriori}.  Finally, \cref{sec:proof-main} contains existence, uniqueness, and continuous dependence.

\section{Preliminaries}\label{sec:prelim}

\subsection{Basic notation}

We write $D_x=-i\partial_x$, $D_y=-i\partial_y$, $D=(-\Delta)^{1/2}$, and $J=(1-\Delta)^{1/2}$.  Mixed norms are ordered from left to right; for example,
\[
 \norm f_{L_x^2L_{y,T}^\infty}
 =\left(\int_\R\sup_{(y,t)\in\R\times[0,T]}|f(x,y,t)|^2\dd x\right)^{1/2}.
\]
Let $P_N$ be a smooth isotropic projection to $|\zeta|\simeq N$, $\zeta=(\xi,\eta)$, with $P_1$ denoting the low-frequency projection.  We also use product projections $P^x_\lambda P^y_\mu$, where $\lambda,\mu\ge1$ are dyadic and the value $1$ includes the corresponding low coordinate.  The notation $A\lesssim B$ allows a constant independent of the dyadic parameters and of $T\le1$.

The linear equation is
\begin{equation}\label{eq:linear}
 w_t-i\omega(D)w=F,
\end{equation}
where $U(t)=e^{it\omega(D)}$.

\subsection{Pseudodifferential operators}\label{subsec:op-notation}

We use the Kohn--Nirenberg quantization and the associated symbolic
calculus; standard references are
\cite{KohnNirenberg,HormanderIII,TaylorPDO}.  If
$a=a(x,y,\xi,\eta)$ is a smooth symbol and $z=(x,y)$,
$\zeta=(\xi,\eta)$, then
\begin{equation}\label{eq:Op-definition}
 \Op(a)f(z)
 =\frac1{(2\pi)^2}
 \int_{\R^2}e^{iz\cdot\zeta}
 a(z,\zeta)\widehat f(\zeta)\dd\zeta.
\end{equation}
When $a$ depends only on $\zeta$, we also write $a(D)$; this is the
Fourier multiplier with symbol $a$.  Multiplication by a function
$h=h(z)$ is denoted by $M_h$.  Thus $D_x$, $D_y$, $J^s$, the
Littlewood--Paley projections, and all chart localizations are special
cases of \eqref{eq:Op-definition}.  A real frequency multiplier is
self-adjoint on $L^2$.

Only finite symbolic expansions are needed.  All symbols are localized
to a specified frequency block, and their orders are understood after
rescaling that block to unit size.  If $a$ and $b$ have orders $m$ and
$m'$, respectively, and satisfy the derivative bounds displayed at the
point of use, then
\begin{align}
 \Op(a)\Op(b)
 &=\Op(ab)+\frac1i\Op(\nabla_\zeta a\cdot\nabla_zb)
   +\Op(r_{m+m'-2}),\label{eq:Op-composition}\\
 \Op(a)^*&=\Op(\overline a)+\Op(r_{m-1}),
 \label{eq:Op-adjoint}\\
 i[\Op(a),\Op(b)]
 &=\Op(\{a,b\})+\Op(r_{m+m'-2}),
 \label{eq:Op-commutator}
\end{align}
where
\[
 \{a,b\}
 =\nabla_\zeta a\cdot\nabla_zb
  -\nabla_za\cdot\nabla_\zeta b
\]
is the Poisson bracket.  The remainders in
\eqref{eq:Op-composition}-\eqref{eq:Op-commutator} are localized to
the same enlarged block and are controlled directly by finitely many
rescaled symbol seminorms.  The $L^2$ boundedness of the order-zero
operators used below is a blockwise form of the
Calder\'on--Vaillancourt theorem \cite{CalderonVaillancourt}.  On a
parabolic BO--ZK block, one $\xi$ derivative gains $N^{-1}$ and one
$\eta$ derivative gains $N^{-1/2}$.  These are precisely the scales
used in the commutator expansions of \cref{sec:smoothing}.  Moreover,
a symbol compactly supported in frequency and satisfying the stated
rescaled derivative bounds has a kernel dominated by an integrable
anisotropic majorant at the reciprocal frequency scales.  The
corresponding operator is therefore bounded on every mixed norm used
in the paper.  This blockwise formulation avoids introducing global
symbol-class notation that is not otherwise needed.

\subsection{Phase geometry}

\begin{lemma}\label{lem:no-trapping}
There exists $c>0$ such that, for $|\zeta|\ge2$,
\begin{equation}\label{eq:no-trapping}
 \max\{|v_x(\zeta)|,|v_y(\zeta)|\}\ge c|\zeta|.
\end{equation}
\end{lemma}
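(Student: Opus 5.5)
Since both sides of \eqref{eq:no-trapping} scale differently under the quasi-homogeneity \eqref{eq:scaling-intro}, I will not rescale. Instead I split the region $|\zeta|\ge2$ according to whether we are near the characteristic parabola \eqref{eq:Gamma} or away from it, and check each piece by hand using the explicit formulas \eqref{eq:velocities}. Write $\zeta=(\xi,\eta)$. By the symmetries $\xi\mapsto-\xi$ and $\eta\mapsto-\eta$, which leave $|v_x|$ and $|v_y|$ invariant, it suffices to take $\xi\ge0$ and $\eta\ge0$.

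\emph{Case 1: $\eta^2\ge 4\xi$ or $\eta^2\le\xi$ (away from $\Gamma$).}
\begin{itemize}
\item If $\eta^2\ge4\xi$, then $|v_x|=\eta^2-2\xi\ge\eta^2/2$.
\item If $\eta^2\le\xi$, then $|v_x|=2\xi-\eta^2\ge\xi$.
\end{itemize}
In the second subcase, $\eta\le\xi^{1/2}$ gives $|\zeta|\le\xi+\xi^{1/2}$. Hence $|\zeta|\lesssim\xi$ when $\xi\ge1$, while $|\zeta|\le2$ when $\xi<1$, which is excluded. So $|v_x|\gtrsim|\zeta|$ there.

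In the first subcase, $|v_x|\ge\eta^2/2$. If $\eta\ge\xi$, then $|\zeta|\le2\eta\le\eta^2$ because $\eta\ge1$: indeed $|\zeta|\ge2$ forces $\eta\ge1$. If instead $\xi>\eta$, then $\eta^2\ge4\xi>4\eta$ gives $\eta>4$, and $\eta^2\ge4\xi\ge4\xi^{1/2}\cdot\xi^{1/2}$; since $\xi\le\eta^2/4$, we get $|\zeta|\le2\xi\le\eta^2/2$. Either way $|v_x|\gtrsim|\zeta|$.

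\emph{Case 2: $\xi<\eta^2<4\xi$ (the band around $\Gamma$).} Here $v_x$ may vanish, so I use $v_y=2\xi\eta$. We have $\eta>\xi^{1/2}$ and $\eta<2\xi^{1/2}$, so $|v_y|\ge2\xi^{3/2}$. Also $|\zeta|\le\xi+2\xi^{1/2}$.

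If $\xi\ge1$, then $|\zeta|\le3\xi\le3\xi^{3/2}$, hence $|v_y|\ge\tfrac23|\zeta|$.

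If $\xi<1$, then $\eta<2$ and $|\zeta|<3$. Since $|\zeta|\ge2$ and $\eta<2\xi^{1/2}$, a small $\xi$ is impossible: $|\zeta|^2\le\xi^2+4\xi$ forces $\xi\ge c_0>0$. So $|v_y|\ge2c_0^{3/2}\gtrsim|\zeta|$ on this compact piece.

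Taking $c$ as the minimum of the constants from these finitely many regions gives \eqref{eq:no-trapping}. The only mildly delicate step is the bounded-frequency part of the band in Case~2, where linear-in-$|\zeta|$ growth must be replaced by compactness and the lower bound on $\xi$. That part is handled by the hypothesis $|\zeta|\ge2$, which is exactly why this hypothesis appears.
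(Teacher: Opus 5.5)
Your argument is correct, but it organizes the case analysis differently from the paper, and it contains one small slip that needs a one-line repair.

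\textbf{Comparison with the paper.} The paper splits first by which coordinate dominates.
\begin{itemize}
\item If $|\eta|\ge|\zeta|/2$, it uses $|v_x|\ge\eta^2/2$ when $|\xi|\le\eta^2/4$, and $|v_y|\gtrsim|\eta|^3$ otherwise.
\item If $|\eta|<|\zeta|/2$, it uses $|\xi|\simeq|\zeta|$, together with $|v_x|\ge|\xi|$ when $\eta^2\le|\xi|$ and $|v_y|\gtrsim|\xi|^{3/2}$ otherwise.
\end{itemize}
Since $|\zeta|\ge2$ forces the dominant coordinate to be at least $1$, each of these lower bounds is directly $\gtrsim|\zeta|$. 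So no bounded-frequency subcase ever appears.

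You split first relative to the parabola \eqref{eq:Gamma}: the two $v_x$-elliptic regions $\eta^2\ge4\xi$ and $\eta^2\le\xi$, and the band $\xi<\eta^2<4\xi$. This mirrors the chart geometry \eqref{eq:xcharts}--\eqref{eq:ygeometry} and makes visible the band bound $|v_y|\ge2\xi^{3/2}$ that the $y$ charts rely on. The cost is extra comparisons of $\xi$ with $\eta$ and with $1$. In particular, you must treat the low-$\xi$ part of the band separately, which you do correctly: $\xi^2+4\xi\ge4$ gives $\xi\ge2\sqrt2-2$. Both routes are equally elementary. The paper's is shorter; yours gives explicit constants down to $|\zeta|=2$.

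\textbf{The slip.} In the subcase $\eta^2\ge4\xi$, $\eta\ge\xi$, you claim $2\eta\le\eta^2$ because $\eta\ge1$. This fails for $\sqrt2\le\eta<2$; for example, $\xi=1/2$, $\eta=1.95$ lies in that subcase with $|\zeta|\ge2$, yet $2\eta>\eta^2$. The conclusion survives with either fix:
\begin{itemize}
\item use $|\zeta|\le\sqrt2\,\eta\le\eta^2$, which is valid because $\xi\le\eta$ and $|\zeta|\ge2$ force $\eta\ge\sqrt2$; or
\item write $|\zeta|\le2\eta\le2\eta^2$, which only changes the constant.
\end{itemize}
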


\begin{proof}
If $|\eta|\ge|\zeta|/2$ and $|\xi|\le\eta^2/4$, then $|v_x|\ge\eta^2/2$.  If $|\eta|\ge|\zeta|/2$ and $|\xi|>\eta^2/4$, then $|v_y|=2|\xi\eta|\gtrsim|\eta|^3$.  If $|\eta|<|\zeta|/2$, then $|\xi|\simeq|\zeta|$.  When $\eta^2\le|\xi|$, one has $|v_x|\ge|\xi|$; otherwise $|v_y|\gtrsim|\xi|^{3/2}$.  Each lower bound dominates $c|\zeta|$ at high frequency.
\end{proof}

Fix $0<c_0\ll1$.  On $|\zeta|\simeq N$, choose a partition
\begin{equation}\label{eq:chart-partition}
 1=\chi_{x+}^2+\chi_{x-}^2+\chi_{y+}^2+\chi_{y-}^2
\end{equation}
with
\begin{align}
 \supp\chi_{x\pm}&\subset\{\pm v_x\ge c_0N\},\label{eq:xcharts}\\
 \supp\chi_{y\sigma}&\subset
 \{ |\eta^2-2|\xi||\le4c_0N,\ \sigma\xi\eta>0\}.
 \label{eq:ycharts}
\end{align}
On the $y$ charts,
\begin{equation}\label{eq:ygeometry}
 |\xi|\simeq N,
 \qquad |\eta|\simeq N^{1/2},
 \qquad |v_y|\simeq N^{3/2}.
\end{equation}
The transition cutoffs obey
\begin{equation}\label{eq:chart-symbol}
 |\partial_\xi^a\partial_\eta^b\chi_\nu(\xi,\eta)|
 \le C_{a,b}N^{-a-b/2}
\end{equation}
whenever their derivatives are supported in the parabolic band.  Away from the band, only the ordinary isotropic Littlewood--Paley derivatives occur.

\subsection{Energy and fractional product estimates}

\begin{lemma}\label{lem:energy}
Let $s>1$ and let $u$ be a smooth solution of \eqref{IVP} on $[0,T]$.  Then
\begin{equation}\label{eq:energy}
 \norm{u}_{L_T^\infty H^s}
 \le C\norm{u_0}_{H^s}
 \exp\left(C\int_0^T\norm{\nabla u(t)}_{L^\infty}\dd t\right).
\end{equation}
\end{lemma}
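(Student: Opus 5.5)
The plan is the standard Kato--Ponce energy argument.

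\textbf{Step 1: energy identity.} Apply $J^s$ to \eqref{IVP}, pair with $J^su$ in $L^2$, and integrate in space. The dispersive part $\mathcal H_x\partial_x^2+\partial_x\partial_y^2$ equals $-i\omega(D)$ with $\omega$ real. It therefore commutes with $J^s$ and is skew-adjoint, so it drops out. This leaves
\[
 \frac12\frac{d}{dt}\norm{J^su}_{L^2}^2=-\ip{J^s(u\partial_xu)}{J^su}.
\]

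\textbf{Step 2: commutator split.} Write
\[
 J^s(u\partial_xu)=[J^s,u]\partial_xu+u\,\partial_xJ^su.
\]
For the second term, integrate by parts in $x$:
\[
 \ip{u\,\partial_xJ^su}{J^su}=-\tfrac12\int\partial_xu\,(J^su)^2,
\]
which is bounded by $\tfrac12\norm{\partial_xu}_{L^\infty}\norm{J^su}_{L^2}^2$.

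\textbf{Step 3: the commutator.} This is the only step requiring a genuine tool: the Kato--Ponce commutator estimate, in the form valid for $s>0$ (one could also argue via a paraproduct decomposition):
\[
 \norm{[J^s,f]g}_{L^2}\lesssim\norm{\nabla f}_{L^\infty}\norm{J^{s-1}g}_{L^2}+\norm{J^sf}_{L^2}\norm{g}_{L^\infty}.
\]
Apply it with $f=u$ and $g=\partial_xu$. The first term is bounded by $\norm{\nabla u}_{L^\infty}\norm{u}_{H^s}$. The second is $\norm{u}_{H^s}\norm{\partial_xu}_{L^\infty}$. Both are therefore $\lesssim\norm{\nabla u}_{L^\infty}\norm{u}_{H^s}$. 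The hypothesis $s>1$ only guarantees that, for smooth solutions, all quantities are finite and the integrations by parts are justified.

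\textbf{Step 4: Gronwall.} Combining the previous steps gives
\[
 \frac{d}{dt}\norm{u}_{H^s}^2\le C\norm{\nabla u}_{L^\infty}\norm{u}_{H^s}^2.
\]
Gronwall's inequality then yields \eqref{eq:energy}, after taking square roots and absorbing the factor $1/2$ into $C$.

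The main obstacle is only quoting the correct form of the commutator estimate. No dispersive input is needed; the equation's anisotropy is irrelevant here because the linear part is skew-adjoint.
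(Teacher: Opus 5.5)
Your proposal is correct and follows the paper's proof: apply $J^s$, discard the dispersive term by skew-adjointness, bound $J^s(uu_x)$ with the Kato--Ponce commutator estimate, and close with Gronwall. You also fill in details the paper leaves implicit, namely the integration by parts of $u\,\partial_xJ^su$ and the explicit commutator bound. Your aside on $s>1$ is not needed, since for smooth solutions the argument works for any $s>0$, but this does not affect the proof.
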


\begin{proof}
Apply $J^s$ to \eqref{IVP}, pair with $J^su$, and use the skew-adjointness of the linear operator.  The Kato--Ponce commutator estimate
\cite{KatoPonce,GrafakosOh} gives
\[
 \frac{\dd}{\dd t}\norm{J^su}_2^2
 \lesssim \norm{\nabla u}_\infty\norm{J^su}_2^2.
\]
Gronwall's inequality proves \eqref{eq:energy}.
\end{proof}

We use repeatedly the following consequence of the fractional Leibniz rule and Bony's decomposition.

\begin{lemma}\label{lem:paraproduct}
Let $0<\gamma<1$ and $s>\gamma$.  Then
\begin{equation}\label{eq:paraproduct}
 \norm{J^\gamma(uu_x)-uJ^\gamma u_x}_{L^2}
 \lesssim \norm{\nabla u}_{L^\infty}\norm{u}_{H^s}.
\end{equation}
Moreover, after extracting the low--high term $u_{<N/8}P_NJ^\gamma u_x$, all comparable-frequency terms satisfy the same bound after square summation in $N$.
\end{lemma}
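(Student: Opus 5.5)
We prove \eqref{eq:paraproduct} with Bony's decomposition, writing
\[
 uu_x=T_uu_x+T_{u_x}u+R(u,u_x),
\]
where $T_fg=\sum_N f_{<N/8}P_Ng$ and $R$ collects the comparable-frequency interactions. The plan is to isolate the only term in which all $\gamma+1$ derivatives fall on one factor without a compensating low-frequency derivative. That term is $J^\gamma T_uu_x$, which we compare with $uJ^\gamma u_x$. Every other contribution will be bounded by placing $\nabla u$ in $L^\infty$ and at most $\gamma<s$ derivatives of $u$ in $L^2$.

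\emph{Step 1: the low--high commutator.} We write
\[
 J^\gamma T_uu_x-uJ^\gamma u_x
 =\sum_N[J^\gamma,u_{<N/8}]P_Nu_x-\sum_N(u-u_{<N/8})J^\gamma P_Nu_x .
\]
For the first sum, $P_Nu_x$ is localized at frequency $N$ and $u_{<N/8}$ at lower frequency. The commutator kernel expansion (the Coifman--Meyer/Kato--Ponce mechanism) then gives
\[
 \norm{[J^\gamma,u_{<N/8}]P_Nu_x}_{L^2}\lesssim\norm{\nabla u}_{L^\infty}N^{\gamma-1}\norm{P_Nu_x}_{L^2}\lesssim\norm{\nabla u}_{L^\infty}N^{\gamma}\norm{P_Nu}_{L^2}.
\]
These pieces are frequency localized to $|\zeta|\simeq N$, so they are almost orthogonal and square sum to $\norm{\nabla u}_{L^\infty}\norm{u}_{H^\gamma}$. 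For the second sum, we decompose $u-u_{<N/8}=\sum_{M\gtrsim N}P_Mu$ and use $\norm{P_Mu}_{L^\infty}\lesssim M^{-1}\norm{\nabla u}_{L^\infty}$. This gives the bound
\[
 \sum_{M\gtrsim N}M^{-1}N^{1+\gamma}\norm{\nabla u}_{L^\infty}\norm{P_Nu}_{L^2}.
\]
Reorganizing by the output frequency (which is $\lesssim M$) and applying Schur's test in $(M,N)$ with the decay $(N/M)^{1}$ yields $\norm{\nabla u}_{L^\infty}\norm u_{H^\gamma}$.

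\emph{Step 2: high--low and comparable terms.} For $J^\gamma T_{u_x}u$ we have
\[
 \norm{J^\gamma\bigl((u_x)_{<N/8}P_Nu\bigr)}_{L^2}\lesssim N^\gamma\norm{\nabla u}_{L^\infty}\norm{P_Nu}_{L^2},
\]
and these again square sum. For $J^\gamma R(u,u_x)$, we write the $N$-th comparable block as $P_{\sim N}u\,P_Nu_x$. Its output lies at frequency $\lesssim N$, so
\[
 \norm{J^\gamma(P_{\sim N}u\,P_Nu_x)}_{L^2}\lesssim N^\gamma\norm{P_Nu_x}_{L^\infty}\norm{P_{\sim N}u}_{L^2}\lesssim N^\gamma\norm{\nabla u}_{L^\infty}\norm{P_{\sim N}u}_{L^2}.
\]
The outputs are not orthogonal, since they sit at frequencies $\le N$. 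Here we use $s>\gamma$: the bound $\sum_N N^{\gamma-s}\,N^s\norm{P_{\sim N}u}_2\lesssim\norm u_{H^s}$ follows by Cauchy--Schwarz, because $\gamma<s$ makes $\sum N^{2(\gamma-s)}$ converge. This also gives the square-summability claim in the last sentence of the lemma, since each comparable or high--low block is bounded by $N^{\gamma}\norm{\nabla u}_\infty\norm{P_{\sim N}u}_2$. Finally, we add back $uJ^\gamma u_x$, which yields \eqref{eq:paraproduct}. The statement "after extracting $u_{<N/8}P_NJ^\gamma u_x$" is exactly Step 1 read blockwise.

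\emph{Main obstacle.} We expect the only delicate point to be Step 1: the commutator must gain a full derivative on the low-frequency factor, through the first-order Taylor expansion of the symbol $\langle\zeta\rangle^\gamma$ around the high frequency. This requires $\gamma<1$ so that no higher-order term is needed, and it requires treating the low-frequency block $P_1$ separately, which is harmless. The remaining steps are standard Littlewood--Paley bookkeeping.
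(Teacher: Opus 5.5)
Your argument is correct, but it takes a more self-contained route than the paper. The paper does not run a Littlewood--Paley proof of \eqref{eq:paraproduct}. It quotes the Kato--Ponce commutator estimate, which gives the stronger bound $\norm{\nabla u}_{L^\infty}\norm{u}_{H^\gamma}$ with no loss. For the second statement it invokes Coifman--Meyer bounds and inverse Bernstein on the balanced and high--high paraproducts, followed by square summation.

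You instead reprove the commutator estimate by hand from Bony's decomposition. You use the first-order symbol expansion for $[J^\gamma,u_{<N/8}]P_Nu_x$ and inverse Bernstein on the high factor. For the non-orthogonal comparable blocks, you use Cauchy--Schwarz over dyadic scales with the slack $s>\gamma$ in place of Coifman--Meyer. This gives an elementary proof at the cost of $H^s$ instead of $H^\gamma$ on the right, which is exactly what the lemma allows.

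One step should be tightened. In the second sum of Step 1, Schur's test with the factor $N/M$ only works for $M\gg N$, where the output sits at $|\zeta|\simeq M$ and the pieces are almost orthogonal. For $M\simeq N$ the output is spread over $|\zeta|\lesssim N$ and there is no decaying factor, so your argument does not justify the $H^\gamma$ claim for these diagonal terms. Handle them exactly as you do in Step 2, via $\sum_N N^\gamma\norm{P_{\sim N}u}_2\lesssim_{s-\gamma}\norm{u}_{H^s}$.

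Also, the restriction $\gamma<1$ is not what makes the low--high commutator work. The integral form of the mean value theorem gives the gain of one derivative on $u_{<N/8}$ for every $\gamma>0$, with no higher-order remainder.
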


\begin{proof}
The first statement follows from the Kato--Ponce commutator estimate
\cite{KatoPonce,GrafakosOh}.  For the balanced and high--high
paraproducts, Coifman--Meyer bounds and inverse Bernstein place one
full spatial derivative on the low or comparable-frequency factor;
this requires $\norm{\nabla u}_{L^\infty}$, rather than only
$\norm{u_x}_{L^\infty}$.  Littlewood--Paley square summation then
proves the second statement.
\end{proof}

\section{Oscillatory kernels and maximal functions}\label{sec:maximal}

\subsection{Product-frequency kernels}

Let $\varphi\in C_c^\infty((1/2,2))$ and let $a_{\lambda,\mu}$ be a smooth symbol supported where $|\xi|\simeq\lambda$ and $|\eta|\simeq\mu$, with the standard modifications when $\lambda=1$ or $\mu=1$.  Set
\begin{equation}\label{eq:kernel}
 K_{\lambda,\mu}(x,y,t)
 =\iint e^{i(x\xi+y\eta+t\omega(\xi,\eta))}
 a_{\lambda,\mu}(\xi,\eta)\dd\xi\dd\eta
\end{equation}
and
\begin{equation}\label{eq:L}
 L=\lambda+\mu^2.
\end{equation}

\begin{proposition}\label{prop:kernel}
Uniformly for $|t|\le1$,
\begin{align}
 \int_\R\sup_{y,|t|\le1}|K_{\lambda,\mu}(x,y,t)|\dd x
 &\lesssim
 L^{1/2}\bigl[1+\log(2+\lambda\mu L^{1/2})\bigr],
 \label{eq:kernel-x}\\
 \int_\R\sup_{x,|t|\le1}|K_{\lambda,\mu}(x,y,t)|\dd y
 &\lesssim
 \lambda\bigl[1+\log(2+\lambda\mu^2)\bigr].
 \label{eq:kernel-y}
\end{align}
\end{proposition}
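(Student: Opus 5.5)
The plan is to prove both bounds by decomposing the kernel in the physical variable that is integrated. On each piece we use either the trivial bound $|K|\le\|a\|_{L^1}\simeq\lambda\mu$ or stationary/nonstationary phase in the frequency variables. Throughout, assume $\xi>0$ on the support; the case $\xi<0$ is symmetric. Then
\[
\omega=\xi\eta^2-\xi^2,\qquad \partial_\xi\omega=\eta^2-2\xi,\qquad \partial_\eta\omega=2\xi\eta,
\]
\[
\partial_\xi^2\omega=-2,\qquad \partial_\eta^2\omega=2\xi\simeq\lambda,\qquad \partial_\xi\partial_\eta\omega=2\eta\simeq\mu .
\]
On the support and for $|t|\le1$, the group velocity satisfies $|\partial_\xi\omega|\lesssim L$ and $|\partial_\eta\omega|\lesssim\lambda\mu$.

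\textbf{Far region for \eqref{eq:kernel-x}.} For $|x|\ge CL$ the $\xi$-phase $x+t\partial_\xi\omega$ has modulus $\gtrsim|x|$. Repeated integration by parts in $\xi$ gives
\[
|K|\lesssim \lambda\mu\,(\lambda|x|)^{-M},
\]
with the symbol derivatives costing $\lambda^{-1}$ and the $t\partial_\xi^2\omega=-2t$ term harmless. This is integrable, with contribution $O(\mu\lambda^{1-M}L^{1-M})\lesssim 1$.

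\textbf{Near region for \eqref{eq:kernel-x}.} For $|x|\lesssim L$ the key step is a pointwise bound for $\sup_y|K|$. Integrate first in $\eta$. The one-dimensional phase $y\eta+t\xi\eta^2$ has second derivative $2t\xi$, so van der Corput gives
\[
\Bigl|\int e^{i(\cdots)}a\,d\eta\Bigr|\lesssim \min\{\mu,(|t|\lambda)^{-1/2}\},
\]
uniformly in $y$. We keep the resulting oscillatory factor explicit by completing the square:
\[
\eta\text{-integral}\approx (t\xi)^{-1/2}e^{-iy^2/(4t\xi)}\,a(\xi,-y/(2t\xi)).
\]
Then we perform the $\xi$-integral, whose phase is
\[
x\xi-t\xi^2-\frac{y^2}{4t\xi}.
\]
Its second derivative is $-2t-\dfrac{y^2}{2t\xi^3}$, and on the support $y^2\simeq t^2\lambda^2\mu^2$, so this is $\simeq t(1+\mu^2/\lambda)=tL/\lambda$. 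The quantity $L$ enters precisely here, as the loss of curvature noted after \eqref{eq:hessian-intro}. Van der Corput in $\xi$ then yields
\[
\sup_y|K(x,y,t)|\lesssim \min\Bigl\{\lambda\mu,\ (t\lambda)^{-1/2}(tL/\lambda)^{-1/2}\Bigr\}=\min\{\lambda\mu,\ (tL^{1/2})^{-1}\}.
\]

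To turn this into an $L^1_x$ bound we need a $t$-independent majorant in $x$. The $\xi$-stationary point lies at $x\approx t\,v_x$, so for fixed $x$ only times $|t|\gtrsim|x|/L$ contribute to the stationary regime. The remaining times are handled by the nonstationary bound, which gives extra decay. Combining these,
\[
\sup_{y,|t|\le1}|K|\lesssim \min\bigl\{\lambda\mu,\ L^{1/2}/|x|\bigr\}+\text{(rapidly decaying tail)},\qquad |x|\lesssim L .
\]
Integrating over $|x|\lesssim L$ gives $L^{1/2}\log(2+\lambda\mu L^{1/2})$, which is \eqref{eq:kernel-x}.

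\textbf{The bound \eqref{eq:kernel-y}.} We proceed in the same way with the roles of the variables exchanged. For $|y|\gg\lambda\mu$, integration by parts in $\eta$ (gain $(\mu|y|)^{-1}$ per step) disposes of the tail. For $|y|\lesssim\lambda\mu$, the argument above with the two integrations reversed gives
\[
\sup_{x,|t|\le1}|K|\lesssim\min\{\lambda\mu,\ \lambda/|y|\}.
\]
Here a supremum in $x$ costs the full $\xi$-width $\lambda$ unless we use the $\eta$-stationarity $y\approx 2t\xi\eta$, which localizes $|t|\gtrsim|y|/(\lambda\mu)$ and gives an $\eta$-integral of size $(t\lambda)^{-1/2}$. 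Integrating $\min\{\lambda\mu,\lambda/|y|\}$ over $|y|\lesssim\lambda\mu$ yields
\[
\lambda\bigl(1+\log(\lambda\mu\cdot\mu)\bigr)=\lambda\bigl(1+\log(2+\lambda\mu^2)\bigr).
\]
The low cases $\lambda=1$ or $\mu=1$ follow from the same bounds with the trivial estimate replacing van der Corput in the degenerate variable.

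\textbf{Main obstacle.} The delicate step is making the $\sup_t$ majorants rigorous, that is, converting pointwise-in-$t$ dispersive bounds into a bound uniform over $|t|\le1$ with only $1/|x|$ (respectively $1/|y|$) decay. This requires coupling the stationary point location to $t$ through a careful two-dimensional stationary-phase analysis: the Hessian determinant $4(\xi+\eta^2)$ from \eqref{eq:hessian-intro} is nondegenerate but anisotropic. I would first attempt a dyadic decomposition in $t$, with $t\simeq2^{-j}$, combining the nonstationary estimate off the region $|x|\simeq 2^{-j}L$ with the stationary bound on it, and then sum in $j$.
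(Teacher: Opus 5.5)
\textbf{Assessment.} For $\lambda\ge2$ your route is the paper's. You use the size bound $\lambda\mu$. You then apply stationary phase in $\eta$ followed by van der Corput in $\xi$ for the reduced phase $x\xi-t\xi^2-y^2/(4t\xi)$, whose curvature is $\gtrsim|t|L/\lambda$; this gives $\abs{K}\lesssim\min\{\lambda\mu,(|t|L^{1/2})^{-1}\}$ uniformly in $(x,y)$. Finally you use the time dichotomy $|t|L\lessgtr c|x|$ (resp.\ $|t|\lambda\mu\lessgtr c|y|$) and repeated integration by parts for the tails.

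\textbf{The gap: the block $\lambda=1$, $\mu\ge2$.} You dispose of this block in one sentence, and that sentence does not work. Its support meets the kink $\xi=0$, where $\partial_\eta^2\omega=2\xi$ degenerates. Replacing van der Corput by the trivial estimate in either variable gives at best $\abs{K_{1,\mu}}\lesssim|t|^{-1/2}$; for instance, $\int_{|\xi|\le1}\min\{\mu,(|t\xi|)^{-1/2}\}\dd\xi\simeq|t|^{-1/2}$. The required bound is $(|t|\mu)^{-1}\simeq(|t|L^{1/2})^{-1}$. In the stationary time range $|t|\gtrsim|x|/\mu^2$, your bound yields only the majorant $\mu|x|^{-1/2}$. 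Its integral over $1\lesssim|x|\lesssim\mu^2$ is $\simeq\mu^2$ instead of $\mu\log\mu$. Likewise \eqref{eq:kernel-y} would come out $\simeq\mu$ instead of $\log\mu$. This block cannot be discarded: with $\mu\simeq N$ it is the strongly transverse part of the annulus in \cref{thm:maximal}, and the loss would replace $N^{1/2}$ by $N$.

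\textbf{How to close it.} The missing ingredient is the mixed curvature $\partial_\xi\partial_\eta\omega=2\eta\simeq\mu$. The paper rescales $\eta=\mu\widetilde\eta$ and absorbs $e^{-it\xi|\xi|}$ into a $C^{1,1}$ amplitude; this is harmless since $|\xi|\lesssim1$ and $|t|\le1$. It then applies two-dimensional stationary phase to $x\xi+y\mu\widetilde\eta+S\xi\widetilde\eta^{\,2}$, $S=t\mu^2$, whose Hessian has $\abs{\det}\simeq S^2$. This gives $\mu|S|^{-1}=(|t|\mu)^{-1}$.

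An equivalent short route: after absorbing the kink, the phase is linear in $\xi$ with frequency $x+t\eta^2$. Two integrations by parts bound the inner $\xi$-integral by $\langle x+t\eta^2\rangle^{-2}$. Then $\int_{|\eta|\simeq\mu}\langle x+t\eta^2\rangle^{-2}\dd\eta\lesssim(|t|\mu)^{-1}$, since $\abs{\partial_\eta(t\eta^2)}\simeq|t|\mu$.

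For the same reason, your far-region integrations by parts in $\xi$ cannot be repeated arbitrarily across $\xi=0$ when $\lambda=1$. The first boundary terms cancel, but the jump of $\partial_\xi^2\Phi$ leaves $|t|\mu|x|^{-3}$. So only $M=2$ is available, which suffices.

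\textbf{Two smaller points.}
\begin{enumerate}
\item Your ``main obstacle'' is not one. For each fixed $(x,t)$, either $|t|L\le c|x|$, and one integration by parts in $\xi$ gives $\mu/|x|+|t|\lambda\mu/|x|^2\lesssim L^{1/2}/|x|$ (using $\lambda\mu\le L^{3/2}$). Or $|t|L>c|x|$, and the dispersive bound is $\le L^{1/2}/(c|x|)$. This is already a $t$-independent majorant, exactly as in the paper, with no dyadic decomposition in $t$ needed.
\item In \eqref{eq:kernel-y}, the stationary range $|t|\gtrsim|y|/(\lambda\mu)$ must use the full dispersive bound: $(|t|L^{1/2})^{-1}\lesssim\lambda\mu/(|y|L^{1/2})\le\lambda/|y|$. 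The $\eta$-integral bound $(|t|\lambda)^{-1/2}$ times the $\xi$-width $\lambda$ only gives $\lambda\mu^{1/2}|y|^{-1/2}$, which is too large once $|y|>\mu^{-1}$.
\end{enumerate}
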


\begin{proof}
The size estimate is
\begin{equation}\label{eq:kernel-size}
 |K_{\lambda,\mu}|\lesssim\lambda\mu.
\end{equation}
Throughout the proof we write
\begin{equation}\label{eq:full-phase}
 \Phi(\xi,\eta)=x\xi+y\eta+t\omega(\xi,\eta)
\end{equation}
for the full phase in \eqref{eq:kernel}, the variables $x,y,t$ being
regarded as parameters.  We next record the uniform dispersive estimate
\begin{equation}\label{eq:kernel-disp}
 |K_{\lambda,\mu}(x,y,t)|
 \lesssim \min\left\{\lambda\mu,\frac1{|t|L^{1/2}}\right\}.
\end{equation}
For $\lambda\ge2$, split into $\xi>0$ and $\xi<0$.
We spell out the standard iterated stationary-phase bookkeeping.  For
fixed $\xi$, insert a smooth partition between a neighbourhood of the
critical point of
\[
 \eta\longmapsto y\eta+t\xi\eta^2
\]
and the region on which its derivative is bounded away from zero.  On
the latter region, repeated integration by parts in $\eta$ gives an
arbitrarily decaying error.  On the critical neighbourhood, the
one-dimensional stationary-phase lemma with parameters
\cite{Hormander,Stein} produces the factor
$(|t||\xi|)^{-1/2}$ and incorporates its remainder into an amplitude
whose total variation in $\xi$ is bounded by a constant times its
supremum on $|\xi|\simeq\lambda$.  This follows after rescaling
$\eta=\mu\widetilde\eta$: derivatives of the rescaled cutoff are
uniform, while differentiation of $(|t||\xi|)^{-1/2}$ costs only
$O(\lambda^{-1})$.  Thus the subsequent van der Corput estimate
applies to the principal stationary-phase term and its remainder.  If
the relevant oscillatory parameter is smaller than one, the same
normalized-amplitude statement follows directly from the size bound.

On $\xi>0$, the resulting phase is
\[
 \Psi_+(\xi)=x\xi-t\xi^2-\frac{y^2}{4t\xi},
\]
and
\[
 \Psi_+''(\xi)=-2t-\frac{y^2}{2t\xi^3}.
\]
The two terms have the same sign.  On the stationary support, $|y|\simeq|t|\lambda\mu$, and hence
\[
 |\Psi_+''(\xi)|\gtrsim |t|\frac{L}{\lambda}.
\]
Van der Corput in $\xi$ gives the second factor
$(|t|L/\lambda)^{-1/2}$, proving \eqref{eq:kernel-disp}.  The
half-plane $\xi<0$ is identical after the substitution $\xi=-\rho$.
The nonstationary $\eta$ contribution satisfies the same estimate,
with additional decay, by the integrations by parts just described.

It remains to prove \eqref{eq:kernel-disp} for the block $\lambda=1$,
which may meet the kink $\xi=0$.  There the iterated argument above
breaks down, because $\partial_\eta^2\omega=2\xi$ degenerates as
$\xi\to0$, and one must use the full two-dimensional curvature.  When
$\mu=1$ the estimate is immediate from \eqref{eq:kernel-size}, since
$|t|\le1$ and $L\simeq1$.  Assume therefore $\mu\ge2$ and rescale
$\eta=\mu\widetilde\eta$, so that
\begin{equation}\label{eq:lowblock-rescaled}
 K_{1,\mu}(x,y,t)
 =\mu\iint
 e^{i(x\xi+y\mu\widetilde\eta+S\xi\widetilde\eta^2)}
 b(\xi,\widetilde\eta)\dd\xi\dd\widetilde\eta,
 \qquad S=t\mu^2,
\end{equation}
where
\[
 b(\xi,\widetilde\eta)
 =a_{1,\mu}(\xi,\mu\widetilde\eta)\,e^{-it\xi|\xi|}.
\]
Thus the kink has been absorbed into the amplitude.  This is legitimate
because $|\xi|\lesssim1$ on the block and $|t|\le1$, so that
\[
 \partial_\xi\bigl(t\xi|\xi|\bigr)=2t|\xi|=O(1),
 \qquad
 \partial_\xi^2\bigl(t\xi|\xi|\bigr)=2t\sgn\xi=O(1)
\]
almost everywhere.  Hence $b$ is supported in a set of unit size and
has derivatives of order at most two bounded uniformly in $\mu$ and
$t$, together with bounded derivatives of every order in
$\widetilde\eta$; that is, $b\in C^{1,1}$ with uniform bounds.  The
remaining phase
\[
 \widetilde\Phi(\xi,\widetilde\eta)
 =x\xi+y\mu\widetilde\eta+S\xi\widetilde\eta^2
\]
is a polynomial, and on the support $|\widetilde\eta|\simeq1$ its
Hessian satisfies
\[
 \det D^2\widetilde\Phi=-4S^2\widetilde\eta^2,
 \qquad
 \left|\det D^2\widetilde\Phi\right|\simeq S^2 .
\]
Moreover $\widetilde\Phi$ has at most two critical points there: the
equation $\partial_\xi\widetilde\Phi=0$ determines
$\widetilde\eta^{\,2}$, and $\partial_{\widetilde\eta}\widetilde\Phi=0$
then determines $\xi$.  Two-dimensional
stationary phase with a $C^{1,1}$ amplitude on a support of unit size
therefore gives
\[
 |K_{1,\mu}(x,y,t)|
 \lesssim\mu\,|S|^{-1}
 =\frac1{|t|\mu}
 \simeq\frac1{|t|L^{1/2}},
\]
which is the second alternative in \eqref{eq:kernel-disp}; when
$|t|\mu^2\lesssim1$ the size estimate \eqref{eq:kernel-size} is the
stronger of the two.  In invariant terms this exponent records
\[
 |\det D^2\omega(\xi,\eta)|=4(|\xi|+\eta^2)\simeq\mu^2
 \qquad\text{on the block }\lambda=1,\ \mu\ge2 .
\]

We turn to the nonstationary estimates used for the two majorants.
When $\lambda\ge2$ the $\xi$ support is separated from the origin and
all integrations by parts below are classical.  For the low
$x$-frequency block $\lambda=1$, split the $\xi$ integral at zero.
The phase and its first $\xi$ derivative are continuous at zero,
\[
 \Phi(0^+,\eta)=\Phi(0^-,\eta),\qquad
 \partial_\xi\Phi(0^+,\eta)=\partial_\xi\Phi(0^-,\eta)
 =x+t\eta^2,
\]
so the first boundary terms cancel.  The second derivative has the
jump $\partial_\xi^2\Phi(0^\pm,\eta)=\mp2t$; after two integrations by
parts its boundary contribution is bounded by
$C|t|\mu |x|^{-3}$.  Hence
\[
 \int_{|x|>CL}|t|\mu |x|^{-3}\,\dd x
 \lesssim \frac{|t|\mu}{L^2}\lesssim1\lesssim L^{1/2}.
\]
Thus two integrations suffice for the $\lambda=1$ tail, while the
higher blocks admit arbitrary repetitions.  This justifies the
nonstationary estimates across the kink of $\xi|\xi|$.

For the $x$ majorant define
\[
 x_0=\frac{L^{1/2}}{\lambda\mu}.
\]
The size estimate applies for $|x|\le x_0$.  If $x_0<|x|\le CL$ and $|t|L\le c|x|$, then $|\partial_\xi\Phi|\ge|x|/2$, and one integration by parts in $\xi$ gives
\[
 |K_{\lambda,\mu}|
 \lesssim \frac{\mu}{|x|}
 +\frac{|t|\lambda\mu}{|x|^2}
 \lesssim \frac{L^{1/2}}{|x|}.
\]
If $|t|L>c|x|$, use \eqref{eq:kernel-disp} to obtain the same bound.
Repeated integration by parts gives rapid decay for $|x|>CL$; each
step gains a factor $(\lambda|x|)^{-1}$.  Indeed, symbol derivatives
are of size $\lambda^{-1}$ and
$|\partial_\xi\Phi|\ge|x|/2$, while the term in which the derivative
falls on the reciprocal phase derivative is bounded by
$|t|/|x|^2$.  Since $|x|>CL\ge C\lambda$ and $|t|\le1$, this term is
also bounded by $C/(\lambda|x|)$.  Therefore
\[
 \sup_{y,t}|K_{\lambda,\mu}(x,y,t)|
 \lesssim_M
 \begin{cases}
 \lambda\mu,& |x|\le x_0,\\
 L^{1/2}|x|^{-1},&x_0<|x|\le CL,\\
\lambda\mu\,(1+\lambda|x|)^{-M},&|x|>CL.
 \end{cases}
\]
Here $M$ may be taken arbitrarily large when $\lambda\ge2$, whereas for
$\lambda=1$ the discussion above supplies $M=2$; since $L=1+\mu^2$ on
that block, the corresponding tail integral is
$\displaystyle\int_{|x|>CL}\mu|x|^{-2}\dd x\lesssim\mu/L\lesssim1\lesssim L^{1/2}$,
so $M=2$ is all that is needed.
Integration proves \eqref{eq:kernel-x}.

For the $y$ majorant let $y_0=\mu^{-1}$.  If $|t|\lambda\mu\le c|y|$, integration by parts in $\eta$ gives $|K|\lesssim\lambda/|y|$.  In the complementary region, \eqref{eq:kernel-disp} gives
\[
 |K|\lesssim\frac{\lambda\mu}{|y|L^{1/2}}
 \le\frac{\lambda}{|y|}.
\]
Repeated integration by parts applies for $|y|>C\lambda\mu$.  Thus
\[
 \sup_{x,t}|K_{\lambda,\mu}(x,y,t)|
 \lesssim_M
 \begin{cases}
 \lambda\mu,& |y|\le y_0,\\
 \lambda|y|^{-1},&y_0<|y|\le C\lambda\mu,\\
\lambda\mu\,(1+\mu|y|)^{-M},&|y|>C\lambda\mu.
 \end{cases}
\]
This proves \eqref{eq:kernel-y}.
\end{proof}

\subsection{Strichartz estimates}

A pair $(p,q)$ will be called BO--ZK admissible if
\begin{equation}\label{eq:admissible}
 \frac83<p<\infty,
 \qquad
 \frac1q+\frac{4}{3p}=\frac12.
\end{equation}
The next estimate is the BO--ZK analogue of the linear estimate used
in the refined Strichartz argument of Linares, Pilod, and Saut.  We
include the proof because the product-frequency geometry is essential:
an isotropic $TT^*$ argument would leave a loss $N^{1/(6p)}$ after
Bernstein and would not lead to the exponent $19/16$.

\begin{proposition}\label{prop:strichartz}
For every admissible pair $(p,q)$ and every interval $I$ with
$|I|\le1$,
\begin{equation}\label{eq:strichartz}
 \norm{U(t)f}_{L_I^pL_{x,y}^q}\lesssim_{p,q}\norm f_2.
\end{equation}
The constant is independent of the position and length of $I$.
\end{proposition}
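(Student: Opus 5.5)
The plan is a frequency-localized $TT^*$ argument on product blocks $P^x_\lambda P^y_\mu$. The point is to extract from \eqref{eq:kernel-disp} a decay rate in $t$ that is \emph{uniform} in $(\lambda,\mu)$, and then to resum the blocks by a product Littlewood--Paley square function.

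\textbf{Step 1 (uniform block decay).} Combine the size bound with the dispersive bound in \eqref{eq:kernel-disp} by the elementary inequality $\min\{A,B\}\le A^{1-\alpha}B^{\alpha}$, $0\le\alpha\le1$. This gives
\[
 |K_{\lambda,\mu}(x,y,t)|\lesssim (\lambda\mu)^{1-\alpha}L^{-\alpha/2}|t|^{-\alpha},\qquad |t|\le1 .
\]
Since $\lambda\le L$ and $\mu\le L^{1/2}$, we have $\lambda\mu\le L^{3/2}$. The prefactor is therefore bounded by $L^{3(1-\alpha)/2-\alpha/2}$, which is $\le1$ exactly when $\alpha\ge3/4$. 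Taking $\alpha=3/4$ yields
\[
 \norm{U(t)P^x_\lambda P^y_\mu f}_{L^\infty}\lesssim|t|^{-3/4}\norm{f}_{L^1},\qquad |t|\le1,
\]
uniformly in the dyadic parameters. The symbol $a_{\lambda,\mu}$ is taken to be the square of the block cutoff, so that $K_{\lambda,\mu}(\cdot,t-s)$ is the kernel of $(U(t)P_{\lambda\mu})(U(s)P_{\lambda\mu})^*$. The low blocks $\lambda=1$ or $\mu=1$ are included, since \eqref{eq:kernel-disp} was proved there as well, including across the kink at $\xi=0$.

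\textbf{Step 2 (blockwise Strichartz).} Interpolate the $L^1\to L^\infty$ bound with $L^2$ unitarity. This gives
\[
 \norm{(U(t)P_{\lambda\mu})(U(s)P_{\lambda\mu})^*}_{L^{q'}\to L^q}\lesssim|t-s|^{-\frac34(1-\frac2q)} .
\]
For $t,s\in I$ with $|I|\le1$ we have $|t-s|\le1$, so Step 1 applies. The $TT^*$ argument together with one-dimensional Hardy--Littlewood--Sobolev in time then gives $\norm{U(t)P_{\lambda\mu}f}_{L^p_IL^q}\lesssim\norm{P_{\lambda\mu}f}_2$ provided $\frac2p=\frac34(1-\frac2q)$, which is precisely $\frac1q+\frac4{3p}=\frac12$. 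The HLS exponent $\frac2p$ lies in $(0,1)$, and $q<\infty$ corresponds to $p>8/3$. Both are guaranteed by \eqref{eq:admissible}. The constant depends only on $(p,q)$; in particular it does not depend on the position of $I$, because $U$ is a group and only $|t-s|$ enters.

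\textbf{Step 3 (summation).} Since $2\le q<\infty$, the product (two-parameter) Littlewood--Paley inequality gives $\norm{g}_{L^q}\lesssim\norm{(\sum_{\lambda,\mu}|P_{\lambda\mu}g|^2)^{1/2}}_{L^q}$. Here one uses slightly enlarged blocks $\widetilde P_{\lambda\mu}$ with $\widetilde P_{\lambda\mu}P_{\lambda\mu}=P_{\lambda\mu}$, and applies Step 2 to them. Since $p,q\ge2$, Minkowski's inequality then bounds the $L^p_IL^q$ norm by
\[
 \Bigl(\sum_{\lambda,\mu}\norm{U(t)P_{\lambda\mu}f}_{L^p_IL^q}^2\Bigr)^{1/2}\lesssim\Bigl(\sum_{\lambda,\mu}\norm{P_{\lambda\mu}f}_2^2\Bigr)^{1/2}\simeq\norm{f}_2 .
\]

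\textbf{Main obstacle.} The delicate point is Step 1. The interpolation exponent $\alpha=3/4$ is forced: the bound $\lambda\mu\le L^{3/2}$ is attained on the parabolic blocks $\lambda\simeq\mu^2$. At $\alpha=3/4$ the loss is exactly cancelled, with no power of $N$ left over. This is why one must work with product blocks rather than isotropic ones. An isotropic decomposition would only see $L\gtrsim N$ and $|K|\lesssim N^{3/2}$, which leaves the loss $N^{1/(6p)}$ mentioned before the statement. The rest of the argument is standard. One only has to check that the $\lambda=1$ kernel bound holds with the same uniform constants, and this was established in the proof of \cref{prop:kernel}.
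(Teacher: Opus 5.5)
Your proof is correct and is essentially the paper's: the same product-block decomposition, the same dispersive bound \eqref{eq:kernel-disp}, $TT^*$ with one-dimensional Hardy--Littlewood--Sobolev, and product Littlewood--Paley recombination. The only difference is bookkeeping, and the two versions are equivalent. You balance the size bound $\lambda\mu$ against $L^{-1/2}$ directly in the kernel via $\min\{A,B\}\le A^{1/4}B^{3/4}$, whereas the paper runs $TT^*$ into $L^r$ with $\frac1r+\frac1p=\frac12$ (gaining $L^{-1/(2p)}$) and then pays $(\lambda\mu)^{1/(3p)}$ by Bernstein.

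There is one slip in your closing remark. On an isotropic block the size bound is $|K|\lesssim N^2$, not $N^{3/2}$, and it is $N^2$ that leaves the loss $N^{1/(6p)}$; with $N^{3/2}$ your own interpolation would give no loss at all.
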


\begin{proof}
Let $U_{\lambda,\mu}(t)=U(t)P^x_\lambda P^y_\mu$ and set
$L=\lambda+\mu^2$.  By \eqref{eq:kernel-disp},
\begin{equation}\label{eq:block-dispersive-strichartz}
 \norm{U_{\lambda,\mu}(t)U_{\lambda,\mu}(s)^*g}_{L^\infty}
 \lesssim |t-s|^{-1}L^{-1/2}\norm g_{L^1}.
\end{equation}
The same operator is bounded on $L^2$ by unitarity.  Let $r$ be
defined by
\begin{equation}\label{eq:r-strichartz}
 \frac1r+\frac1p=\frac12.
\end{equation}
Interpolation between \eqref{eq:block-dispersive-strichartz} and the
$L^2$ bound gives
\begin{equation}\label{eq:block-interpolated}
 \norm{U_{\lambda,\mu}(t)U_{\lambda,\mu}(s)^*g}_{L^r}
 \lesssim
 |t-s|^{-2/p}L^{-1/p}\norm g_{L^{r'}}.
\end{equation}
Since $p>2$, the one-dimensional Hardy--Littlewood--Sobolev inequality
applied to the time convolution in \eqref{eq:block-interpolated}
yields
\begin{equation}\label{eq:block-TTstar}
 \norm{TT^*G}_{L_I^pL^r}
 \lesssim L^{-1/p}\norm G_{L_I^{p'}L^{r'}},
 \qquad T f=U_{\lambda,\mu}(t)f\big|_I.
\end{equation}
Taking the square root of the $TT^*$ norm, we obtain
\begin{equation}\label{eq:block-strichartz-r}
 \norm{U_{\lambda,\mu}(t)f}_{L_I^pL^r}
 \lesssim L^{-1/(2p)}\norm f_2.
\end{equation}

The admissibility relation and \eqref{eq:r-strichartz} give
\begin{equation}\label{eq:bernstein-gap}
 \frac1r-\frac1q=\frac1{3p}.
\end{equation}
Bernstein's inequality on the product block therefore implies
\begin{align}
 \norm{U_{\lambda,\mu}(t)f}_{L_I^pL^q}
 &\lesssim
 (\lambda\mu)^{1/(3p)}L^{-1/(2p)}\norm f_2.\label{eq:block-strichartz-q}
\end{align}
Since $\lambda\le L$ and $\mu\le L^{1/2}$,
\begin{equation}\label{eq:block-cancellation}
 \lambda\mu\le L^{3/2},
 \qquad
 (\lambda\mu)^{1/(3p)}L^{-1/(2p)}\le1.
\end{equation}
Thus every product block satisfies \eqref{eq:strichartz} with a
uniform constant.

It remains to recombine the blocks.  The product Littlewood--Paley
square function is bounded on $L^q(\R^2)$, and $p,q\ge2$.  Hence
Minkowski's inequality for the $\ell^2$ sum gives
\begin{align*}
 \norm{U(t)f}_{L_I^pL^q}
 &\lesssim
 \norm{\left(\sum_{\lambda,\mu}
 |U(t)P^x_\lambda P^y_\mu f|^2\right)^{1/2}}
 _{L_I^pL^q}\\
 &\le
 \left(\sum_{\lambda,\mu}
 \norm{U(t)P^x_\lambda P^y_\mu f}_{L_I^pL^q}^2
 \right)^{1/2}\\
 &\lesssim
 \left(\sum_{\lambda,\mu}
 \norm{P^x_\lambda P^y_\mu f}_2^2\right)^{1/2}
 \lesssim \norm f_2.
\end{align*}
The Hardy--Littlewood--Sobolev estimate is applied after extending functions on $I$ by zero, so its constant is independent of $I$.
\end{proof}

\subsection{\texorpdfstring{$TT^*$}{TT-star} and the dyadic maximal estimate}

\begin{lemma}\label{lem:TTstar}
Let $U_{\lambda,\mu}(t)$ be the block propagator with kernel \eqref{eq:kernel}.  If $A_x$ and $A_y$ denote the left-hand sides of \eqref{eq:kernel-x} and \eqref{eq:kernel-y}, then
\begin{align}
 \norm{U_{\lambda,\mu}(t)f}_{L_x^2L_{y,T}^\infty}
 &\lesssim A_x^{1/2}\norm f_2,\label{eq:TTx}\\
 \norm{U_{\lambda,\mu}(t)f}_{L_y^2L_{x,T}^\infty}
 &\lesssim A_y^{1/2}\norm f_2.\label{eq:TTy}
\end{align}
\end{lemma}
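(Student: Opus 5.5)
We prove \eqref{eq:TTx} by the standard $TT^*$ argument, with the mixed norm $L_x^2L_{y,T}^\infty$ in place of a Lebesgue norm; \eqref{eq:TTy} then follows by exchanging the roles of $x$ and $y$. Let $T_\lambda f=U_{\lambda,\mu}(t)f$ restricted to $t\in[0,T]$, and replace the symbol $a_{\lambda,\mu}$ in \eqref{eq:kernel} by $\tilde a_{\lambda,\mu}$. Since both projections have real symbols, we may write $P^x_\lambda P^y_\mu=Q^*Q$ with $Q=Q^*$ a Fourier multiplier whose symbol is $\sqrt{\cdot}$ of a smooth nonnegative bump. If that square root is not smooth, we instead use a fattened cutoff $\tilde a$ equal to $1$ on $\supp a_{\lambda,\mu}$ and write $U_{\lambda,\mu}=\tilde U\, P^x_\lambda P^y_\mu$, with $P^x_\lambda P^y_\mu$ bounded on $L^2$. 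In either case it suffices to bound $S f=\tilde U(t)f$, where $\tilde U$ is the propagator with a squared-cutoff symbol $|\tilde a|^2$ of the same type. Because \cref{prop:kernel} applies to any smooth symbol adapted to the block, the kernel $\tilde K$ of $SS^*$ obeys the same majorant $A_x$.

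The key steps are as follows. First, by duality, $\norm{Sf}_{L_x^2L_{y,T}^\infty}\le C\norm f_2$ is equivalent to
\[
\norm{S^*G}_{L^2}\le C\norm G_{L_x^2L_{y,T}^1}
\]
for $G=G(x,y,t)$ supported in $[0,T]$, where $S^*G=\int_0^T\tilde U(-s)G(s)\dd s$. This in turn is equivalent to
\[
\norm{SS^*G}_{L_x^2L_{y,T}^\infty}\le C^2\norm G_{L_x^2L_{y,T}^1}.
\]
Second, we write
\[
SS^*G(x,y,t)=\int_0^T\!\!\iint \tilde K(x-x',y-y',t-s)\,G(x',y',s)\dd x'\dd y'\dd s,
\]
where $\tilde K$ has the form \eqref{eq:kernel} with the symbol $|\tilde a|^2$. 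Since $|t-s|\le T\le1$, we obtain pointwise
\[
\sup_{y,t}|SS^*G(x,\cdot,\cdot)|\le\int_\R k(x-x')\,\norm{G(x',\cdot,\cdot)}_{L^1_{y,T}}\dd x',
\qquad k(z)=\sup_{y,|t|\le1}|\tilde K(z,y,t)|.
\]
Third, Young's inequality in $x$ gives the bound $\norm k_{L^1}\norm G_{L_x^2L^1_{y,T}}$, and $\norm k_{L^1}\lesssim A_x$ by \eqref{eq:kernel-x}. Taking square roots yields \eqref{eq:TTx}. The same computation with the $y$ majorant \eqref{eq:kernel-y} proves \eqref{eq:TTy}.

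The only point needing care is the claim that \cref{prop:kernel} applies verbatim to the kernel of $SS^*$. This kernel involves the symbol $|\tilde a_{\lambda,\mu}|^2$ and the time difference $t-s\in[-1,1]$, rather than $a_{\lambda,\mu}$ and $t\in[0,1]$. The proposition is stated uniformly for $|t|\le1$ and for any smooth symbol supported on the product block with the standard derivative bounds, so this is routine. We record that $A_x,A_y$ in the statement are understood for this squared symbol, which has the same size up to constants. We also note that interpreting $\sup_t$ over $[0,T]$ requires $S^*G$ to be defined for $G\in L_x^2L^1_{y,T}$; this is handled by density of Schwartz functions.
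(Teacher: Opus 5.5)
Your proposal is correct and follows essentially the paper's argument. Both use duality with $L_x^2L_{y,T}^1$, observe that the $TT^*$ kernel has symbol $|a_{\lambda,\mu}|^2$ and is controlled by the majorants of \cref{prop:kernel} because $|t-s|\le T\le1$, and finish with Young's inequality in $x$. The paper bounds the quadratic form $\norm{T^*g}_2^2$ directly rather than $SS^*G$, which is the same computation.

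Your fattened-cutoff and $Q^*Q$ detour is unnecessary. The identity $U_{\lambda,\mu}(t')U_{\lambda,\mu}(t)^*=e^{i(t'-t)\omega(D)}|a_{\lambda,\mu}|^2(D)$ already produces a squared symbol of the required type.
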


\begin{proof}
We prove \eqref{eq:TTx}; the proof of \eqref{eq:TTy} is obtained by
interchanging the roles of $x$ and $y$ and using \eqref{eq:kernel-y}
in place of \eqref{eq:kernel-x}.

Write $Tf=U_{\lambda,\mu}(t)f$, regarded as a map
$L^2(\R^2)\to L_x^2L_{y,T}^\infty$.  Frequency localization makes
$Tf$ continuous in $(y,t)$, so the pointwise supremum in the mixed
norm is measurable.  The K\"othe-dual norm identity for mixed
Lebesgue spaces gives
\[
 \norm{Tf}_{L_x^2L_{y,T}^\infty}
 =\sup_{\norm g_{L_x^2L_{y,T}^1}\le1}|\ip{Tf}{g}|.
\]
This identity can also be obtained first for bounded,
compactly supported simple functions and then by monotone
approximation.  Therefore \eqref{eq:TTx} follows from
\begin{equation}\label{eq:Tstar}
 \norm{T^*g}_{L^2(\R^2)}
 \lesssim A_x^{1/2}\norm g_{L_x^2L_{y,T}^1},
 \qquad
 T^*g=\int_0^TU_{\lambda,\mu}(t)^*g(\cdot,t)\dd t .
\end{equation}
Indeed, $\ip{Tf}{g}=\ip f{T^*g}$ for such simple functions, and this
identity extends by density in the norming space
$L_x^2L_{y,T}^1$.

To prove \eqref{eq:Tstar}, expand the square:
\begin{equation}\label{eq:TTstar-expand}
 \norm{T^*g}_{L^2}^2
 =\int_0^T\!\!\int_0^T
 \ip{U_{\lambda,\mu}(t')U_{\lambda,\mu}(t)^*g(t)}{g(t')}
 \dd t\dd t' .
\end{equation}
The operator $U_{\lambda,\mu}(t')U_{\lambda,\mu}(t)^*$ is convolution
in $(x,y)$ with $\widetilde K_{\lambda,\mu}(\cdot,\cdot,t'-t)$, where
$\widetilde K_{\lambda,\mu}$ is the kernel \eqref{eq:kernel} with
$a_{\lambda,\mu}$ replaced by $|a_{\lambda,\mu}|^2$.  The latter symbol
satisfies the same support and derivative hypotheses as
$a_{\lambda,\mu}$, so \cref{prop:kernel} applies to
$\widetilde K_{\lambda,\mu}$ with the same majorants.  Moreover
$|t'-t|\le T\le1$, so only the range $|t|\le1$ of \eqref{eq:kernel-x}
is used.  Set
\[
 k_{\lambda,\mu}^{x}(x)=\sup_{y\in\R,\ |t|\le1}
 |\widetilde K_{\lambda,\mu}(x,y,t)|,
 \qquad
 G(x)=\int_0^T\!\!\int_\R|g(x,y,t)|\dd y\dd t .
\]
Estimating the integrand of \eqref{eq:TTstar-expand} pointwise and
taking the supremum in the $y$ and $t$ differences gives
\[
 \norm{T^*g}_{L^2}^2
 \le\int_\R\int_\R k_{\lambda,\mu}^{x}(x-x')G(x')G(x)\dd x'\dd x .
\]
By Young's inequality in $x$ followed by Cauchy--Schwarz,
\[
 \norm{T^*g}_{L^2}^2
 \le\norm{k_{\lambda,\mu}^{x}}_{L^1(\R)}\norm G_{L^2(\R)}^2
 \lesssim A_x\norm g_{L_x^2L_{y,T}^1}^2,
\]
since $\norm{k_{\lambda,\mu}^{x}}_{L^1}\lesssim A_x$ by
\eqref{eq:kernel-x} and
$\norm G_{L^2(\R)}=\norm g_{L_x^2L_{y,T}^1}$.  This is
\eqref{eq:Tstar}.
\end{proof}
\subsection{Proof of the maximal-function estimate}

We now prove \cref{thm:maximal}, the principal new linear ingredient in the proof of the main local well-posedness result.

\begin{proof}[Proof of \cref{thm:maximal}]
Combining \cref{prop:kernel} with \cref{lem:TTstar},
\begin{align}
 \norm{U_{\lambda,\mu}(t)f}_{L_x^2L_{y,T}^\infty}
 &\lesssim L^{1/4}\bigl(1+\log(2+\lambda\mu L^{1/2})\bigr)^{1/2}
 \norm f_2,\label{eq:block-max-x}\\
 \norm{U_{\lambda,\mu}(t)f}_{L_y^2L_{x,T}^\infty}
 &\lesssim \lambda^{1/2}\bigl(1+\log(2+\lambda\mu^2)\bigr)^{1/2}
 \norm f_2.\label{eq:block-max-y}
\end{align}

Let $\mathcal B_N$ be the set of dyadic pairs $(\lambda,\mu)$ for
which $P^x_\lambda P^y_\mu P_N\ne0$.  If $(\lambda,\mu)\in\mathcal B_N$
then $\lambda\le2N$ and $\mu\le2N$; moreover, since
$\max\{|\xi|,|\eta|\}\ge|\zeta|/\sqrt2\ge N/(2\sqrt2)$ on the support
of $P_N$, at least one of $\lambda,\mu$ is comparable to $N$.  Hence
\begin{equation}\label{eq:block-count}
 \#\mathcal B_N\lesssim\log N .
\end{equation}
For $(\lambda,\mu)\in\mathcal B_N$ one has
$L=\lambda+\mu^2\lesssim N^2$, so $L^{1/4}\lesssim N^{1/2}$ and
$\lambda^{1/2}\lesssim N^{1/2}$; and $\lambda\mu L^{1/2}\lesssim N^3$,
$\lambda\mu^2\lesssim N^3$, so both logarithms in
\eqref{eq:block-max-x}-\eqref{eq:block-max-y} are $O(1+\log N)$.
Consequently, uniformly over $\mathcal B_N$,
\begin{equation}\label{eq:block-max-uniform}
 \norm{U_{\lambda,\mu}(t)P_Nf}_{L_x^2L_{y,T}^\infty}
 +\norm{U_{\lambda,\mu}(t)P_Nf}_{L_y^2L_{x,T}^\infty}
 \lesssim N^{1/2}(1+\log N)^{1/2}
 \norm{P^x_\lambda P^y_\mu P_Nf}_2 .
\end{equation}

The two target norms contain an $L^\infty$ component, so no
Banach-valued Littlewood--Paley orthogonality is available and we sum
by hand.  Writing
$P_Nf=\sum_{(\lambda,\mu)\in\mathcal B_N}P^x_\lambda P^y_\mu P_Nf$,
the triangle inequality, \eqref{eq:block-max-uniform}, Cauchy--Schwarz
over $\mathcal B_N$, \eqref{eq:block-count}, and the almost
orthogonality of the product projections give
\begin{align*}
 &\norm{U(t)P_Nf}_{L_x^2L_{y,T}^\infty}
 +\norm{U(t)P_Nf}_{L_y^2L_{x,T}^\infty}\\
 &\qquad\le
 \sum_{(\lambda,\mu)\in\mathcal B_N}
 \Bigl(
 \norm{U_{\lambda,\mu}(t)P_Nf}_{L_x^2L_{y,T}^\infty}
 +\norm{U_{\lambda,\mu}(t)P_Nf}_{L_y^2L_{x,T}^\infty}
 \Bigr)\\
 &\qquad\lesssim
 N^{1/2}(1+\log N)^{1/2}
 \sum_{(\lambda,\mu)\in\mathcal B_N}
 \norm{P^x_\lambda P^y_\mu P_Nf}_2\\
 &\qquad\le
 N^{1/2}(1+\log N)^{1/2}
 (\#\mathcal B_N)^{1/2}
 \Bigl(
 \sum_{(\lambda,\mu)\in\mathcal B_N}
 \norm{P^x_\lambda P^y_\mu P_Nf}_2^2
 \Bigr)^{1/2}\\
 &\qquad\lesssim
 N^{1/2}(1+\log N)\norm{P_Nf}_2 .
\end{align*}
This proves \eqref{eq:dyadic-maximal} with $C_0=1$.  The
$N^{1/2+\eps}$ consequence follows by absorbing the logarithm into
$N^\eps$.
\end{proof}

\begin{remark}\label{rem:sharpness}
Fix $\psi\in\Sscr(\R^2)$, $\psi\not\equiv0$, with $\widehat\psi$
supported in the annulus $\{3/4\le|\zeta|\le3/2\}$, and set
$f_N(x,y)=N\psi(Nx,Ny)$.  Then
$\widehat{f_N}(\zeta)=N^{-1}\widehat\psi(\zeta/N)$ is supported where
the symbol $\varphi_1(\cdot/N)$ of $P_N$ equals one, so
$P_Nf_N=f_N$, and $\norm{f_N}_2=\norm\psi_2\simeq1$.  Putting
$\Psi(a)=\sup_{b\in\R}|\psi(a,b)|$, which lies in $L^2(\R)$, one has
$\sup_y|f_N(x,y)|=N\Psi(Nx)$ and therefore, already at $t=0$,
\[
 \norm{f_N}_{L_x^2L_y^\infty}
 =N^{1/2}\norm\Psi_{L^2(\R)}\simeq N^{1/2},
\]
and symmetrically for the other mixed norm.  Since the left-hand side
of \eqref{eq:dyadic-maximal} dominates its value at $t=0$, the power
$N^{1/2}$ cannot be lowered.  Note that this example uses no
dispersion whatsoever: the power $N^{1/2}$ is precisely the cost of
the embedding $L^2\to L_x^2L_y^\infty$ at frequency $N$.  The content
of \cref{thm:maximal} is that the additional supremum over
$t\in[0,T]$ is free up to logarithms.
\end{remark}

Define the continuous mixed maximal norm
\begin{equation}\label{eq:M-def}
 M_T(w)=
 \norm w_{L_x^2L_{y,T}^\infty}
 +\norm w_{L_y^2L_{x,T}^\infty}.
\end{equation}

\begin{corollary}\label{cor:retarded-max}
For every $\eps>0$ and every solution of \eqref{eq:linear},
\begin{equation}\label{eq:retarded-max}
 M_T(w)
 \lesssim_\eps
 \norm{J^{1/2+\eps}w(0)}_2
 +\norm{J^{1/2+\eps}F}_{L_T^1L^2}.
\end{equation}
\end{corollary}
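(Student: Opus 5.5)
We derive \eqref{eq:retarded-max} from the dyadic estimate of \cref{thm:maximal}, using Duhamel's formula and then a Littlewood--Paley summation. Write
\[
 w(t)=U(t)w(0)+\int_0^tU(t-t')F(t')\dd t'.
\]
Since $M_T$ is a norm, it suffices to estimate each dyadic piece $P_Nw$ and sum. For the low-frequency piece $P_1w$ we use Bernstein's inequality in each variable: $\norm{U(t)P_1f}_{L^\infty_{x,y}}$ and its one-variable analogues are controlled by $\norm f_2$. Concretely, $\sup_{y,t}|U(t)P_1f(x,y)|$ is dominated by a maximal convolution in $x$ of $f$ with a Schwartz-type majorant, uniformly for $|t|\le1$. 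Since $\omega$ is bounded on the unit ball, the kernel $K_{1,1}$ of \cref{prop:kernel} already has integrable $x$- and $y$-majorants. Hence \cref{lem:TTstar} gives the bound $\lesssim\norm{f}_2$ for this piece as well.

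For the homogeneous term we sum over $N\ge2$. We insert a slightly enlarged projection $\widetilde P_N$ so that $P_N=\widetilde P_NP_N$; the proof of \cref{thm:maximal} applies verbatim to $\widetilde P_N$. The triangle inequality then gives
\[
 M_T(U(t)w(0))\le\sum_N M_T(U(t)P_Nw(0))\lesssim_\eps\sum_N N^{1/2+\eps/2}\norm{P_Nw(0)}_2 .
\]
By Cauchy--Schwarz, with the summable factor $N^{-\eps/2}$, this is $\lesssim\norm{J^{1/2+\eps}w(0)}_2$. Because the target norms contain $L^\infty$ components, no square-function argument is available. This is exactly why $\eps>0$ is needed: we use $N^{1/2+\eps/2}$ in the dyadic bound and spend the remaining $N^{\eps/2}$ to make the sum over $N$ converge.

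For the inhomogeneous term, the obstacle is the truncation $t'<t$. We avoid a Christ--Kiselev argument, since the outer norm is $L^2$ and not a time Lebesgue norm where that lemma fits cleanly. Instead we apply Minkowski's inequality directly to the pointwise majorant. For fixed $(x,y)$,
\[
 \sup_{t\in[0,T]}\Bigl|\int_0^t U(t-t')F(t')\dd t'\Bigr|\le\int_0^T\sup_{t\in[0,T]}\bigl|\mathbf 1_{t'<t}\,U(t)U(-t')F(t')\bigr|\dd t'.
\]
Dropping the indicator only enlarges the supremum. This gives
\[
 \sup_{y,t}|\cdots|\le\int_0^T\sup_{y,t\in[0,T]}|U(t)[U(-t')F(t')]|\dd t'.
\]
Taking the $L^2_x$ norm and applying Minkowski's integral inequality, we bound the result by $\int_0^T\norm{U(t)g_{t'}}_{L^2_xL^\infty_{y,T}}\dd t'$, where $g_{t'}=U(-t')F(t')$.

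Now we apply the homogeneous bound to each $g_{t'}$. Since $U(-t')$ is unitary and commutes with $J$, we have $\norm{J^{1/2+\eps}g_{t'}}_2=\norm{J^{1/2+\eps}F(t')}_2$. The same computation applies to the $L^2_yL^\infty_{x,T}$ component. Together these give the $\norm{J^{1/2+\eps}F}_{L^1_TL^2}$ term. The only points requiring care are the measurability of the suprema, which follows by first working with frequency-localized smooth $F$ and then approximating, and the fact that $\sup_{t\in[0,T]}$ is taken over $[0,T]$ with $T\le1$, which matches the range covered by \cref{thm:maximal}.
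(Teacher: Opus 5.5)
Your proposal is correct and follows essentially the same route as the paper. The ingredients match: \cref{thm:maximal} on each dyadic block $N\ge2$, the $\lambda=\mu=1$ case of \cref{lem:TTstar} for $P_1$, pointwise domination of the time-truncated supremum followed by Minkowski for the Duhamel term (so no Christ--Kiselev is needed), and an $\eps$-loss Cauchy--Schwarz summation in $N$. The differences are cosmetic: you apply Minkowski to the whole Duhamel integral via $U(t-t')=U(t)U(-t')$, where the paper works dyadically with time translation. Also, because of the kink of $\xi|\xi|$ at $\xi=0$, the low-frequency kernel has only an integrable majorant, not a Schwartz one (and not merely because $\omega$ is bounded). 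That integrable majorant is exactly what \cref{prop:kernel} supplies and is all your argument uses.
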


\begin{proof}
Apply \cref{thm:maximal} to every dyadic component $P_N$, $N\ge2$,
of the homogeneous term.  For $P_1$, the same $TT^*$ argument with
$\lambda=\mu=1$ gives $A_x+A_y\lesssim1$, and hence the corresponding
estimate without a frequency loss.  For the Duhamel term, Minkowski's
inequality and time translation give
\[
 M_T\left(\int_0^tU(t-t')P_NF(t')\dd t'\right)
 \lesssim_\eps N^{1/2+\eps}\norm{P_NF}_{L_T^1L^2}.
\]
No Christ--Kiselev argument is needed: the forcing is measured in
$L_T^1L^2$, and the time-truncated supremum is dominated pointwise
before Minkowski is applied.
Summing in $N$ with an arbitrarily small additional Sobolev loss proves \eqref{eq:retarded-max}.
\end{proof}

\section{Refined Strichartz estimates}\label{sec:refined}

The estimate in this section is obtained by a frequency-dependent
partition of the time interval.  This device goes back to the refined
Strichartz argument of Kenig \cite{KenigKPI} and was used systematically for
fractional KP equations by Linares, Pilod, and Saut
\cite[Lemma 4.11]{LinaresPilodSaut}.  Their proof is based on a
Littlewood--Paley decomposition in the dispersive variable and an
$L_t^1L^\infty$ estimate.  Here we adapt the same organization to
isotropic BO--ZK frequency blocks and to the norm
$L_T^2L^\infty_{x,y}$.  We keep the short-time scale free in the next
lemma and optimize it afterward.

A minor point in the summation deserves attention.  A bound involving
$\norm{P_Nw}_{L_T^\infty L^2}$ cannot simply be square-summed in $N$,
because the maximizing time may depend on the frequency.  We avoid
this issue as follows.  On intervals whose length is the prescribed
short-time scale, the base time in Duhamel's formula is selected by
averaging the $L^2$ norm.  Frequencies for which the whole interval
$[0,T]$ is shorter than that scale are treated directly from the
initial time.  This gives the two-case estimate below and permits a
standard Sobolev square summation.

\begin{lemma}
\label{lem:refined-dyadic}
Let $N\ge2$ be dyadic, let $(p,q)$ be BO--ZK admissible in the sense
of \eqref{eq:admissible}, and let $0\le\vartheta\le1$.  Set
\begin{equation}\label{eq:ab-vartheta}
 a_\vartheta(p)=1+\frac2q+\frac\vartheta p,
 \qquad
 b_\vartheta(p)=1+\frac2q-\vartheta+\frac\vartheta p.
\end{equation}
If $w_N=P_Nw$ and $F_N=P_NF$, where $w$ solves \eqref{eq:linear} on
$[0,T]$, $0<T\le1$, then the following estimates hold.

If $T\ge N^{-\vartheta}$, then
\begin{equation}\label{eq:refined-dyadic-long}
 \norm{\nabla w_N}_{L_T^2L^\infty}
 \lesssim
 N^{a_\vartheta(p)}\norm{w_N}_{L_T^2L^2}
 +N^{b_\vartheta(p)}\norm{F_N}_{L_T^2L^2}.
\end{equation}
If $T<N^{-\vartheta}$, then
\begin{equation}\label{eq:refined-dyadic-short}
 \norm{\nabla w_N}_{L_T^2L^\infty}
 \lesssim
 N^{a_\vartheta(p)-\vartheta/2}\norm{w_N(0)}_2
 +N^{b_\vartheta(p)}\norm{F_N}_{L_T^2L^2}.
\end{equation}
In particular, in both cases,
\begin{equation}\label{eq:refined-dyadic-simple}
 \norm{\nabla w_N}_{L_T^2L^\infty}
 \lesssim
 N^{a_\vartheta(p)}\norm{w_N}_{L_T^\infty L^2}
 +N^{b_\vartheta(p)}\norm{F_N}_{L_T^2L^2}.
\end{equation}
The implicit constants are independent of $N$ and $T$.
\end{lemma}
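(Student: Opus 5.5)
The plan is the frequency-dependent time partition already described before the lemma. On each short interval I run Duhamel from a well-chosen base time, and combine four ingredients: H\"older in time, Bernstein on the isotropic block, \cref{prop:strichartz} for the free part, and Minkowski for the Duhamel part. Throughout, $(p,q)$ is fixed and admissible, $w_N\in C([0,T];L^2)$ since $w$ solves \eqref{eq:linear}, and all constants come from \cref{prop:strichartz}, which is uniform over intervals of length at most one.

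\emph{Basic estimate on one interval.} Let $I\subset[0,T]$ be an interval and $t_I\in I$. Then on $I$
\[
 w_N(t)=U(t-t_I)w_N(t_I)+\int_{t_I}^tU(t-t')F_N(t')\dd t'.
\]
Since $\nabla P_N$ costs $N$ and Bernstein $L^q\to L^\infty$ on a two-dimensional block of size $N$ costs $N^{2/q}$, H\"older in time gives
\[
 \norm{\nabla w_N}_{L^2_IL^\infty}\lesssim |I|^{\frac12-\frac1p}N^{1+\frac2q}\norm{w_N}_{L^p_IL^q}.
\]
\cref{prop:strichartz} bounds the free part by $\norm{w_N(t_I)}_2$. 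Minkowski's inequality in $t'$ together with \cref{prop:strichartz} applied to each $U(\cdot-t')F_N(t')$ bounds the Duhamel part by
\[
 \norm{F_N}_{L^1_IL^2}\le|I|^{1/2}\norm{F_N}_{L^2_IL^2}.
\]
Hence
\[
 \norm{\nabla w_N}_{L^2_IL^\infty}\lesssim N^{1+\frac2q}|I|^{\frac12-\frac1p}\Bigl(\norm{w_N(t_I)}_2+|I|^{1/2}\norm{F_N}_{L^2_IL^2}\Bigr).
\]

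\emph{Long case $T\ge N^{-\vartheta}$.} Partition $[0,T]$ into intervals $I_j$ with $N^{-\vartheta}\le|I_j|\le2N^{-\vartheta}$; this is possible precisely because $T\ge N^{-\vartheta}$. By continuity of $t\mapsto\norm{w_N(t)}_2$, choose $t_j\in I_j$ with
\[
 \norm{w_N(t_j)}_2^2\le|I_j|^{-1}\norm{w_N}_{L^2_{I_j}L^2}^2\le N^\vartheta\norm{w_N}^2_{L^2_{I_j}L^2}.
\]
Inserting $|I_j|\simeq N^{-\vartheta}$ into the basic estimate, the free term carries the power
\[
 1+\tfrac2q-\vartheta\bigl(\tfrac12-\tfrac1p\bigr)+\tfrac\vartheta2=a_\vartheta(p),
\]
and the forcing term carries the power
\[
 1+\tfrac2q-\vartheta\bigl(\tfrac12-\tfrac1p\bigr)-\tfrac\vartheta2=b_\vartheta(p).
\]
Squaring and summing over $j$ gives \eqref{eq:refined-dyadic-long}, because the right-hand sides are $\ell^2$-sums of $L^2_{I_j}$ norms over a partition.

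\emph{Short case $T<N^{-\vartheta}$.} Use the single interval $I=[0,T]$ with $t_I=0$. Since $T<N^{-\vartheta}$, we have $T^{\frac12-\frac1p}\le N^{-\vartheta(\frac12-\frac1p)}$. This gives the exponent $a_\vartheta-\vartheta/2$ on $\norm{w_N(0)}_2$. For the forcing term, $T^{1-\frac1p}\le N^{-\vartheta(1-\frac1p)}$ gives the exponent $b_\vartheta$. This is \eqref{eq:refined-dyadic-short}.

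\emph{The combined bound \eqref{eq:refined-dyadic-simple}.} In the long case, $\norm{w_N}_{L^2_TL^2}\le T^{1/2}\norm{w_N}_{L^\infty_TL^2}\le\norm{w_N}_{L^\infty_TL^2}$. In the short case, $N^{a_\vartheta-\vartheta/2}\norm{w_N(0)}_2\le N^{a_\vartheta}\norm{w_N}_{L^\infty_TL^2}$.

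There is no serious obstacle. The only points needing care are the following.
\begin{itemize}
\item The partition must have lengths comparable to $N^{-\vartheta}$ from below, so that the averaging step costs only $N^{\vartheta}$; this is exactly why the case $T<N^{-\vartheta}$ is split off.
\item The Strichartz constant must be independent of the position of $I_j$, which holds since $|I_j|\le1$ by \cref{prop:strichartz}.
\item Minkowski must be applied before restricting to $t'<t$. This avoids any Christ--Kiselev step, since the forcing is taken in $L^1_IL^2$.
\end{itemize}
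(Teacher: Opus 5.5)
Your proposal is correct and follows the paper's proof essentially step for step: Bernstein plus H\"older on each short interval, a base time chosen by averaging $\norm{w_N(t)}_2^2$, \cref{prop:strichartz} for the free part, Minkowski for the Duhamel part, and a single interval based at $t=0$ in the short case. The only differences are cosmetic. You take partition lengths in $[N^{-\vartheta},2N^{-\vartheta}]$ instead of $[\tfrac12N^{-\vartheta},N^{-\vartheta}]$, and you apply Cauchy--Schwarz to the forcing within each interval before summing. Neither changes the exponents.
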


\begin{proof}
Write $\delta_N=N^{-\vartheta}$.  Since $w_N$ is supported where
$|\zeta|\simeq N$, Bernstein's inequality in two space dimensions
gives
\begin{equation}\label{eq:bernstein-short-interval}
 \norm{\nabla w_N(t)}_{L^\infty}
 \lesssim N^{1+2/q}\norm{w_N(t)}_{L^q}.
\end{equation}
Also, $p>8/3>2$, and therefore, for any time interval $I$,
\begin{equation}\label{eq:holder-short-interval}
 \norm{\nabla w_N}_{L_I^2L^\infty}
 \lesssim
 N^{1+2/q}|I|^{\frac12-\frac1p}
 \norm{w_N}_{L_I^pL^q}.
\end{equation}

We first suppose that $T\ge\delta_N$.  Partition $[0,T]$ into
consecutive intervals $I_j$ whose lengths satisfy
\begin{equation}\label{eq:comparable-short-intervals}
 \frac12\delta_N\le |I_j|\le\delta_N.
\end{equation}
By averaging, one may choose $t_j\in I_j$ such that
\begin{equation}\label{eq:average-base-time}
 \norm{w_N(t_j)}_2^2
 \le \frac{2}{\delta_N}
 \int_{I_j}\norm{w_N(t)}_2^2\dd t.
\end{equation}
For $t\in I_j$, the group property gives the two-sided Duhamel formula
\begin{equation}\label{eq:duhamel-average-base}
 w_N(t)=U(t-t_j)w_N(t_j)
 +\int_{t_j}^{t}U(t-t')F_N(t')\dd t'.
\end{equation}
The orientation of the last integral is immaterial in the estimates.
By time translation, restriction to $I_j$, and
\cref{prop:strichartz},
\begin{equation}\label{eq:homogeneous-short-interval}
 \norm{U(t-t_j)w_N(t_j)}_{L_{I_j}^pL^q}
 \lesssim \norm{w_N(t_j)}_2.
\end{equation}
Minkowski's inequality and the same homogeneous estimate imply
\begin{align}
 &\norm{\int_{t_j}^{t}U(t-t')F_N(t')\dd t'}_{L_{I_j}^pL^q}
 \notag\\
 &\qquad\lesssim \int_{I_j}\norm{F_N(t')}_2\dd t'.
 \label{eq:inhomogeneous-short-interval}
\end{align}
No Christ--Kiselev argument is required: for each fixed $t'$, the
relevant forward or backward time interval is a restriction of a
time translate of the homogeneous Strichartz estimate.

Combining
\eqref{eq:holder-short-interval}-\eqref{eq:inhomogeneous-short-interval}
and using \eqref{eq:comparable-short-intervals}, we obtain
\begin{equation}\label{eq:one-short-interval}
 \norm{\nabla w_N}_{L_{I_j}^2L^\infty}
 \lesssim
 N^{1+\frac2q}\delta_N^{\frac12-\frac1p}
 \left(
   \norm{w_N(t_j)}_2
   +\int_{I_j}\norm{F_N(t')}_2\dd t'
 \right).
\end{equation}
Square and sum over the disjoint intervals.  For the homogeneous
part, \eqref{eq:average-base-time} gives
\begin{align}
 &N^{1+\frac2q}\delta_N^{\frac12-\frac1p}
 \left(\sum_j\norm{w_N(t_j)}_2^2\right)^{1/2}
 \notag\\
 &\qquad\lesssim
 N^{1+\frac2q}\delta_N^{-1/p}
 \norm{w_N}_{L_T^2L^2}
 =N^{a_\vartheta(p)}\norm{w_N}_{L_T^2L^2}.
 \label{eq:homogeneous-summed}
\end{align}
For the forcing part, Cauchy--Schwarz on each $I_j$ yields
\[
 \left(\int_{I_j}\norm{F_N(t')}_2\dd t'\right)^2
 \le |I_j|\int_{I_j}\norm{F_N(t')}_2^2\dd t'.
\]
Consequently,
\begin{align}
 &N^{1+\frac2q}\delta_N^{\frac12-\frac1p}
 \left[
   \sum_j
   \left(\int_{I_j}\norm{F_N(t')}_2\dd t'\right)^2
 \right]^{1/2}
 \notag\\
 &\qquad\lesssim
 N^{1+\frac2q}\delta_N^{1-\frac1p}
 \norm{F_N}_{L_T^2L^2}
 =N^{b_\vartheta(p)}\norm{F_N}_{L_T^2L^2}.
 \label{eq:forcing-summed}
\end{align}
This proves \eqref{eq:refined-dyadic-long}.

Suppose now that $T<\delta_N$.  We use the entire interval as one
piece and base Duhamel's formula at $0$.  Equations
\eqref{eq:holder-short-interval},
\eqref{eq:homogeneous-short-interval}, and
\eqref{eq:inhomogeneous-short-interval} give
\begin{align}
 \norm{\nabla w_N}_{L_T^2L^\infty}
 &\lesssim
 N^{1+\frac2q}T^{\frac12-\frac1p}\norm{w_N(0)}_2
 \notag\\
 &\quad+
 N^{1+\frac2q}T^{\frac12-\frac1p}
 \int_0^T\norm{F_N(t')}_2\dd t'.
 \label{eq:short-total-time}
\end{align}
Since $T<N^{-\vartheta}$ and $1/2-1/p>0$,
\[
 T^{\frac12-\frac1p}
 \le N^{-\vartheta(\frac12-\frac1p)}.
\]
After Cauchy--Schwarz in the forcing integral, we also have
\[
 T^{1-\frac1p}
 \le N^{-\vartheta(1-\frac1p)}.
\]
Substitution in \eqref{eq:short-total-time} proves
\eqref{eq:refined-dyadic-short}.  Finally,
\eqref{eq:refined-dyadic-simple} follows from
$\norm{w_N}_{L_T^2L^2}\le T^{1/2}\norm{w_N}_{L_T^\infty L^2}$ in
the first case and from
$\norm{w_N(0)}_2\le\norm{w_N}_{L_T^\infty L^2}$ in the second.
\end{proof}

\begin{proposition}\label{prop:refined}
Let $w$ solve \eqref{eq:linear} on $[0,T]$, $0<T\le1$.  For every
$\eps>0$,
\begin{equation}\label{eq:refined}
 \norm{\nabla w}_{L_T^2L^\infty}
 \lesssim_\eps
 \norm{J^{19/16+\eps}w}_{L_T^\infty L^2}
 +\norm{J^{11/16+\eps}F}_{L_T^2L^2}
 +\norm w_{L_T^\infty L^2}
 +\norm F_{L_T^2L^2}.
\end{equation}
\end{proposition}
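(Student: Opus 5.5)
The plan is to obtain \eqref{eq:refined} from \cref{lem:refined-dyadic} by fixing the short-time scale at $\vartheta=1/2$, choosing $p$ close to $8/3$, and summing the dyadic pieces by the triangle inequality, spending part of the $\eps$ to make the sum converge.

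\emph{Choice of parameters.} With $\vartheta=1/2$, the admissibility relation \eqref{eq:admissible} gives $2/q=1-8/(3p)$. Substituting this into \eqref{eq:ab-vartheta} yields
\[
 a_{1/2}(p)=2-\frac{13}{6p},\qquad b_{1/2}(p)=\frac32-\frac{13}{6p},
\]
which is \eqref{eq:ab-intro}. As $p\downarrow 8/3$ these tend to $19/16$ and $11/16$. Given $\eps>0$, I fix an admissible pair $(p,q)$ with $p>8/3$ so close to $8/3$ that
\[
 a_{1/2}(p)\le\tfrac{19}{16}+\tfrac\eps2,\qquad b_{1/2}(p)\le\tfrac{11}{16}+\tfrac\eps2 .
\]
The implicit constant then depends only on $\eps$ through this choice of $p$.

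\emph{Decomposition and dyadic estimate.} Write $w=P_1w+\sum_{N\ge2}P_Nw$. Since the projections are Fourier multipliers, they commute with \eqref{eq:linear}, so $w_N=P_Nw$ solves \eqref{eq:linear} with forcing $F_N=P_NF$.
\begin{itemize}
\item For the low-frequency piece, Bernstein's inequality gives $\norm{\nabla P_1w(t)}_{L^\infty}\lesssim\norm{w(t)}_2$. Since $T\le1$, this yields $\norm{\nabla P_1 w}_{L_T^2L^\infty}\lesssim\norm w_{L_T^\infty L^2}$, which is the third term on the right of \eqref{eq:refined}. The last term $\norm F_{L^2_TL^2}$ is harmless and only needs to be kept for bookkeeping.
\item For each $N\ge2$, apply \eqref{eq:refined-dyadic-simple}. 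Using $\norm{P_Nw}_{L_T^\infty L^2}\lesssim N^{-s}\norm{J^sw}_{L_T^\infty L^2}$ with $s=19/16+\eps$, and the analogous bound for $F$ with $s=11/16+\eps$, one gets
\[
 \norm{\nabla w_N}_{L_T^2L^\infty}
 \lesssim N^{-\eps/2}\Bigl(\norm{J^{19/16+\eps}w}_{L_T^\infty L^2}
 +\norm{J^{11/16+\eps}F}_{L_T^2L^2}\Bigr).
\]
\end{itemize}

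\emph{Summation.} Sum over dyadic $N$ using the triangle inequality in $L_T^2L^\infty$. The geometric factor $\sum_N N^{-\eps/2}\lesssim_\eps1$ closes the estimate.

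\emph{Main point of care.} The only delicate point is that $\norm{P_Nw}_{L_T^\infty L^2}$ cannot be square-summed, since the maximizing time may depend on $N$, as the paper noted before \cref{lem:refined-dyadic}. The route above avoids this entirely: the $\eps/2$ of spare regularity makes crude $\ell^1$ summation possible, each term being bounded by the full $L_T^\infty H^{19/16+\eps}$ norm. So I expect no genuine obstacle. The only substantive step is verifying the algebra $a_{1/2}(p)\to19/16$, $b_{1/2}(p)\to11/16$, and that the admissibility constraint $p>8/3$ allows approaching this endpoint, which it does.
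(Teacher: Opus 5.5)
Your proof is correct and follows the paper's argument. You use \cref{lem:refined-dyadic} with $\vartheta=1/2$, choose $p$ close enough to $8/3$ that $a(p)$ and $b(p)$ lie within a fraction of $\eps$ of $19/16$ and $11/16$, handle $P_1w$ by Bernstein, and spend the spare $\eps$ on the dyadic summation. The only difference is cosmetic: the paper splits into long and short frequencies via \eqref{eq:refined-dyadic-long}--\eqref{eq:refined-dyadic-short} and sums by Cauchy--Schwarz. You instead sum the combined bound \eqref{eq:refined-dyadic-simple} directly in $\ell^1$, using $\norm{P_Nw}_{L_T^\infty L^2}\lesssim N^{-s}\norm{J^sw}_{L_T^\infty L^2}$. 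That is equally valid, since the stated estimate involves only $L_T^\infty$ norms of $w$.
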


\begin{proof}
Apply \cref{lem:refined-dyadic} with $\vartheta=1/2$.  The two
frequency exponents in \eqref{eq:ab-vartheta} become
\begin{equation}\label{eq:refined-exponents-general}
 a(p)=1+\frac2q+\frac1{2p},
 \qquad
 b(p)=1+\frac2q-\frac12+\frac1{2p},
 \qquad a(p)-b(p)=\frac12.
\end{equation}
This is the choice compatible with the half-derivative gain required
by the subsequent microlocal smoothing argument.
By the admissibility relation \eqref{eq:admissible},
\[
 \frac2q=1-\frac8{3p},
\]
and hence
\begin{equation}\label{eq:refined-exponents-optimized}
 a(p)=2-\frac{13}{6p},
 \qquad
 b(p)=\frac32-\frac{13}{6p}.
\end{equation}
As $p\downarrow8/3$,
\begin{equation}\label{eq:refined-endpoint-limits}
 a(p)\downarrow\frac{19}{16},
 \qquad
 b(p)\downarrow\frac{11}{16}.
\end{equation}

Fix $\eps>0$ and choose $p>8/3$ sufficiently close to $8/3$ that
\begin{equation}\label{eq:p-close-endpoint}
 a(p)<\frac{19}{16}+\frac\eps4,
 \qquad
 b(p)<\frac{11}{16}+\frac\eps4.
\end{equation}
Split the high frequencies into
\[
 \mathcal N_{\mathrm{long}}(T)
 =\{N\ge2:T\ge N^{-1/2}\},
 \qquad
 \mathcal N_{\mathrm{short}}(T)
 =\{N\ge2:T<N^{-1/2}\}.
\]
For $N\in\mathcal N_{\mathrm{long}}(T)$, use
\eqref{eq:refined-dyadic-long}; for
$N\in\mathcal N_{\mathrm{short}}(T)$, use
\eqref{eq:refined-dyadic-short}.  The triangle inequality in
$L_T^2L^\infty$ yields
\begin{align}
 \norm{\nabla P_{\ge2}w}_{L_T^2L^\infty}
 \lesssim{}&
 \sum_{N\in\mathcal N_{\mathrm{long}}(T)}
 N^{a(p)}\norm{w_N}_{L_T^2L^2}
 \notag\\
 &+
 \sum_{N\in\mathcal N_{\mathrm{short}}(T)}
 N^{a(p)-1/4}\norm{w_N(0)}_2
 \notag\\
 &+
 \sum_{N\ge2}N^{b(p)}\norm{F_N}_{L_T^2L^2}.
 \label{eq:refined-three-sums}
\end{align}

Insert the summable dyadic weight $N^{-\eps/4}$ in each sum and use
Cauchy--Schwarz.  The first sum is bounded by
\begin{align*}
 \left(\sum_{N\ge2}N^{-\eps/2}\right)^{1/2}
 \left(\sum_{N\ge2}
 N^{2a(p)+\eps/2}\norm{w_N}_{L_T^2L^2}^2\right)^{1/2}
 \lesssim_\eps
 \norm{J^{19/16+\eps}w}_{L_T^2L^2}.
\end{align*}
Since $T\le1$, this is at most
$\norm{J^{19/16+\eps}w}_{L_T^\infty L^2}$.  The second sum has an
additional gain $N^{-1/4}$ and therefore satisfies
\[
 \sum_{N\in\mathcal N_{\mathrm{short}}(T)}
 N^{a(p)-1/4}\norm{w_N(0)}_2
 \lesssim_\eps \norm{J^{19/16+\eps}w(0)}_2
 \le \norm{J^{19/16+\eps}w}_{L_T^\infty L^2}.
\]
Similarly,
\[
 \sum_{N\ge2}N^{b(p)}\norm{F_N}_{L_T^2L^2}
 \lesssim_\eps
 \norm{J^{11/16+\eps}F}_{L_T^2L^2}.
\]
Thus the small Sobolev loss in \eqref{eq:refined} is used only for
the dyadic summation of the $L^\infty$ bounds; no vector-valued
Littlewood--Paley estimate with an $L^\infty$ target is invoked.

Finally, Bernstein and $T\le1$ give
\[
 \norm{\nabla P_1w}_{L_T^2L^\infty}
 \lesssim \norm{P_1w}_{L_T^\infty L^2}.
\]
This is controlled by the third term on the right of
\eqref{eq:refined}; the final low-frequency forcing term is retained
for the inhomogeneous formulation.  Combining the low- and
high-frequency estimates proves \eqref{eq:refined}.
\end{proof}

\section{Frequency-adapted microlocal smoothing}\label{sec:smoothing}

This section proves the nonlinear smoothing estimate in the continuous norms that pair with \eqref{eq:M-def}.  The use of frequency-adapted weights is important: unit-width strip estimates would require a discrete spatial maximal function, whereas the weights below directly recover $L_x^\infty$ or $L_y^\infty$ after a vector-valued Bernstein argument.

\subsection{Frequency-adapted weights}

Choose an even function $\widehat\psi_0\in C_c^\infty(\R)$, supported in
$[-\kappa_0,\kappa_0]$, such that its inverse Fourier transform
$\psi_0$ is real, $\psi_0(0)=1$, and
\begin{equation}\label{eq:w-lower}
 \psi_0(y)\ge c_\psi>0\qquad (|y|\le2).
\end{equation}
This is achieved by taking $\kappa_0>0$ sufficiently small.  Then
$\psi_0$ is
Schwartz and band limited.  Set
\begin{equation}\label{eq:aprime-square}
 a(y)=\int_{-\infty}^{y}\psi_0(r)^2\dd r.
\end{equation}
Thus $a$ is bounded and increasing, $a'=\psi_0^2\ge0$, $a'$ is Schwartz,
and $\widehat{a'}$ is compactly supported.  The factorization
$a'=\psi_0^2$ is used in \cref{prop:positive,prop:crossterm}; compact
Fourier support is used in \cref{lem:hilbert-weight}.  Fix a large
constant $R\gg1$.  For an $x$ frequency $\lambda\ge1$ and $x_0\in\R$, set
\begin{equation}\label{eq:scaled-x-weight}
 a_{\lambda,x_0}(x)=a\left(\frac{\lambda(x-x_0)}R\right).
\end{equation}
For a $y$ frequency $\mu\ge1$, define $a_{\mu,y_0}$ analogously.

\begin{lemma}\label{lem:local-mean}
Let $H$ be a Hilbert space.  If $f:\R\to H$ has Fourier support in $|\xi|\le C\lambda$, then
\begin{equation}\label{eq:local-mean-x}
 \sup_{x_0\in\R}\norm{f(x_0)}_H^2
 \lesssim_R
 \lambda\sup_{x_0\in\R}
 \int_\R a'\left(\frac{\lambda(x-x_0)}R\right)
 \norm{f(x)}_H^2\dd x.
\end{equation}
The analogous statement holds in the $y$ variable.
\end{lemma}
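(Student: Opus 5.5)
We reduce to a reproducing formula plus Cauchy--Schwarz against the weight $a'=\psi_0^2$. Fix $\chi\in\Sscr(\R)$ with $\widehat\chi\equiv1$ on $[-C,C]$, and set $\chi_\lambda(x)=\lambda\chi(\lambda x)$. The Fourier support hypothesis gives the reproducing identity $f=\chi_\lambda*f$ as an $H$-valued Bochner integral. Hence, for every $x_0$,
\[
 \norm{f(x_0)}_H\le\int_\R\lambda|\chi(\lambda(x_0-x))|\,\norm{f(x)}_H\dd x .
\]
The task is then to bound this convolution by the localized weighted $L^2$ averages on the right of \eqref{eq:local-mean-x}. We do this by decomposing $\R$ into windows adapted to the weight.

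\textbf{Decomposition into windows.} Partition $\R$ into intervals $Q_k=x_0+\tfrac{R}{\lambda}[k,k+1)$, $k\in\Z$. On $Q_k$, the rescaled variable $\lambda(x-x_k)/R$, with $x_k=x_0+kR/\lambda$, lies in $[0,1]$. By \eqref{eq:w-lower}, $a'=\psi_0^2\ge c_\psi^2$ on $[-2,2]$, so
\[
 \int_{Q_k}\norm{f}_H^2\dd x\le c_\psi^{-2}\int_\R a'\Bigl(\frac{\lambda(x-x_k)}R\Bigr)\norm{f(x)}_H^2\dd x\le c_\psi^{-2}S,
\]
where $S$ denotes the supremum over base points appearing on the right of \eqref{eq:local-mean-x}.

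\textbf{Estimate on a single window.} On $Q_k$, the factor $|\chi(\lambda(x_0-x))|$ is bounded by $C_M(1+|k|R)^{-M}$. Cauchy--Schwarz on $Q_k$, whose length is $R/\lambda$, then gives
\[
 \int_{Q_k}\lambda|\chi(\lambda(x_0-x))|\,\norm{f}_H\dd x\lesssim_M\lambda(1+|k|R)^{-M}(R/\lambda)^{1/2}S^{1/2}c_\psi^{-1}.
\]

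\textbf{Summation and conclusion.} Summing over $k$ gives
\[
 \norm{f(x_0)}_H\lesssim_R\lambda^{1/2}S^{1/2}.
\]
Squaring and taking the supremum in $x_0$ proves \eqref{eq:local-mean-x}. The $y$ statement is identical with $\mu$ in place of $\lambda$.

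The only delicate point is the decay bookkeeping in the summation step, together with the fact that the supremum on the right of \eqref{eq:local-mean-x} is taken over all base points, including the shifted ones $x_k$. The compact Fourier support of $\widehat{a'}$ plays no role here; only the positivity \eqref{eq:w-lower} is used. The dependence on $R$ is harmless, since $R$ is fixed.
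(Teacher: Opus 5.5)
Your proof is correct. It shares the paper's reproducing formula and the lower bound $a'=\psi_0^2\ge c_\psi^2$ on a window, but it handles the tail of the convolution differently.

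\textbf{The paper's route.} The paper splits the reproducing integral into the single window $I_{x_0}=\{|x-x_0|\le R/\lambda\}$ and its complement. The near part is bounded by Cauchy--Schwarz with $\norm{\rho_\lambda}_{L^2}=\lambda^{1/2}\norm{\rho}_{L^2}$. The far part is bounded crudely by $\mathfrak m\int_{|r|>R}|\rho(r)|\dd r$, where $\mathfrak m=\sup_x\norm{f(x)}_H$. It then takes $R$ large enough that this factor is at most $1/2$ and absorbs it into the left-hand side. That absorption requires $\mathfrak m<\infty$ in advance, which is why the paper first treats Schwartz functions and then approximates. It is also why $R$ has to be chosen large.

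\textbf{Your route.} You tile all of $\R$ by windows of length $R/\lambda$. You control each window's $L^2$ mass by the supremum over the shifted centres $x_k$, which the right-hand side of \eqref{eq:local-mean-x} allows. You then sum the pointwise Schwartz decay of the kernel over the windows. This gives the estimate directly, with no absorption step, no a priori finiteness of $\sup_x\norm{f(x)}_H$, and no approximation argument. It also holds for every fixed $R>0$, with a constant depending on $R$. The price is that you use pointwise decay of the kernel rather than only the smallness of its $L^1$ tail.

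\textbf{Minor correction.} For $k<0$, the window $Q_k$ satisfies only $|\lambda(x-x_0)|\ge R(|k|-1)$, so the decay factor should read $\bigl(1+R\max\{|k|-1,0\}\bigr)^{-M}$. For $k=-1$ it is just $O(1)$. This is equally summable, so the conclusion is unaffected.
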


\begin{proof}
It suffices first to consider $H$-valued Schwartz functions; the
general case follows by frequency-preserving approximation.  Choose
$\rho\in\mathcal S(\R)$ such that
\[
 \widehat\rho(\xi)=1\qquad (|\xi|\le C),
\]
and set $\rho_\lambda(x)=\lambda\rho(\lambda x)$.  The Fourier-support
hypothesis gives the reproducing formula
\begin{equation}\label{eq:local-mean-reproduction}
 f(x_0)=\int_\R \rho_\lambda(x_0-x)f(x)\dd x .
\end{equation}
Let
\[
 \mathfrak m=\sup_{x\in\R}\norm{f(x)}_H
 \quad\hbox{and}\quad
 I_{x_0}=\left\{x\in\R:|x-x_0|\le\frac R\lambda\right\}.
\]
Splitting \eqref{eq:local-mean-reproduction} over $I_{x_0}$ and its
complement, and applying the triangle inequality for the Bochner
integral, yields
\begin{align}
 \norm{f(x_0)}_H
 &\le
 \int_{I_{x_0}}|\rho_\lambda(x_0-x)|\norm{f(x)}_H\dd x
 \notag\\
 &\quad+
 \int_{\R\setminus I_{x_0}}
 |\rho_\lambda(x_0-x)|\norm{f(x)}_H\dd x .
 \label{eq:local-mean-split}
\end{align}
By Cauchy--Schwarz and the scaling of $\rho_\lambda$, the first term
on the right is bounded by
\begin{equation}\label{eq:local-mean-near}
 \norm{\rho_\lambda}_{L^2}
 \left(\int_{I_{x_0}}\norm{f(x)}_H^2\dd x\right)^{1/2}
 \le
 \lambda^{1/2}\norm\rho_{L^2}
 \left(\int_{I_{x_0}}\norm{f(x)}_H^2\dd x\right)^{1/2}.
\end{equation}
For the complementary term, the change of variables
$r=\lambda(x-x_0)$ gives
\begin{equation}\label{eq:local-mean-tail}
 \int_{\R\setminus I_{x_0}}
 |\rho_\lambda(x_0-x)|\norm{f(x)}_H\dd x
 \le \mathfrak m\int_{|r|>R}|\rho(r)|\dd r.
\end{equation}
Since $\rho$ is Schwartz, the fixed constant $R$ may be chosen large
enough that the last integral is at most $1/2$.  Taking the supremum in
$x_0$ in \eqref{eq:local-mean-split}, and then absorbing the resulting
$\mathfrak m/2$ term, gives
\begin{equation}\label{eq:local-mean-unweighted}
 \mathfrak m^2\lesssim_R
 \lambda\sup_{x_0\in\R}
 \int_{I_{x_0}}\norm{f(x)}_H^2\dd x.
\end{equation}
If $x\in I_{x_0}$, then
$|\lambda(x-x_0)/R|\le1$, and hence \eqref{eq:w-lower} and
\eqref{eq:aprime-square} imply
\[
 a'\left(\frac{\lambda(x-x_0)}R\right)
 =\psi_0\left(\frac{\lambda(x-x_0)}R\right)^2
 \ge c_\psi^2.
\]
Substituting this lower bound into \eqref{eq:local-mean-unweighted}
proves \eqref{eq:local-mean-x}.  The argument in the $y$ variable is
identical.  Only the triangle inequality for Bochner integrals and
Cauchy--Schwarz are used, so the proof applies without change to
Hilbert-valued functions.
\end{proof}

For a smooth solution and an exponent $\sigma>1$ define
\begin{align}
 S_\sigma(u;T)^2
 ={}&\sum_{N\ge2}\sum_{\tau=\pm}
 \norm{J^\sigma|v_x(D)|^{1/2}\chi_{x\tau,N}(D)P_Nu}_{L_x^\infty L_{y,T}^2}^2
 \notag\\
 &+\sum_{N\ge2}\sum_{\tau=\pm}
 \norm{J^\sigma|v_y(D)|^{1/2}\chi_{y\tau,N}(D)P_Nu}_{L_y^\infty L_{x,T}^2}^2.
 \label{eq:S-def}
\end{align}
The nonlinear smoothing estimate (\cref{prop:smoothing}) is proved for
every $1<\sigma<s$; the strict inequality $\sigma<s$ absorbs the
logarithmic losses of \cref{lem:tangential}.  In the sequel we write
$z_N=P_NJ^\sigma u$; the exponent $\sigma$ is fixed in
\cref{prop:coupled} as $\sigma=s-\eps_0$.  For the $x$ charts, the
weight scale is $\lambda\simeq N$ (every $P_N$ function has $x$
frequency $O(N)$, so \cref{lem:local-mean} applies at that single
scale); on the $y$ charts, \eqref{eq:ygeometry} fixes the scale
$\mu\simeq N^{1/2}$.  The low-frequency component $P_1u$ is
not included in \eqref{eq:S-def}; throughout this section it is
estimated directly by Bernstein's inequality and the energy norm.

We now fix the geometric constants in the order used below.  Choose
$K\gg1$.  The proof of the resonance bound uses only that, on a band
of width $cN$, the error $v_x\alpha$ is bounded by
$C(c/K)N^{3/2}|\beta|$.  We therefore fix a number
$c_{\mathrm{res}}(K)>0$ such that this error, together with the fixed
enlargement errors, is at most one quarter of the lower bound for
$|v_y\beta|$ whenever $c\le c_{\mathrm{res}}(K)$.  Next choose the
constant $c_0$ in \eqref{eq:xcharts}-\eqref{eq:ycharts} so small
that
\begin{equation}\label{eq:c0-resonance}
 32c_0\le c_{\mathrm{res}}(K),
\end{equation}
and finally choose
\begin{equation}\label{eq:constant-order}
 0<\delta\ll\min\{c_0^2,(4K^2)^{-1}\}.
\end{equation}

\begin{definition}\label{def:chart-ladder}
For every chart $\nu$ and every $N\ge2$ choose
symbols
\[
 \Theta_{\nu,N,0},\Theta_{\nu,N,1},
 \Theta_{\nu,N,2},\Theta_{\nu,N,3}
\]
with $\Theta_{\nu,N,0}=\chi_{\nu,N}$ and the following properties.
Each symbol obeys \eqref{eq:chart-symbol} and is supported where the
relevant velocity has the fixed sign of the chart.  For
$\ell=0,1,2$, $\Theta_{\nu,N,\ell+1}=1$ on an
$O(\delta N)$ anisotropic neighbourhood of the transition support of
$\Theta_{\nu,N,\ell}$.  On the $y$ charts the successive supports
are contained in the bands with parameters
$4c_0,8c_0,16c_0,32c_0$.  On the $x$ charts, the auxiliary symbols
$\Theta_{\nu,N,\ell}$, $\ell\ge1$, are confined to the corresponding
enlarged transition bands, where $|v_x|\simeq N$.  All constants are
uniform in $\ell$.
\end{definition}

The sequence contains four fixed levels.  It is used only for the
commutators generated by the transition of the microlocal cutoffs: the
estimate at level $\ell$ is expressed in terms of the positive quantity at
level $\ell+1$, while the last level is bounded directly.

\subsection{Positive commutators}

We first record the only point at which the kink of $\xi|\xi|$ at
$\xi=0$ enters.

\begin{lemma}\label{lem:hilbert-weight}
Let $b\in C^\infty(\R)$ be bounded with $b'\in\Sscr(\R)$.  On
Schwartz functions,
\begin{equation}\label{eq:hilbert-weight}
 i[-D_x|D_x|,b]
 =-\bigl(b'|D_x|+|D_x|b'\bigr)+\mathcal R_b,
\end{equation}
where the Fourier kernel of the remainder is
\begin{equation}\label{eq:hilbert-kernel}
 \widehat{\mathcal R_b f}(\xi)
 =i\int_\R
 \bigl(\xi|\xi'|-\xi'|\xi|\bigr)
 \widehat b(\xi-\xi')\widehat f(\xi')\dd\xi'.
\end{equation}
The kernel vanishes when $\xi$ and $\xi'$ have the same sign, and
\begin{equation}\label{eq:hilbert-L2}
 \norm{\mathcal R_b}_{L^2\to L^2}
 \le \frac12\norm{\widehat{b''}}_{L^1}.
\end{equation}
For $b(x)=a(\lambda(x-x_0)/R)$, with $a$ defined by
\eqref{eq:aprime-square}, the remainder is supported in
\begin{equation}\label{eq:hilbert-lowxi}
 |\xi-\xi'|\lesssim \lambda/R,
 \qquad
 \sgn\xi\ne\sgn\xi',
 \qquad
 \max\{|\xi|,|\xi'|\}\lesssim\lambda/R.
\end{equation}
Moreover, for every Hilbert-valued $f$,
\begin{align}\label{eq:hilbert-strip}
 |\ip{\mathcal R_bf}{f}|
 \lesssim{}&\left(\frac\lambda R\right)^2
 \sum_{d\in\Z}\langle d\rangle^{-2}
 \int_\R
 a'\left(\frac{\lambda(x-x_0-dR/\lambda)}R\right)
 \norm{P^x_{\lesssim\lambda/R}f(x)}_H^2\dd x.
\end{align}
\end{lemma}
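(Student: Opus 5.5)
The identity \eqref{eq:hilbert-weight} and the kernel formula \eqref{eq:hilbert-kernel} are pure Fourier bookkeeping, and I would do them first. With the paper's convention, multiplication by $b$ has Fourier kernel $(2\pi)^{-1}\widehat b(\xi-\xi')$ and $-D_x|D_x|$ has symbol $-\xi|\xi|$. The commutator $i[-D_x|D_x|,b]$ therefore has kernel
\[
 i\bigl(\xi'|\xi'|-\xi|\xi|\bigr)\widehat b(\xi-\xi'),
\]
up to the normalising factor. Since $\widehat{b'}(\zeta)=i\zeta\widehat b(\zeta)$, the operator $-(b'|D_x|+|D_x|b')$ has kernel $-i(\xi-\xi')(|\xi|+|\xi'|)\widehat b(\xi-\xi')$. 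Subtracting, the remainder kernel is $i\widehat b(\xi-\xi')$ multiplied by
\[
 \xi'|\xi'|-\xi|\xi|+(\xi-\xi')(|\xi|+|\xi'|)=\xi|\xi'|-\xi'|\xi|,
\]
which is \eqref{eq:hilbert-kernel}. If $\xi,\xi'$ have the same sign, this bracket is $\pm(\xi\xi'-\xi'\xi)=0$. If they have opposite signs, then $|\xi-\xi'|=|\xi|+|\xi'|$ and
\[
 \bigl|\xi|\xi'|-\xi'|\xi|\bigr|=2|\xi||\xi'|\le\tfrac12(\xi-\xi')^2 .
\]
Hence the kernel is dominated by $\tfrac12|\widehat{b''}(\xi-\xi')|$, a convolution kernel, and Schur's test (or Young's inequality) gives \eqref{eq:hilbert-L2}.

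For the support claims \eqref{eq:hilbert-lowxi} I would use $b(x)=a(\lambda(x-x_0)/R)$ and $b'=(\lambda/R)\,a'(\cdot)$. Since $\widehat{a'}=\widehat{\psi_0}*\widehat{\psi_0}$ is supported in $[-2\kappa_0,2\kappa_0]$, the distribution $\widehat b$ restricted to $\zeta\neq0$ is supported in $|\zeta|\lesssim\lambda/R$. At $\zeta=0$ there is a possible delta mass (from $b$ being bounded and not decaying), but the factor $\xi|\xi'|-\xi'|\xi|$ vanishes on the diagonal $\xi=\xi'$, so that mass contributes nothing. Combined with the opposite-sign restriction, $\max\{|\xi|,|\xi'|\}\le|\xi|+|\xi'|=|\xi-\xi'|\lesssim\lambda/R$, which is \eqref{eq:hilbert-lowxi}. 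In particular $\mathcal R_b=P^x_{\lesssim\lambda/R}\,\mathcal R_b\,P^x_{\lesssim\lambda/R}$, where the projection is a smooth multiplier equal to one on the support. By dilation invariance of the $L^1$ norm of Fourier transforms, $\|\widehat{b''}\|_{L^1}=(\lambda/R)^2\|\widehat{a''}\|_{L^1}$. Everything extends verbatim to $H$-valued $f$, because the kernel is scalar.

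For \eqref{eq:hilbert-strip}, set $g=P^x_{\lesssim\lambda/R}f$, so that $\langle\mathcal R_bf,f\rangle=\langle\mathcal R_bg,g\rangle$. Localize $g$ in physical space with the shifted weights
\[
 \phi_d(x)=a'\!\left(\frac{\lambda(x-x_0-dR/\lambda)}R\right),\qquad d\in\Z,
\]
which are $\gtrsim1$ on the corresponding strips of width $R/\lambda$ by \eqref{eq:w-lower}; thus $\sum_d\phi_d\simeq1$ after normalisation. Write $g=\sum_d\theta_dg$ with $\theta_d=\phi_d/\sum_e\phi_e$. The bilinear form then splits as $\sum_{d,d'}\langle\mathcal R_b\theta_dg,\theta_{d'}g\rangle$, and I would aim for an off-diagonal bound of the form
\[
 |\langle\mathcal R_b\theta_dg,\theta_{d'}g\rangle|\lesssim(\lambda/R)^2\langle d-d'\rangle^{-2}\|\theta_dg\|_{L^2_H}\|\theta_{d'}g\|_{L^2_H}.
\]
Then $ab\le\tfrac12(a^2+b^2)$ and summation over the relative index give \eqref{eq:hilbert-strip}, the factor $\langle d\rangle^{-2}$ appearing after re-indexing relative to the strip at $x_0$. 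Note that $\|\theta_dg\|^2\lesssim\int\phi_d\|g\|_H^2\,dx$ since $\theta_d\le\phi_d$ pointwise.

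The main obstacle is the physical-space decay of the kernel of $\mathcal R_b$. The multiplier $m(\xi,\xi')=(\xi|\xi'|-\xi'|\xi|)/(\xi-\xi')^2$ is bounded but jumps across $\xi=0$ and $\xi'=0$, so naive integration by parts does not give $\langle\lambda(x-x')/R\rangle^{-2}$ decay. To get it I would split each of $\xi,\xi'$ into its positive and negative half-lines. On the quadrant $\xi>0>\xi'$ the bracket equals $-2\xi\xi'$, a polynomial, so the kernel there is a Hilbert-type half-line projection $\mathbf 1_{\xi>0}$ composed with the smooth, compactly supported symbol $-2\xi\xi'\widehat b(\xi-\xi')$, composed with $\mathbf 1_{\xi'<0}$. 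The half-line projections only produce $|x-x'|^{-1}$ tails, and to restore decay I would use the fact that $g$ is frequency localised to $|\xi|\lesssim\lambda/R$. If that still falls short, the fallback is to accept the weaker off-diagonal bound $\langle d-d'\rangle^{-1-}$, provided the later application in \cref{prop:positive} tolerates it; but I would first try to make the $\langle d\rangle^{-2}$ weight work with the half-line decomposition above.
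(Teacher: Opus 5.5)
\paragraph{Where your proposal stands.}
Your derivation of \eqref{eq:hilbert-weight}--\eqref{eq:hilbert-lowxi} is correct and is the paper's argument. It uses the Fourier bookkeeping, the bound $2|\xi\xi'|\le\frac12(\xi-\xi')^2$ with Schur's test, the compact support of $\widehat{a'}$, and the cancellation of the singular part of $\widehat b$ on the diagonal.

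\paragraph{The gap is in \eqref{eq:hilbert-strip}.}
The off-diagonal bound you aim for, $(\lambda/R)^2\langle d-d'\rangle^{-2}\norm{\theta_dg}_2\norm{\theta_{d'}g}_2$, depends only on $d-d'$. After $ab\le\frac12(a^2+b^2)$ it sums to $(\lambda/R)^2\sum_d\norm{\theta_dg}_2^2\simeq(\lambda/R)^2\norm g_2^2$. That is just \eqref{eq:hilbert-L2} again, since $\norm{\widehat{b''}}_{L^1}=(\lambda/R)^2\norm{\widehat{a''}}_{L^1}$. No re-indexing produces the weight $\langle d\rangle^{-2}$ concentrated at the strip through $x_0$, and that weight is the entire content of \eqref{eq:hilbert-strip}. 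In \cref{prop:positive} it is what lets the kink term be bounded by $\frac{C}{RN}\sup_{x_0'}\norm{Q_{x,\nu,x_0'}z_N}_2^2$. By contrast, $(\lambda/R)^2\norm g_2^2$ is of size $(N/R)^2\norm{z_N}_2^2$, which can be neither absorbed nor put into $C_R\norm{z_N}_2^2$. Better decay in $d-d'$, or your fallback $\langle d-d'\rangle^{-1-}$, does not address this.

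\paragraph{The missing idea: localization at $x_0$ itself.}
$\mathcal R_b$ is localized at $x_0$ because $b$ is constant up to Schwartz errors away from $x_0$. Comparing Fourier kernels, $\mathcal R_b=-iD_x[\sgn(D_x),b]D_x$, so its physical kernel is $\frac1\pi\partial_x\partial_{x'}\frac{b(x')-b(x)}{x-x'}$. With $h=R/\lambda$, $X=(x-x_0)/h$ and $X'=(x'-x_0)/h$, it equals
\[
 K(x,x')=\frac1{\pi h^3}\left[\frac{2\bigl(a(X)-a(X')\bigr)}{(X-X')^3}-\frac{a'(X)+a'(X')}{(X-X')^2}\right].
\]
For $|X-X'|\le1$ it is bounded by $h^{-3}\sup_{|r-X|\le1}|a'''(r)|$. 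Splitting at $|X'|=|X|/2$ and by the sign of $X'$ gives $\int|K(x,x')|\dd x'\lesssim h^{-2}\langle X\rangle^{-2}$, and symmetrically in $x$. Hence
\[
 |\ip{\mathcal R_bg}{g}|\le\frac12\iint|K|\bigl(\norm{g(x)}_H^2+\norm{g(x')}_H^2\bigr)\lesssim h^{-2}\int\langle X\rangle^{-2}\norm{g(x)}_H^2\dd x,
\]
and $\langle X\rangle^{-2}\lesssim\sum_d\langle d\rangle^{-2}a'(X-d)$ by \eqref{eq:w-lower}.

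\paragraph{Comparison with the paper's route.}
The paper obtains this localization from a claimed product bound $|K|\le Ch^{-3}\langle X\rangle^{-2}\langle X'\rangle^{-2}$, via two integrations by parts in each frequency variable, followed by a strip decomposition in both variables. The formula above shows that product bound is too strong: as $X=-X'\to+\infty$, $K\sim\norm{\psi_0}_2^2/(4\pi h^3X^3)$. This reflects the fact that on $\{\xi>0>\xi'\}$ the amplitude is $\frac{2\xi|\xi'|}{\xi+|\xi'|}\widehat{b'}(\xi-\xi')$, which is homogeneous of degree one at the frequency origin. For the same reason, it is not the smooth symbol you describe either. The row/column-sum bound is what actually holds, and it is all that \eqref{eq:hilbert-strip} needs.
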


\begin{proof}
A direct Fourier computation gives \eqref{eq:hilbert-kernel}.  If
$\xi\xi'\ge0$, its multiplier is zero.  If the signs are opposite,
\[
 |\xi|\xi'|-\xi'|\xi||=2|\xi\xi'|
 \le\frac12|\xi-\xi'|^2,
\]
which proves \eqref{eq:hilbert-L2} by Schur's test.  For the scaled
weight, $b'=(\lambda/R)a'(\lambda(x-x_0)/R)$ and
$\widehat{a'}=\widehat\psi_0*\widehat\psi_0$ is compactly supported.
Away from the harmless point $\xi=\xi'$, the distribution
$\widehat b(\xi-\xi')$ therefore has the same support as
$\widehat{b'}$, giving the first condition in
\eqref{eq:hilbert-lowxi}.  Opposite signs then imply the last
condition.  Put $h=R/\lambda$ and rescale
$x=x_0+hX$, $y=x_0+hY$.  After the cancellation at $\xi=\xi'$, the rescaled
amplitude in \eqref{eq:hilbert-kernel} is compactly supported and is
piecewise smooth across the coordinate axes.  Its distributional
frequency derivatives through the order needed below are finite
measures: the factor
$\xi|\xi'|-\xi'|\xi|$ vanishes quadratically at the intersection of
the sign regions.  Two integrations by parts in each frequency
variable therefore give
\[
 |K_b(x,y)|\le Ch^{-3}
 \left\langle\frac{x-x_0}{h}\right\rangle^{-2}
 \left\langle\frac{y-x_0}{h}\right\rangle^{-2}.
\]
Schur's test gives the expected $h^{-2}$ operator size.  Decomposing
both variables into intervals centred at $x_0+dh$ and using
\eqref{eq:w-lower} to dominate their characteristic functions by
translated copies of $a'=\psi_0^2$ proves \eqref{eq:hilbert-strip}.
The argument is unchanged for Hilbert-valued functions.
\end{proof}

The following elementary identity fixes the sign convention used in
all weighted estimates.

\begin{lemma}\label{lem:weighted-energy}
Let $z$ solve
\begin{equation}\label{eq:weighted-linear-equation}
 z_t-i\omega(D)z=G
\end{equation}
on a time interval, and let $A$ be multiplication by a real bounded
function.  Then
\begin{equation}\label{eq:weighted-energy-identity}
 \frac{\dd}{\dd t}\ip{Az}{z}
 +\ip{i[\omega(D),A]z}{z}
 =2\operatorname{Re}\ip{AG}{z}.
\end{equation}
If $A=\tau a_*$ on a microlocal chart and
$\tau=\sgn v_\nu$ there, the principal symbol of the commutator term
on the left is
\begin{equation}\label{eq:weighted-positive-sign}
 \tau a_*'v_\nu=a_*'|v_\nu|\ge0.
\end{equation}
\end{lemma}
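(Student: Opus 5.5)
The identity \eqref{eq:weighted-energy-identity} will be obtained by differentiating the quadratic form directly and using self-adjointness. The sign statement \eqref{eq:weighted-positive-sign} will then follow by reading off the principal symbol from the commutator rule \eqref{eq:Op-commutator}.

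\emph{The identity.} I would first take $z$ smooth in time with values in Schwartz functions. This is legitimate because the lemma is applied to frequency-localized smooth solutions, and $A$ is a bounded multiplication operator. Then $t\mapsto\ip{Az}{z}$ is differentiable, and
\[
 \frac{\dd}{\dd t}\ip{Az}{z}=\ip{Az_t}{z}+\ip{Az}{z_t}.
\]
I substitute $z_t=i\omega(D)z+G$, using the paper's convention that the inner product is linear in the first slot. The forcing terms give $\ip{AG}{z}+\ip{Az}{G}$. Since $A$ is multiplication by a real function, it is self-adjoint, so $\ip{Az}{G}=\overline{\ip{AG}{z}}$ and the forcing terms sum to $2\operatorname{Re}\ip{AG}{z}$.

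For the dispersive terms, $\omega(D)$ is a real Fourier multiplier and hence self-adjoint. Therefore
\[
 \ip{iA\omega(D)z}{z}+\ip{Az}{i\omega(D)z}
 =i\ip{A\omega(D)z}{z}-i\ip{\omega(D)Az}{z}
 =-\ip{i[\omega(D),A]z}{z}.
\]
Moving this term to the left-hand side gives \eqref{eq:weighted-energy-identity}. No symbolic calculus is needed for this step.

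\emph{The sign.} Write $A=\Op(\tau a_*)$, with symbol $\tau a_*(x)$ on an $x$ chart (or $\tau a_*(y)$ on a $y$ chart). By \eqref{eq:Op-commutator} applied with the pair $(\omega,\tau a_*)$, the principal symbol of $i[\omega(D),A]$ is the Poisson bracket
\[
 \{\omega,\tau a_*\}=\nabla_\zeta\omega\cdot\nabla_z(\tau a_*)-\nabla_z\omega\cdot\nabla_\zeta(\tau a_*).
\]
The second term vanishes because $\omega$ does not depend on $z$. Since $a_*$ depends on only one coordinate, the first term reduces to $\tau a_*'\,\partial_\xi\omega=\tau a_*'v_x$ on $x$ charts, or $\tau a_*'v_y$ on $y$ charts. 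By \eqref{eq:aprime-square} and the scaling \eqref{eq:scaled-x-weight}, $a_*'\ge0$. On the chart, $\tau=\sgn v_\nu$ is constant by \eqref{eq:xcharts}--\eqref{eq:ycharts}. Hence $\tau a_*'v_\nu=a_*'|v_\nu|\ge0$.

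\emph{The main obstacle.} The only delicate point is the phrase ``principal symbol'' itself, because $\omega$ is not smooth across $\xi=0$. I would handle this as follows.

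\begin{itemize}[leftmargin=*]
 \item On the $y$ charts, and on the $x$ charts at high frequency, the chart localization keeps $|\xi|\gtrsim N$ by \eqref{eq:ygeometry}. There $\omega$ is smooth, so the expansion \eqref{eq:Op-commutator} applies blockwise.
 \item Where the weight in $x$ meets the kink, the exact decomposition in \cref{lem:hilbert-weight} isolates the symmetric principal part $-(b'|D_x|+|D_x|b')$, whose symbol agrees with $a_*'v_x$ up to lower order.
 \item The same lemma places the remainder $\mathcal R_b$ at frequencies $\lesssim\lambda/R$, which lies off the chart.
\end{itemize}

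Thus the statement \eqref{eq:weighted-positive-sign} is a claim about the principal part modulo these controlled remainders, and that is how I would phrase it.
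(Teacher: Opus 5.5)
Your proof is correct and follows the paper's argument: differentiate $\ip{Az}{z}$, substitute $z_t=i\omega(D)z+G$, use the self-adjointness of $A$ and $\omega(D)$, and read the sign off from $\{\omega,\tau a_*\}=\tau a_*'v_\nu$ with $\tau=\sgn v_\nu$. One correction to your side remark: the $x_+$ chart does contain $\xi=0$ (where $|\eta|\simeq N$), and the Hilbert remainder $\mathcal R_b$ lies on that chart rather than off it (it vanishes only on $x_-$); this does not affect the principal-symbol claim, because $\norm{\mathcal R_b}_{L^2\to L^2}\lesssim(\lambda/R)^2$ makes it lower order.
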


\begin{proof}
Differentiate $\ip{Az}{z}$, substitute
$z_t=i\omega(D)z+G$, and use that $A$ and $\omega(D)$ are
self-adjoint.  This gives \eqref{eq:weighted-energy-identity};
\eqref{eq:weighted-positive-sign} follows from the definition of
$\tau$.
\end{proof}

The following proposition includes the frequency-adapted form of the signed microlocal smoothing estimate.

\begin{proposition}\label{prop:positive}
Let $1<\sigma<s$, $z_N=P_NJ^\sigma u$, and
$z_{\nu,N}=\chi_{\nu,N}(D)z_N$.  On an $x$ chart take the single
weight scale $\lambda\simeq N$.  If $\tau\in\{\pm1\}$ is the sign of
$v_x$ on the chart, the commutator with
$\tau a_{\lambda,x_0}$ has the form
\begin{equation}\label{eq:positive-x}
 \ip{i[\omega(D),\tau a_{\lambda,x_0}]z_{\nu,N}}{z_{\nu,N}}
 =\norm{Q_{x,\nu,x_0}z_N}_2^2
 +\ip{R_{x,\nu,x_0}z_N}{z_N},
\end{equation}
where
\[
 Q_{x,\nu,x_0}
 =\Op\left(
 \left(\frac\lambda R\right)^{1/2}
 \psi_0\left(\frac{\lambda(x-x_0)}R\right)
 |v_x|^{1/2}\chi_{\nu,N}\right).
\]
This uses the smooth factorization $a'=\psi_0^2$ from
\eqref{eq:aprime-square}; in particular, no nonsmooth choice of the
square root of $a'$ is involved.  On a $y$ chart the analogous
identity holds with $\mu\simeq N^{1/2}$, $y_0$, and $v_y$.  For $R$
sufficiently large,
\begin{equation}\label{eq:remainder-absorb}
 |\ip{R_{r,\nu,r_0}z_N}{z_N}|
 \le \frac14\sup_{r_0'\in\R}\norm{Q_{r,\nu,r_0'}z_N}_2^2
 +C_R\norm{z_N}_2^2,
\end{equation}
where $r=x$ or $y$ and $r_0=x_0$ or $y_0$.  The constants are
uniform in $N$ and the spatial centres.  For every smooth
frequency-truncated function,
\begin{equation}\label{eq:positive-finite}
 \sup_{r_0\in\R}\norm{Q_{r,\nu,r_0}z_N}_2^2
 \le C_RN^3\norm{z_N}_2^2<\infty.
\end{equation}
Thus the supremum in \eqref{eq:remainder-absorb} is finite before any
absorption is performed.

Finally, the statement holds verbatim with $\chi_{\nu,N}$ replaced by
any of the auxiliary symbols $\Theta_{\nu,N,\ell}$,
$\ell\in\{0,1,2,3\}$, of \cref{def:chart-ladder}, with constants
uniform in $\ell$.  Indeed, the proof uses only the symbol bounds
\eqref{eq:chart-symbol} and the fact that $v_\nu$ has a fixed sign on
the support, both of which are imposed on every level in
\cref{def:chart-ladder}.
\end{proposition}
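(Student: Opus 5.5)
We prove \cref{prop:positive} by a symbolic commutator expansion in the parabolically rescaled calculus of \cref{subsec:op-notation}, using \cref{lem:hilbert-weight} to isolate the kink of $\xi|\xi|$. We work on an $x$ chart with $\lambda\simeq N$; the $y$ chart is identical with $\mu\simeq N^{1/2}$, $y_0$ and $v_y$.

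\textbf{Splitting the symbol.} Write $\omega(D)=-D_x|D_x|+D_xD_y^2$. Let $b=\tau a_{\lambda,x_0}$ and $h=R/\lambda$. The piece $D_xD_y^2$ is a polynomial symbol, so \eqref{eq:Op-commutator} is exact to third order. It gives
\[
 i[D_xD_y^2,b]=\Op\bigl(\eta^2 b'\bigr)+\Op(r),
\]
where $r$ is a third-derivative term of size $h^{-3}\cdot O(1)$ in frequency. The kink piece is handled exactly by \eqref{eq:hilbert-weight}:
\[
 i[-D_x|D_x|,b]=-(b'|D_x|+|D_x|b')+\mathcal R_b .
\]
Combining the two, the principal part is the symmetrized operator with symbol $b'(\eta^2-2|\xi|)=b'v_x$. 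By \eqref{eq:weighted-positive-sign} this symbol equals
\[
 \frac\lambda R\,\psi_0\Bigl(\frac{\lambda(x-x_0)}R\Bigr)^2|v_x|\ge0
\]
on the chart, since $\tau=\sgn v_x$ there.

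\textbf{Factoring into $Q^*Q$ and collecting the remainder.} We pair the commutator against $z_{\nu,N}=\chi_{\nu,N}(D)z_N$. By \eqref{eq:Op-composition} and \eqref{eq:Op-adjoint}, $\chi_{\nu,N}(D)\,\Op(b'v_x)\,\chi_{\nu,N}(D)=Q^*Q+\Op(e)$. The reordering of $\psi_0$, $|v_x|^{1/2}$ and $\chi_{\nu,N}$ costs one $x$-derivative of the weight, of size $h^{-1}$, against one $\xi$-derivative of the symbol, of size $N^{-1}$ by \eqref{eq:chart-symbol}. The principal symbol has size $h^{-1}N$, so these errors have order $h^{-1}N\cdot h^{-1}N^{-1}=h^{-2}$ relative to it. Together with the $\mathcal R_b$ term and the third-order term, they form $R_{x,\nu,x_0}$.

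\textbf{Proving \eqref{eq:remainder-absorb}.} Each error kernel is localized at spatial scale $h$ around $x_0$, with polynomial tails. We decompose into translates $x_0+dR/\lambda$, exactly as in \eqref{eq:hilbert-strip}, which dominates the error by
\[
 \sum_d\langle d\rangle^{-2}\int a'\Bigl(\frac{\lambda(x-x_0-dh)}R\Bigr)|\cdot|^2 .
\]
Each such integral is bounded by $R^{-1}\lambda^{-1}N^{-1}\norm{Q_{x,\nu,x_0+dh}z_N}^2$, or by $\norm{z_N}^2$ in the low-frequency $\mathcal R_b$ part. The $\mathcal R_b$ part lives at $|\xi|\lesssim\lambda/R$, where $P_N$ vanishes once $R$ is large, since $P_N$ is supported in $|\zeta|\simeq N$. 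After summing in $d$ and taking the supremum over centres, the errors are a factor $O(R^{-1})$ times the supremum of the $Q$-norms, plus $C_R\norm{z_N}^2$. Choosing $R$ large gives the constant $1/4$.

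\textbf{The bound \eqref{eq:positive-finite}.} This is trivial: $\psi_0$ is bounded, $(\lambda/R)|v_x|\lesssim N^3$, and the chart cutoffs are $L^2$-bounded by the blockwise Calder\'on--Vaillancourt theorem. The same argument applies verbatim to every $\Theta_{\nu,N,\ell}$, since only the symbol bounds and the fixed sign of $v_\nu$ were used.

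\textbf{Main obstacle.} The delicate point is the $y$ chart. There the weight scale is $\mu\simeq N^{1/2}$, while the symbol has $\eta$-derivatives gaining only $N^{-1/2}$. The relative error is therefore $h^{-1}\cdot N^{-1/2}=R^{-1}$ only because of the parabolic symbol bounds \eqref{eq:chart-symbol}. We must check that the full second-order term, built from $\partial_\eta^2 v_y$-type expressions and the $\xi$-derivatives against the $y$-weight, stays $O(R^{-1})$ relative to $Q^*Q$ and never produces a term of relative size one.
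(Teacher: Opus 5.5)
There is a genuine gap in how you treat the Hilbert-transform remainder $\mathcal R_b$. This is the one place where the kink of $\xi|\xi|$ enters.

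You discard it because it lives at $|\xi|\lesssim\lambda/R$, ``where $P_N$ vanishes once $R$ is large.'' That is false. $P_N$ is the \emph{isotropic} projection to $|\zeta|\simeq N$ and imposes no lower bound on $|\xi|$. The set $|\xi|\lesssim N/R$, $|\eta|\simeq N$ lies in the support of $P_N$, and there $v_x=\eta^2-2|\xi|\simeq N^2$. So this set sits deep inside the $x_+$ chart, where $\chi_{x+,N}=1$, and $\mathcal R_b$ acts nontrivially on $z_{x+,N}$.

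Nor can you move this piece into $C_R\norm{z_N}_2^2$. By \eqref{eq:hilbert-L2} and scaling, $\norm{\mathcal R_b}_{L^2\to L^2}\lesssim\norm{\widehat{b''}}_{L^1}\simeq(\lambda/R)^2\simeq N^2R^{-2}$. The constant would therefore grow like $N^2$, contradicting the uniformity in $N$ required in \eqref{eq:remainder-absorb}.

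The missing observation is that the geometry which keeps $\mathcal R_b$ alive also makes it absorbable, because the positive symbol is large exactly there. Both frequencies in the form satisfy $|\xi|,|\xi'|\lesssim N/R$ while staying on $|\zeta|\simeq N$. Hence, for $R$ large, the form vanishes by support separation on the $x_-$ and $y$ charts.

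On the $x_+$ chart, the paper proceeds as follows:
\begin{itemize}
\item Let $\Pi^x_N$ be a smooth multiplier equal to one on the $x$-frequency support of the form. Write $\Pi^x_Nf_{\nu,N}=m_N(D)|v_x(D)|^{1/2}f_{\nu,N}$, where $\norm{m_N(D)}_{L^2\to L^2}\lesssim N^{-1}$ and the kernel is integrable at scales $R/N$ in $x$ and $N^{-1}$ in $y$.
\item Insert this into \eqref{eq:hilbert-strip} with $H=L^2_y$ and split the kernel into translated strips.
\item Use $\norm{\psi_0(\lambda(\cdot-x_0')/R)\,|v_x(D)|^{1/2}f_{\nu,N}}_2^2=(R/\lambda)\norm{Q_{x,\nu,x_0'}z_N}_2^2$.
\end{itemize}
This gives
\[
 |\ip{\mathcal R_bf_{\nu,N}}{f_{\nu,N}}|
 \lesssim\Bigl(\frac\lambda R\Bigr)^2N^{-2}\,\frac R\lambda\,
 \sup_{x_0'\in\R}\norm{Q_{x,\nu,x_0'}z_N}_2^2
 \simeq\frac1{RN}\sup_{x_0'\in\R}\norm{Q_{x,\nu,x_0'}z_N}_2^2,
\]
which is absorbable for $R$ large. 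This is one reason the supremum over translated centres, and its a priori finiteness \eqref{eq:positive-finite}, are built into the statement.

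Apart from this, your outline matches the paper's. The $y$-chart point you single out as the main obstacle is in fact harmless:
\begin{itemize}
\item For a $y$-dependent weight one has exactly $i[\omega(D),b]=\frac12\bigl(M_{b'}v_y(D)+v_y(D)M_{b'}\bigr)$. The $-ib''D_x$ term is precisely the symmetrization correction; see \eqref{eq:exact-y-comm}.
\item You can factor through the exact identity $M_{w^2}q^2+q^2M_{w^2}=2qM_{w^2}q+[[M_{w^2},q],q]$, with $w=\psi_0\bigl(\kappa(r-r_0)/R\bigr)$, $\kappa\in\{\lambda,\mu\}$, and $q$ a positive square root of $\tau v_r$ on the chart. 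This leaves only a double commutator.
\item Each commutator gains $(\mu/R)N^{-1/2}\lesssim R^{-1}$, or $(\lambda/R)N^{-1}\lesssim R^{-1}$ on $x$ charts.
\item On $x$ charts, moreover, $i[D_xD_y^2,b]=b'D_y^2$ holds exactly, with no third-order term.
\end{itemize}
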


\begin{proof}
	We write the argument in a form that treats the two kinds of charts
	simultaneously.  Set
	\[
	\kappa_x=\lambda\simeq N,
	\qquad
	\kappa_y=\mu\simeq N^{1/2},
	\qquad
	h_r=\frac{\kappa_r}{R},
	\]
	where $r=x$ or $r=y$, and define
	\[
	w_{r_0}(r)
	=
	\psi_0\left(\frac{\kappa_r(r-r_0)}{R}\right).
	\]
	Thus, if $b=a_{\lambda,x_0}$ on an $x$ chart or
	$b=a_{\mu,y_0}$ on a $y$ chart, then
	\[
	\partial_r b=h_r w_{r_0}^2.
	\]
	Let
	\[
	f_{\nu,N}=\chi_{\nu,N}(D)z_N.
	\]
	
	We first identify the commutators exactly.  Since
	\[
	\omega(D)=D_xD_y^2-D_x|D_x|,
	\]
	the identity in \cref{lem:hilbert-weight} gives, for an $x$-dependent
	weight $b$,
	\[
	i[\omega(D),b]
	=
	\frac12\left(
	M_{b'}v_x(D)+v_x(D)M_{b'}
	\right)
	+\mathcal R_b.
	\]
	Indeed, $D_y$ commutes with $b$ and
	\[
	\frac12\left(
	M_{b'}v_x(D)+v_x(D)M_{b'}
	\right)
	=
	b'D_y^2-\bigl(b'|D_x|+|D_x|b'\bigr).
	\]
	Thus the only nonpolynomial contribution is the remainder
	$\mathcal R_b$ from \cref{lem:hilbert-weight}.
	
	For a $y$-dependent weight, the multiplier $D_x|D_x|$ commutes with
	$b$, while a direct computation gives
	\begin{equation}\label{eq:exact-y-comm}
		i[\omega(D),b]
		=
		2b'D_xD_y-i b''D_x
		=
		\frac12\left(
		M_{b'}v_y(D)+v_y(D)M_{b'}
		\right).
	\end{equation}
	Consequently, the term involving $b''D_x$ is not an additional
	heuristic error: it is precisely the correction that makes the
	principal $y$-commutator symmetric.
	
	Fix a chart and let $\tau$ be the sign of $v_r$ on its support.
	Choose a real smooth symbol $p_{\nu,N}^{(r)}$ that agrees with
	$\tau v_r$ on a neighbourhood of $\supp\chi_{\nu,N}$ and is strictly
	positive on a slightly larger neighbourhood.  This is possible
	because
	\[
	\tau v_r\ge c_0N
	\]
	on the chart.  Put
	\[
	q_{\nu,N}^{(r)}
	=
	\bigl(p_{\nu,N}^{(r)}\bigr)^{1/2}.
	\]
	The square root is therefore smooth on the relevant frequency
	region, and
	\[
	q_{\nu,N}^{(r)}(D)f_{\nu,N}
	=
	|v_r(D)|^{1/2}f_{\nu,N}.
	\]
	In particular,
	\[
	Q_{r,\nu,r_0}z_N
	=
	h_r^{1/2}
	M_{w_{r_0}}
	q_{\nu,N}^{(r)}(D)f_{\nu,N},
	\]
	which agrees with the operator stated in the proposition.
	
	The symmetrized principal quadratic form can now be factorized
	without using sharp G{\aa}rding.  Writing
	$q=q_{\nu,N}^{(r)}(D)$ and $w=w_{r_0}$, we have
	\[
	\begin{aligned}
		&\frac{h_r}{2}
		\ip{
			\bigl(M_{w^2}q^2+q^2M_{w^2}\bigr)f_{\nu,N}
		}{f_{\nu,N}}
		\\
		&\qquad
		=
		h_r\ip{qM_{w^2}qf_{\nu,N}}{f_{\nu,N}}
		+
		\frac{h_r}{2}
		\ip{
			\bigl[\,[M_{w^2},q],q\,\bigr]f_{\nu,N}
		}{f_{\nu,N}}
		\\
		&\qquad
		=
		\norm{Q_{r,\nu,r_0}z_N}_2^2
		+
		\frac{h_r}{2}
		\ip{
			\bigl[\,[M_{w^2},q],q\,\bigr]f_{\nu,N}
		}{f_{\nu,N}}.
	\end{aligned}
	\]
	This is an exact algebraic identity.
	
	It remains to estimate the double commutator.  On an enlarged chart,
	the symbol $q_{\nu,N}^{(r)}$ satisfies the relative derivative bounds
	\[
	\left|
	\partial_\xi^\alpha\partial_\eta^\beta
	q_{\nu,N}^{(r)}
	\right|
	\lesssim_{\alpha,\beta}
	q_{\nu,N}^{(r)}
	N^{-\alpha-\beta/2}.
	\]
	On an $x$ chart these bounds use, besides $\tau v_x\ge c_0N$, the
	elementary fact that $|v_x|\simeq\max\{|\xi|,\eta^2\}\gtrsim|\eta|N^{1/2}$
	there, verified in the proof of \cref{prop:product}.
	For an $x$ weight only $\xi$ derivatives occur in the composition
	with $M_{w^2}$, while for a $y$ weight only $\eta$ derivatives occur.
	Accordingly, each commutator contributes respectively the factor
	\[
	\frac{\lambda}{R}N^{-1}
	\lesssim R^{-1}
	\]
	or
	\[
	\frac{\mu}{R}N^{-1/2}
	\lesssim R^{-1}.
	\]
	The resulting symbol carries the second derivative
	$\partial_r^2(w_{r_0}^2)=h_r^2(\psi_0^2)''(\kappa_r(r-r_0)/R)$.
	Since $(\psi_0^2)''$ is Schwartz and $\psi_0^2\ge c_\psi^2$ on
	$[-2,2]$ by \eqref{eq:w-lower}, it is dominated by a rapidly
	convergent sum of translated copies of the weight,
	\[
	\bigl|(\psi_0^2)''(v)\bigr|
	\le
	C\sum_{d\in\Z}\langle d\rangle^{-2}\psi_0^2(v-d),
	\]
	exactly as in \eqref{eq:hilbert-strip}.  Applying the blockwise
	symbolic calculus
	\eqref{eq:Op-composition}--\eqref{eq:Op-commutator} and summing the
	translated strips therefore gives
	\[
	\left|
	\frac{h_r}{2}
	\ip{
		\bigl[\,[M_{w^2},q],q\,\bigr]f_{\nu,N}
	}{f_{\nu,N}}
	\right|
	\le
	\frac{C}{R}
	\sup_{r_0'\in\R}
	\norm{Q_{r,\nu,r_0'}z_N}_2^2
	+
	C_R\norm{z_N}_2^2.
	\]
	The constants are uniform in $N$ and in the spatial centre.  As for
	the Hilbert remainder treated next, the supremum over translated
	centres appears here because the derivatives of the weight profile
	are not pointwise dominated by the profile itself; this is the form
	in which the bound is used in \eqref{eq:remainder-absorb}.
	
	Suppose now that $r=x$.  By \eqref{eq:hilbert-lowxi}, the quadratic
	form associated with $\mathcal R_b$ is supported where
	\[
	|\xi|+|\xi'|\lesssim\frac{N}{R},
	\qquad
	\sgn\xi\ne\sgn\xi'.
	\]
	Since both frequency variables remain on the isotropic annulus
	$|\zeta|\simeq N$, this implies
	\[
	|\eta|\simeq N,
	\qquad
	v_x(\xi,\eta)
	=
	\eta^2-2|\xi|
	\simeq N^2.
	\]
	Hence the Hilbert remainder vanishes on the $x_-$ chart.  On the
	$x_+$ chart it is confined to a region on which
	$|v_x|^{1/2}\simeq N$.
	
	Let $\Pi_N^x$ be a smooth multiplier equal to one on the
	$x$-frequency support described in \eqref{eq:hilbert-lowxi}.  The
	strip estimate \eqref{eq:hilbert-strip}, applied with
	$H=L_y^2$, yields
	\[
	\begin{aligned}
		\left|\ip{\mathcal R_bf_{\nu,N}}{f_{\nu,N}}\right|
		\lesssim{}&
		\left(\frac{\lambda}{R}\right)^2
		\sum_{d\in\mathbb Z}\langle d\rangle^{-2}
		\\
		&\times
		\int_{\mathbb R^2}
		a'\left(
		\frac{\lambda(x-x_0-dR/\lambda)}{R}
		\right)
		|\Pi_N^xf_{\nu,N}(x,y)|^2
		\,\dd x\,\dd y.
	\end{aligned}
	\]
	On this support,
	\[
	\Pi_N^xf_{\nu,N}
	=
	m_N(D)\,
	|v_x(D)|^{1/2}f_{\nu,N},
	\]
	where $m_N$ is a smooth multiplier satisfying
	\[
	\norm{m_N(D)}_{L^2\to L^2}\lesssim N^{-1}.
	\]
	The kernel of $m_N(D)$ is integrable at the spatial scales
	$R/N$ in the $x$ variable and $N^{-1}$ in the $y$ variable.
	Decomposing its $x$ kernel into translates of the strips appearing
	in \eqref{eq:hilbert-strip} gives
	\[
	\begin{aligned}
		&\int_{\mathbb R^2}
		a'\left(
		\frac{\lambda(x-x_0-dR/\lambda)}{R}
		\right)
		|\Pi_N^xf_{\nu,N}|^2
		\,\dd x\,\dd y
		\\
		&\qquad\lesssim
		N^{-2}
		\sum_{e\in\mathbb Z}\langle e\rangle^{-2}
		\norm{
			w_{x_0+(d+e)R/\lambda}
			|v_x(D)|^{1/2}f_{\nu,N}
		}_2^2.
	\end{aligned}
	\]
	Because $\lambda\simeq N$ and
	\[
	\norm{
		w_{x_0'}
		|v_x(D)|^{1/2}f_{\nu,N}
	}_2^2
	=
	\frac{R}{\lambda}
	\norm{Q_{x,\nu,x_0'}z_N}_2^2,
	\]
	the summability of the translated-strip coefficients gives
	\[
	\left|\ip{\mathcal R_bf_{\nu,N}}{f_{\nu,N}}\right|
	\le
	\frac{C}{RN}
	\sup_{x_0'\in\mathbb R}
	\norm{Q_{x,\nu,x_0'}z_N}_2^2
	+
	C_R\norm{z_N}_2^2.
	\]
	Since $N\ge2$, the first coefficient is bounded by $C/R$.  This proves
	that the kink contribution is controlled by the same positive
	quantity, with the supremum over translated centres required in
	\eqref{eq:remainder-absorb}.
	
	Combining the preceding estimates, we obtain
	\[
	\left|
	\ip{R_{r,\nu,r_0}z_N}{z_N}
	\right|
	\le
	\frac{C}{R}
	\sup_{r_0'\in\mathbb R}
	\norm{Q_{r,\nu,r_0'}z_N}_2^2
	+
	C_R\norm{z_N}_2^2.
	\]
	Choosing $R$ sufficiently large proves
	\eqref{eq:remainder-absorb}.
	
	For completeness, the supremum used above is finite before the
	absorption argument.  Indeed, on an $x$ chart,
	\[
	h_x|v_x|\lesssim_R N^3,
	\]
	whereas on a $y$ chart,
	\[
	h_y|v_y|\lesssim_R N^2.
	\]
	Since $w_{r_0}$ is uniformly bounded, these estimates imply
	\[
	\sup_{r_0\in\mathbb R}
	\norm{Q_{r,\nu,r_0}z_N}_2^2
	\le
	C_RN^3\norm{z_N}_2^2,
	\]
	which is \eqref{eq:positive-finite}.
	
	The same proof applies to each auxiliary cutoff
	$\Theta_{\nu,N,\ell}$.  The required sign condition and rescaled
	symbol estimates hold uniformly in $\ell$ by
	\cref{def:chart-ladder}; for the auxiliary $x$-chart levels, the
	Hilbert remainder is either absent by support separation or is
	estimated by the same translated-strip argument.  Finally, one first
	applies the identities to Schwartz functions with the stated
	frequency localization.  Standard frequency-preserving
	approximation, together with Fatou's lemma for the nonnegative
	$Q^*Q$ term, yields the result for general smooth
	frequency-truncated functions.
\end{proof}

Combining \cref{lem:local-mean,prop:positive}, the positive term controls precisely the continuous smoothing norms in \eqref{eq:S-def}.

\subsection{The commutator near the characteristic curve}

After localizing the equation by $\chi_{\nu,N}(D)$ and pairing it with
the correspondingly localized weighted factor, the relevant
symmetrized chart commutator is
\begin{equation}\label{eq:chart-commutator}
 [a_{\nu,N}(D),u_{\ll N}]\partial_xz_N,
 \qquad a_{\nu,N}=\chi_{\nu,N}^2.
\end{equation}
The terms generated by commuting one copy of $\chi_{\nu,N}(D)$
through the spatial weight are lower-order translated-strip errors and
are included in the remainder estimate of \cref{prop:positive}.
The low frequency is measured by
\begin{equation}\label{eq:rho}
 \rho(\alpha,\beta)=|\alpha|+\beta^2,
 \qquad
 \rho(\alpha,\beta)\le\delta N.
\end{equation}

We first record the high-variable expansion that converts anisotropic
symbol bounds into the bilinear estimates used throughout the
commutator and weighted-energy arguments.

For a bilinear symbol $m_N$, write
\begin{equation}\label{eq:Bmn}
	\B_{m_N}(f,g)(x)
	=
	\iint e^{ix\cdot(\theta+\zeta)}
	m_N(\theta,\zeta)
	\widehat f(\theta)\widehat g(\zeta)
	\dd\theta\dd\zeta.
\end{equation}

\begin{lemma}\label{lem:high-expansion}
Suppose that $m_N$ is supported where
$\rho(\theta)\le\delta N$ and in a fixed parabolic high-frequency
block, and assume
\begin{equation}\label{eq:high-derivatives}
	|\partial_\xi^a\partial_\eta^bm_N(\theta,\zeta)|
	\le C_{a,b}N^{-1-a-b/2}.
\end{equation}
No regularity in $\theta$ is assumed. If $s>1$, then
\begin{equation}\label{eq:B-basic}
	\norm{\B_{m_N}(f,g)}_2
	\lesssim_s
	N^{-1}\norm f_{H^s}\norm g_2.
\end{equation}
If
\[
d_N(\theta,\zeta)
=
i\alpha q_N(\theta)m_N(\theta,\zeta),
\]
where $q_N$ is uniformly bounded and supported in
$\rho(\theta)\le\delta N$, then
\begin{equation}\label{eq:B-low-derivative}
	\norm{\B_{d_N}(F,g)}_2
	\lesssim_s
	\norm F_{H^s}\norm g_2.
\end{equation}
\end{lemma}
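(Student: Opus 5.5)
The plan is to expand $m_N$ in the high variable only, since no regularity in $\theta$ is available. Fix $\theta$. The function $\zeta\mapsto m_N(\theta,\zeta)$ is supported in the fixed parabolic block, which has $\xi$-width $\lesssim N$ and $\eta$-width $\lesssim N$ (or $N^{1/2}$ on the band). I enclose this block in a rectangle $Q_N$ of side lengths $\ell_\xi\simeq N$ and $\ell_\eta\simeq N$, multiply by a fixed cutoff $\tilde\chi_N(\zeta)$ equal to one on the block, and expand in a Fourier series on $Q_N$:
\[
 m_N(\theta,\zeta)=\tilde\chi_N(\zeta)\sum_{k\in\Z^2}c_k(\theta)\,e^{i(k_1\xi/\ell_\xi+k_2\eta/\ell_\eta)}.
\]
Integrating by parts in $\zeta$ and using \eqref{eq:high-derivatives} gives $|c_k(\theta)|\le C_M N^{-1}\langle k\rangle^{-M}$. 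Here $\eta$-derivatives cost $N^{-1/2}$, which is at most the $\ell_\eta^{-1}$ scale only if $\ell_\eta\lesssim N^{1/2}$. I would therefore take $\ell_\eta\simeq N^{1/2}$ when the block is parabolic, and $\ell_\eta\simeq N$ otherwise with the weaker bound. In either case the coefficient bound is uniform in $\theta$, with the decay $\langle k\rangle^{-M}$ in $k$.

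Next, each term factorizes:
\[
 \B_{m_N}(f,g)=\sum_k \bigl(c_k(D)f\bigr)\cdot\bigl(\tau_k\tilde\chi_N(D)g\bigr),
\]
where $\tau_k$ is a spatial translation by $(k_1/\ell_\xi,k_2/\ell_\eta)$ and $c_k(D)f$ is the function whose Fourier transform is $c_k(\theta)\widehat f(\theta)$. By Hölder,
\[
 \bigl\|\bigl(c_k(D)f\bigr)\cdot\bigl(\tau_k\tilde\chi_N(D)g\bigr)\bigr\|_2\le\norm{c_k(D)f}_{L^\infty}\norm g_2 .
\]
I bound the first factor by $\norm{\widehat{c_k(D)f}}_{L^1}\le N^{-1}\langle k\rangle^{-M}\norm{\widehat f}_{L^1}$. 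Since $s>1$ in two dimensions, Cauchy--Schwarz gives $\norm{\widehat f}_{L^1}\lesssim_s\norm f_{H^s}$. Summing in $k$ proves \eqref{eq:B-basic}. This is exactly why no $\theta$-regularity is needed: the multiplier acts on $\widehat f$ only through an $L^\infty$-to-$L^1$ pairing.

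For \eqref{eq:B-low-derivative}, the same expansion applies to $d_N$, now with coefficients $i\alpha q_N(\theta)c_k(\theta)$. The low-frequency bound is then
\[
 \int|\alpha|\,|q_N(\theta)|\,N^{-1}\langle k\rangle^{-M}|\widehat F(\theta)|\,d\theta .
\]
On the support $|\alpha|\le\rho(\theta)\le\delta N$, so $|\alpha|N^{-1}\le\delta\lesssim1$, and the integral is bounded by $\langle k\rangle^{-M}\norm{\widehat F}_{L^1}\lesssim\langle k\rangle^{-M}\norm F_{H^s}$. The factor $\alpha$ thus exactly compensates the $N^{-1}$ decay of $m_N$.

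The only delicate point is the Fourier-series step: the coefficient decay must hold uniformly at the anisotropic scales. I would handle this by choosing the rectangle adapted to the derivative scales $N$ and $N^{1/2}$ in \eqref{eq:high-derivatives}. On a non-parabolic isotropic block, $\eta$ ranges over length $N$, so I would cover the block by $O(N^{1/2})$ sub-rectangles of $\eta$-width $N^{1/2}$ using a smooth partition. The pieces act on almost orthogonal frequency pieces of $g$, and since all of them share the same low-frequency factor, they recombine in $L^2$ by orthogonality in $\zeta$ with no loss.
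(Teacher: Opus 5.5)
Your proposal is correct and is essentially the paper's proof. Both use a Fourier series in the high variable rescaled at the scales $(N,N^{1/2})$, coefficients bounded by $C_M N^{-1}\langle k\rangle^{-M}$ uniformly in $\theta$, and an $L^\infty$ bound for $c_k(D)f$ from $s>1$; your bound $\|\widehat f\|_{L^1}\lesssim_s\|f\|_{H^s}$ is the same step as the paper's Plancherel-plus-Sobolev argument. Both then translate the high factor and use $|\alpha|\le\delta N$ to cancel the prefactor $N^{-1}$.

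Your last paragraph on non-parabolic blocks is unnecessary, since the lemma assumes a parabolic block. If you keep it, the almost orthogonality of the outputs comes from $|\beta|\le(\delta N)^{1/2}$ on the low-frequency support, not only from the orthogonality of the inputs.
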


\begin{proof}
Rescale only the high-frequency variable:
\[
\xi=N\widetilde\xi,
\qquad
\eta=N^{1/2}\widetilde\eta.
\]
Choose $\psi_N$ equal to one on the support of $m_N$ in the
$\zeta$ variable and supported strictly inside the corresponding
periodization cell. A Fourier series in the rescaled high variable
gives
\begin{equation}\label{eq:high-series}
	m_N(\theta,\zeta)
	=
	N^{-1}\sum_{k\in\mathbb Z^2}
	c_{N,k}(\theta)
	e^{2\pi i(k_1\xi/N+k_2\eta/N^{1/2})}
	\psi_N(\zeta).
\end{equation}
Integration by parts in the rescaled high variable, using
\eqref{eq:high-derivatives}, yields, for every $M$,
\[
\sup_\theta|c_{N,k}(\theta)|
\le C_M\langle k\rangle^{-M}.
\]
No derivative of $c_{N,k}$ in $\theta$ is used.

For fixed $k$, the multiplier $c_{N,k}(D)$ is bounded on $H^s$ by
Plancherel's theorem. Since $s>1$,
\[
\norm{c_{N,k}(D)f}_{L^\infty}
\lesssim_s
\norm{c_{N,k}(D)f}_{H^s}
\lesssim_s
\langle k\rangle^{-M}\norm f_{H^s}.
\]
The exponential factor in \eqref{eq:high-series} translates the
high-frequency factor by
\[
\left(
\frac{2\pi k_1}{N},
\frac{2\pi k_2}{N^{1/2}}
\right)
\]
and therefore preserves its $L^2$ norm. Summing in $k$ proves
\eqref{eq:B-basic}.

For \eqref{eq:B-low-derivative}, apply the same expansion to
$d_N$. On the low-frequency support, $|\alpha|\lesssim N$, while
$q_N$ is uniformly bounded. Hence the factor $|\alpha|$ cancels the
prefactor $N^{-1}$ in \eqref{eq:high-series}. The preceding argument
then gives \eqref{eq:B-low-derivative}.
\end{proof}

Write $a_N=a_{\nu,N}=\chi_{\nu,N}^2$.  The bilinear symbol of
\eqref{eq:chart-commutator} is
\begin{equation}\label{eq:mN}
 m_N(\theta,\zeta)
 =i\xi\bigl(a_N(\zeta+\theta)-a_N(\zeta)\bigr),
 \qquad \theta=(\alpha,\beta),\quad \zeta=(\xi,\eta).
\end{equation}
By the fundamental theorem of calculus,
\begin{align}
 m_{N,x}
 &=i\xi\alpha\int_0^1\partial_\xi a_N(\zeta+\tau\theta)\dd\tau,
 \label{eq:mNx}\\
 m_{N,y}
 &=i\xi\beta\int_0^1\partial_\eta a_N(\zeta+\tau\theta)\dd\tau.
 \label{eq:mNy}
\end{align}
The term \eqref{eq:mNx} is controlled by $\partial_xu$.  Choose a
smooth symbol $\chi_{N,\Gamma}^{\mathrm{tr}}(\theta,\zeta)$, equal to one
when $\zeta+\tau\theta$ meets a chart transition for some
$\tau\in[0,1]$, supported in a fixed enlargement of
\begin{equation}\label{eq:band}
 |\xi|\simeq N,
 \qquad |\eta|\simeq N^{1/2},
 \qquad |\eta^2-2|\xi||\lesssim c_0N,
\end{equation}
and satisfying the mixed symbol bounds of \eqref{eq:chart-symbol}.
Define
\begin{equation}\label{eq:mNy-band-def}
 m_{N,\Gamma}
 :=m_{N,y}\chi_{N,\Gamma}^{\mathrm{tr}}.
\end{equation}
Away from this band, $\partial_\eta a_N=O(N^{-1})$ and
\eqref{eq:mNy} is controlled by $\partial_yu$.  The dangerous part
has size
\begin{equation}\label{eq:danger-size}
 |m_{N,\Gamma}(\theta,\zeta)|
 \lesssim N^{1/2}|\beta|.
\end{equation}

We distinguish two regions in the low-frequency plane.  Let
\begin{equation}\label{eq:tan-normal}
 \mathcal S_N^{\mathrm{tan}}
 =\{|\beta|\le K N^{-1/2}|\alpha|\},
 \qquad
 \mathcal S_N^{\mathrm{nor}}
 =\{|\beta|>K N^{-1/2}|\alpha|\}.
\end{equation}
Choose an even function $\phi\in C^\infty(\R)$ satisfying
$\phi(r)=0$ for $|r|\le K$ and $\phi(r)=1$ for $|r|\ge2K$, and set
\begin{equation}\label{eq:sector-partition}
 \psi_{\mathrm{nor}}(\theta)=
 \phi\!\left(\frac{N^{1/2}\beta}{\alpha}\right),
 \qquad
 \psi_{\mathrm{tan}}(\theta)=1-\psi_{\mathrm{nor}}(\theta).
\end{equation}
The quotient in \eqref{eq:sector-partition} is interpreted by
homogeneity: $\psi_{\mathrm{nor}}(0,\beta)=1$ for $\beta\ne0$; the
value at $\theta=0$ is immaterial.  Thus $\psi_{\mathrm{tan}}$ is
supported in $\{|\beta|\le2KN^{-1/2}|\alpha|\}$, whereas
$\psi_{\mathrm{nor}}$ is supported in $\mathcal S_N^{\mathrm{nor}}$.
The functions $\psi_{\mathrm{tan}}$ and $\psi_{\mathrm{nor}}$ are
homogeneous of degree zero in $\theta$ and are not
smooth at the origin.  Consequently the estimate in the first region
cannot be deduced from a pointwise bound for the symbol.  We use an
anisotropic dyadic decomposition of the low frequency.

\begin{lemma}\label{lem:tangential}
Let $\nu$ be any chart and let $m_{N,\Gamma}$ be the symbol defined in
\eqref{eq:mNy-band-def}, whose high-frequency support is contained in
\eqref{eq:band}.  Set
$m_{N,1}=m_{N,\Gamma}\psi_{\mathrm{tan}}$.
Assume $\delta\le\min\{c_0^2,(4K^2)^{-1}\}$.  Then, for every $f$ with
Fourier support in $\{\rho(\theta)\le\delta N\}$ and every
$g\in L^2(\R^2)$,
\begin{equation}\label{eq:tangent-log}
 \norm{\B_{m_{N,1}}(f,g)}_{L^2}
 \le C_K\bigl[(1+\log N)\norm{\partial_xf}_{L^\infty}
 +\norm{f}_{L^\infty}\bigr]\norm g_{L^2}.
\end{equation}
In particular, uniformly in the weight centre,
\[
 \abs{\ip{\B_{m_{N,1}}(u_{\ll N},z_N)}{a_*z_N}}
 \le C_K\bigl[(1+\log N)\norm{u_x(t)}_{L^\infty}
 +\norm{u(t)}_{L^\infty}\bigr]\norm{z_N(t)}_2^2.
\]
\end{lemma}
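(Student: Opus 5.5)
The plan is to reduce \eqref{eq:tangent-log} to the Fourier-series device of \cref{lem:high-expansion}. In that lemma, however, the $H^s$ control of the low factor is replaced by $L^\infty$ control, and the non-smooth sector cutoff $\psi_{\mathrm{tan}}$ is handled by an anisotropic dyadic decomposition of $\theta$.

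\textbf{Pointwise bound.} On the support of $\psi_{\mathrm{tan}}$ we have $|\beta|\le 2KN^{-1/2}|\alpha|$. On the band \eqref{eq:band}, $|\xi|\simeq N$ and $|\partial_\eta a_N|\lesssim N^{-1/2}$ by \eqref{eq:chart-symbol}. Hence \eqref{eq:mNy} gives $|m_{N,1}|\lesssim K|\alpha|$, so the symbol behaves like $\partial_x$ on the low factor. I will write
\[
 m_{N,1}=i\alpha\,\frac{N^{1/2}\beta}{\alpha}\psi_{\mathrm{tan}}(\theta)\cdot\widetilde m_N(\theta,\zeta),
 \qquad
 \widetilde m_N=N^{-1/2}\xi\int_0^1\partial_\eta a_N(\zeta+\tau\theta)\dd\tau\,\chi^{\mathrm{tr}}_{N,\Gamma}.
\]
The middle factor is bounded by $2K$, and $\widetilde m_N$ obeys the high-variable bounds \eqref{eq:high-derivatives} without the factor $N^{-1}$, i.e.\ $|\partial_\xi^a\partial_\eta^b\widetilde m_N|\lesssim N^{-a-b/2}$.

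\textbf{Dyadic decomposition of $\theta$.} I split the low frequency into $|\alpha|\lesssim1$ and dyadic shells $|\alpha|\simeq2^j$ with $1\le2^j\lesssim\delta N$; there are $O(1+\log N)$ shells.
\begin{itemize}[leftmargin=*]
\item On $|\alpha|\lesssim1$ the tangential condition forces $|\beta|\lesssim KN^{-1/2}$, so $|m_{N,1}|\lesssim K$. The piece is treated with the factor $i\alpha$ absorbed into the coefficient, and yields the $\norm f_{L^\infty}$ term.
\item On the shell $j$, I rescale $\alpha=2^ja$ and $\beta=2^jN^{-1/2}b$. In these coordinates the function $\phi(N^{1/2}\beta/\alpha)$ is smooth with uniform bounds on $|a|\simeq1$, $|b|\lesssim K$.
\item The dependence of $\widetilde m_N$ on $\theta$ through $\zeta+\tau\theta$ is also smooth at these scales. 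An $\alpha$-derivative costs $N^{-1}\cdot2^j\le\delta$, and a $\beta$-derivative costs $N^{-1/2}\cdot2^jN^{-1/2}\le\delta$ after rescaling. Here I use $\delta\le\min\{c_0^2,(4K^2)^{-1}\}$ to keep $\zeta+\tau\theta$ inside the enlarged band where \eqref{eq:chart-symbol} holds.
\end{itemize}

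\textbf{Fourier expansion and summation.} Expanding each shell piece in a Fourier series in the rescaled high variable, as in \eqref{eq:high-series}, gives coefficients $c_{N,k,j}(\theta)$. Each coefficient is a smooth, compactly supported symbol at the anisotropic scale $(2^j,2^jN^{-1/2})$, with seminorms $O_M(\langle k\rangle^{-M})$ uniformly in $j$ and $N$. Its kernel therefore has $L^1$ norm $O(\langle k\rangle^{-M})$, so
\[
 \norm{c_{N,k,j}(D)\partial_xf}_{L^\infty}\lesssim\langle k\rangle^{-M}\norm{\partial_xf}_{L^\infty}.
\]
The exponential factors translate the high factor $g$ and preserve its $L^2$ norm. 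Summing over $k$ and then over the $O(\log N)$ shells gives \eqref{eq:tangent-log}.

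\textbf{Second assertion.} It follows from Cauchy--Schwarz, using $\norm{a_*}_{L^\infty}\lesssim1$ uniformly in the centre, together with $u_{\ll N}$ having Fourier support in $\rho\le\delta N$ and $\norm{P_{\ll N}u_x}_\infty\lesssim\norm{u_x}_\infty$.

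\textbf{Main obstacle.} I expect the hardest step to be verifying the uniform symbol bounds in $\theta$ on each shell, since $\psi_{\mathrm{tan}}$ and $\zeta+\tau\theta$ interact there. In particular one must check that the $\tau$-integral and the transition cutoff $\chi^{\mathrm{tr}}_{N,\Gamma}$ do not spoil smoothness at the scale $2^jN^{-1/2}$ in $\beta$. The logarithm is exactly the price of not summing the shells more efficiently.
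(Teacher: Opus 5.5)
\textbf{Assessment.} Your route is the paper's, with one unjustified step at the origin in $\theta$. The common skeleton is:
\begin{itemize}
\item anisotropic shells of size $2^j\times 2^jN^{-1/2}$;
\item a Fourier series in the rescaled high variable, whose coefficients have kernels of $L^1$ norm $O(\langle k\rangle^{-M})$;
\item the factorization $m_{N,1}=i\alpha\cdot(\cdots)$ on the $O(\log N)$ shells with $2^j\ge1$, which produces the $(1+\log N)\norm{\partial_xf}_{L^\infty}$ term.
\end{itemize}
Decomposing in $\abs{\alpha}$ rather than in $\rho$ is harmless. It even makes the paper's Step~1 unnecessary, because the tangential condition already puts each shell in the right box.

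\textbf{The gap: the piece $\abs{\alpha}\lesssim1$.} The cutoff $\psi_{\mathrm{tan}}$ is singular only at $\theta=0$, and that point lies entirely in this piece. For it you give only the pointwise bound $\abs{m_{N,1}}\lesssim K$. That is exactly what the paper notes cannot suffice. To bound this piece by $\norm{f}_{L^\infty}$, you must show that its Fourier coefficients have integrable inverse Fourier transforms, uniformly in $N$ and $k$. These coefficients contain the factor $N^{1/2}\beta\,\psi_{\mathrm{tan}}(\theta)\chi(\alpha)$. Your bullet on the smoothness of $\phi(N^{1/2}\beta/\alpha)$ only covers $\abs{a}\simeq1$, so it does not reach this piece.

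\textbf{Why the vanishing factor is essential.} Keeping $i\alpha$ (equivalently $N^{1/2}\beta$) inside the coefficient is the right instinct. The claim is true only because this factor vanishes at the singular point. Without it, a compactly supported, non-constant symbol that is homogeneous of degree zero near the origin, such as $\psi_{\mathrm{tan}}(\theta)\chi(\alpha)$, has a kernel with a non-integrable $\abs{x}^{-2}$ tail in the plane. It is therefore not bounded on $L^\infty$.

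\textbf{The fix.} Continue your dyadic decomposition below $2^j=1$. On each shell $\abs{\alpha}\simeq2^j<1$, run your shell argument without factoring out $i\alpha$. The size bound $\abs{m_{N,1}}\lesssim K2^j$ then gives $C_K2^j\norm{f}_{L^\infty}\norm{g}_{L^2}$, and the geometric series sums to $C_K\norm{f}_{L^\infty}\norm{g}_{L^2}$. This is precisely how the paper handles the shells with $\rho_j<1$, using $A_j\le K\rho_j$ and $\sum_{\rho_j<1}\rho_j\le2$.

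With that addition your proof is complete. The difficulty you anticipated, uniform symbol bounds on the shells, is routine once the mixed $\theta$-derivative bounds for $\chi^{\mathrm{tr}}_{N,\Gamma}$ are granted. The genuine subtlety is at $\theta=0$.
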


\begin{proof}
Throughout, split into $\pm\alpha>0$; we treat $\alpha>0$.  Decompose
the low frequency into anisotropic shells: choose
$\varphi_j\in C_c^\infty$ with
$\sum_{j\ge0}\varphi_j(\theta)=1$ on $\{0<\rho(\theta)\le\delta N\}$
and $\supp\varphi_j\subset\{\rho(\theta)\simeq\rho_j\}$, where
$\rho_j=2^{-j}\delta N$.  Write
$m_j=m_{N,1}\varphi_j$.

\smallskip
\emph{Step 1: geometry of the dyadic shells.}
On $\supp m_j$ one has $\beta^2\le4K^2N^{-1}\alpha^2$ and
$\alpha+\beta^2\simeq\rho_j$.  If $\alpha\le\rho_j/2$ then
$\beta^2\ge\rho_j/2$, whence
$\rho_j\le8K^2N^{-1}\alpha^2\le2K^2N^{-1}\rho_j^2$, i.e.\
$\rho_j\ge N/(2K^2)$, contradicting $\rho_j\le\delta N\le N/(4K^2)$.
Therefore
\begin{equation}\label{eq:alpha-comparable}
 \alpha\simeq\rho_j
 \qquad\text{on }\supp m_j,\ \text{for every }j\ge0.
\end{equation}
Consequently $\supp_\theta m_j$ is contained in an anisotropic box of
sides
\begin{equation}\label{eq:shell-scales}
 r_\alpha=C\rho_j,
 \qquad
 r_\beta=CKN^{-1/2}\rho_j,
\end{equation}
and direct differentiation of the four $\theta$-dependent factors of
$m_j$, namely $\beta$, $\psi_{\mathrm{tan}}$, $\varphi_j$, and
the translate $\int_0^1\partial_\eta a_N(\zeta+\tau\theta)\dd\tau$,
gives, on $\supp m_j$ and using \eqref{eq:alpha-comparable} together
with \eqref{eq:chart-symbol},
\begin{equation}\label{eq:shell-class}
 \abs{\partial_\alpha^{c}\partial_\beta^{d}
 \partial_\xi^{a}\partial_\eta^{b}m_j(\theta,\zeta)}
 \le C_{a,b,c,d}\,A_j\,
 r_\alpha^{-c}\,r_\beta^{-d}\,N^{-a}\,N^{-b/2},
 \qquad
 A_j:=\min\{K\rho_j,\;CK\delta^{1/2}N\}.
\end{equation}
(The size $A_j$ comes from
$\abs{m_{N,1}}\lesssim N^{1/2}\abs\beta
\le2K\alpha\simeq K\rho_j$; the second entry of the minimum is the
crude bound $N^{1/2}\abs\beta\le\delta^{1/2}N$ and is not used below.)

\smallskip
\emph{Step 2: separated expansion with tame coefficients.}
Fix $j$.  On the fixed rescaled high-frequency box containing
$\supp_\zeta m_j$ (the doubled band; recall $\delta\le c_0^2$), expand
$m_j$ in a Fourier series in $\zeta$ as in the proof of
\cref{lem:high-expansion}:
\begin{equation}\label{eq:tame-series}
 m_j(\theta,\zeta)
 =A_j\sum_{k\in\Z^2}c_{j,k}(\theta)\,
 e^{2\pi i(k_1\xi/N+k_2\eta/N^{1/2})}\,\psi_N(\zeta),
\end{equation}
with $\psi_N$ a fixed cutoff equal to $1$ on the box.  Because
\eqref{eq:shell-class} controls \emph{mixed} derivatives, integration
by parts in $\zeta$ yields, for every $M$ and every multi-index
$(c,d)$,
\[
 \abs{\partial_\alpha^{c}\partial_\beta^{d}c_{j,k}(\theta)}
 \le C_{M,c,d}\,\langle k\rangle^{-M}\,
 r_\alpha^{-c}\,r_\beta^{-d},
\]
and $c_{j,k}$ is supported in the box \eqref{eq:shell-scales}.  Hence
the inverse Fourier transform satisfies
\[
 \abs{\check c_{j,k}(x,y)}
 \le C_M\langle k\rangle^{-M}\,
 r_\alpha r_\beta\,
 \langle r_\alpha x\rangle^{-2}\langle r_\beta y\rangle^{-2},
 \qquad\text{so}\qquad
 \norm{\check c_{j,k}}_{L^1(\R^2)}\le C_M\langle k\rangle^{-M},
\]
uniformly in $j$ and $N$: the support widths and the derivative scales
in \eqref{eq:shell-scales} are exactly reciprocal.  Therefore the
multipliers $c_{j,k}(D)$ are bounded on $L^\infty$:
\begin{equation}\label{eq:tame-Linfty}
 \norm{c_{j,k}(D)h}_{L^\infty}
 \le C_M\langle k\rangle^{-M}\norm h_{L^\infty}.
\end{equation}
Since the exponential in \eqref{eq:tame-series} translates the high
factor and preserves its $L^2$ norm,
\begin{equation}\label{eq:per-shell}
 \norm{\B_{m_j}(f,g)}_{L^2}
 \le C\,A_j\,\norm{c_{j,\cdot}(D)f}_{\ell^1_kL^\infty}\norm g_2
 \le C\,A_j\,\norm f_{L^\infty}\norm g_2.
\end{equation}

\smallskip
\emph{Step 3: summation.}
For the shells with $\rho_j\ge1$ (there are at most $C\log N$ of
them), factor out one $x$ derivative: by
\eqref{eq:alpha-comparable}, $n_j:=m_j/(i\alpha)$ obeys
\eqref{eq:shell-class} with $A_j$ replaced by $C_K$, so Step 2 applied
to $n_j$ gives
\[
 \norm{\B_{m_j}(f,g)}_{2}
 =\norm{\B_{n_j}(\partial_xf,g)}_{2}
 \le C_K\norm{\partial_xf}_{L^\infty}\norm g_2.
\]
Summing these $O(\log N)$ shells produces the first term of
\eqref{eq:tangent-log}.  For the shells with $\rho_j<1$, use
\eqref{eq:per-shell} directly: since $A_j\le K\rho_j$ and
$\sum_{\rho_j<1}\rho_j\le2$, their total contribution is
$C_K\norm f_{L^\infty}\norm g_2$, the second term of
\eqref{eq:tangent-log}.  The weighted pairing bound follows from
Cauchy--Schwarz and $\abs{a_*}\le C$, uniformly in the centre.
\end{proof}

The absorption of the factor $1+\log N$ is carried out in the proof
of \cref{prop:smoothing}, where the strict inequality $\sigma<s$
supplies the summable weight $N^{-2(s-\sigma)}(1+\log N)\le C_{s-\sigma}$.

\subsection{The resonance function and a cubic correction}

Define
\begin{equation}\label{eq:Omega}
 \Omega(\theta,\zeta)
 =\omega(\theta)+\omega(\zeta)-\omega(\zeta+\theta).
\end{equation}

\begin{lemma}\label{lem:res-identity}
Assume $\xi>0$ and $\xi+\alpha>0$.  Then
\begin{equation}\label{eq:res-identity}
 \Omega(\theta,\zeta)
 =-v_x(\zeta)\alpha-v_y(\zeta)\beta
 +(\alpha^2-\alpha|\alpha|)
 -2\eta\alpha\beta-\xi\beta^2.
\end{equation}
The identity on $\xi<0$ follows by oddness of $\omega$.
\end{lemma}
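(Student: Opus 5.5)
The proof is a direct algebraic expansion. On the half-plane $\xi>0$ the phase is a polynomial, so I will expand each of the three terms of \eqref{eq:Omega} separately and then match the result against the group velocities \eqref{eq:velocities}. The only point needing care is the sign bookkeeping for the kink $\xi|\xi|$.

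First, since $\xi>0$ and $\xi+\alpha>0$, write $\omega(\zeta)=\xi\eta^2-\xi^2$ and $\omega(\zeta+\theta)=(\xi+\alpha)(\eta+\beta)^2-(\xi+\alpha)^2$. The sign of $\alpha$ is not restricted, so keep $\omega(\theta)=\alpha\beta^2-\alpha|\alpha|$ in that form. Expanding,
\[
 (\xi+\alpha)(\eta+\beta)^2=\xi\eta^2+2\xi\eta\beta+\xi\beta^2+\alpha\eta^2+2\alpha\eta\beta+\alpha\beta^2,
 \qquad
 (\xi+\alpha)^2=\xi^2+2\xi\alpha+\alpha^2 .
\]
Subtracting and adding $\omega(\theta)$, the terms $\xi\eta^2$, $\xi^2$ and $\alpha\beta^2$ cancel. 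This leaves
\[
 \Omega=2\xi\alpha-\alpha\eta^2-2\xi\eta\beta+(\alpha^2-\alpha|\alpha|)-2\alpha\eta\beta-\xi\beta^2 .
\]

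Second, I group the terms that are linear in $\theta$. On $\xi>0$, \eqref{eq:velocities} gives $v_x(\zeta)=\eta^2-2\xi$ and $v_y(\zeta)=2\xi\eta$. Hence $2\xi\alpha-\alpha\eta^2=-v_x(\zeta)\alpha$ and $-2\xi\eta\beta=-v_y(\zeta)\beta$. The remaining terms are exactly $(\alpha^2-\alpha|\alpha|)-2\eta\alpha\beta-\xi\beta^2$, which proves \eqref{eq:res-identity}.

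For $\xi<0$, note that $\omega(-\zeta)=-\omega(\zeta)$, so $\Omega(-\theta,-\zeta)=-\Omega(\theta,\zeta)$. Under this reflection the velocities transform as $v_x(-\zeta)=v_x(\zeta)$ and $v_y(-\zeta)=-v_y(\zeta)$. The formula on $\xi<0$ is therefore obtained by substitution, as the statement indicates.

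There is no real obstacle. The one thing to watch is not to simplify $\alpha|\alpha|$ using a sign for $\alpha$: only $\xi$ and $\xi+\alpha$ are assumed positive, and $\alpha$ may be negative, for instance in the tangential and normal regions of \eqref{eq:tan-normal}.
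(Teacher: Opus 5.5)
Your computation on $\{\xi>0,\ \xi+\alpha>0\}$ is correct and is exactly the paper's argument. You write $\omega$ as a polynomial on the half-plane, expand $\omega(\zeta+\theta)$, cancel the two $\alpha\beta^2$ terms, and read off the linear part via $v_x=\eta^2-2\xi$ and $v_y=2\xi\eta$. Keeping $\omega(\theta)=\alpha\beta^2-\alpha|\alpha|$ unsimplified is the right precaution.

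Your remark on $\xi<0$, however, rests on a false transformation rule.

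\begin{itemize}
\item Because $\omega$ is odd, $\nabla\omega$ is even. Hence $v_y(-\zeta)=2(-\xi)(-\eta)=v_y(\zeta)$, not $-v_y(\zeta)$; you are right that $v_x$ is even.
\item With your rule, substituting into $\Omega(\theta,\zeta)=-\Omega(-\theta,-\zeta)$ would give the transverse term as $+v_y(\zeta)\beta$, which is the wrong sign.
\item The hypothesis $\xi+\alpha>0$ must be reflected along with $\xi>0$, so the region is $\{\xi<0,\ \xi+\alpha<0\}$.
\end{itemize}

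With the correct rule, the reflection gives
\[
 \Omega(\theta,\zeta)=-v_x(\zeta)\alpha-v_y(\zeta)\beta-(\alpha^2+\alpha|\alpha|)-2\eta\alpha\beta-\xi\beta^2 .
\]
You can confirm this by expanding directly with $\omega(\zeta)=\xi\eta^2+\xi^2$ and $v_x=\eta^2+2\xi$ there. So on the negative half-plane the velocity terms and the $\zeta$-dependent quadratic terms keep the same form. The formula is not literally the same, however: the $\zeta$-independent term $\alpha^2-\alpha|\alpha|$ is replaced by $-(\alpha^2+\alpha|\alpha|)$. This is the precise content of the statement that the identity ``follows by oddness of $\omega$''.
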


\begin{proof}
On $\xi>0$, $\omega(\xi,\eta)=\xi\eta^2-\xi^2$.  Expand $\omega(\zeta+\theta)$ and cancel the two $\alpha\beta^2$ terms.  The remaining terms are exactly \eqref{eq:res-identity}.
\end{proof}

\begin{proposition}\label{prop:res-lower}
Let the constants be chosen as in
\eqref{eq:c0-resonance} and \eqref{eq:constant-order}.  On the support of
$m_{N,\Gamma}\psi_{\mathrm{nor}}$,
\begin{equation}\label{eq:res-lower}
 |\Omega(\theta,\zeta)|
 \ge cN^{3/2}|\beta|.
\end{equation}
\end{proposition}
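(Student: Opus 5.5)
The lower bound \eqref{eq:res-lower} will follow from the exact identity of \cref{lem:res-identity}. The idea is to show that $-v_y(\zeta)\beta$ is the dominant term and that every other term is at most a small fraction of it.

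By the oddness of $\omega$ we may assume $\xi>0$. Since $\rho(\theta)\le\delta N$ and $\xi\simeq N$ on the band \eqref{eq:band}, we have $|\alpha|\le\delta N\ll\xi$, so $\xi+\alpha>0$ and \eqref{eq:res-identity} applies. On the support of $m_{N,\Gamma}$ the high frequency lies in the enlarged band \eqref{eq:band}. There \eqref{eq:ygeometry} gives $|\eta|\simeq N^{1/2}$, and hence
\[
|v_y(\zeta)\beta|=2\xi|\eta||\beta|\ge c_1N^{3/2}|\beta|,
\]
with $c_1$ depending only on the fixed band constants and not on $K$, $c_0$ or $\delta$. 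The plan is to show that each remaining term in \eqref{eq:res-identity} is at most $\tfrac14 c_1N^{3/2}|\beta|$. This yields \eqref{eq:res-lower} with $c=c_1/4$.

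\textbf{The term $v_x(\zeta)\alpha$.} This is the main obstacle. On the band $|v_x(\zeta)|=|\eta^2-2\xi|\lesssim c_0N$, with the fixed enlargement absorbed into the constant. On the normal region, i.e.\ on the support of $\psi_{\mathrm{nor}}$, we have $|\alpha|<K^{-1}N^{1/2}|\beta|$, and this includes the case $\alpha=0$. Therefore
\[
|v_x\alpha|\lesssim c_0K^{-1}N^{3/2}|\beta|.
\]
This is exactly the error controlled by $c_{\mathrm{res}}(K)$. The choice \eqref{eq:c0-resonance} guarantees that, even at the largest band width $32c_0$ of the ladder of \cref{def:chart-ladder}, the error is at most a quarter of $c_1N^{3/2}|\beta|$. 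Some care is needed because $v_x(\zeta)$ is evaluated at $\zeta$ while the transition region is met at $\zeta+\tau\theta$. The shift changes $\eta^2-2\xi$ by $O(|\eta\beta|+\beta^2+|\alpha|)=O(\delta^{1/2}N)$, which is $\lesssim c_0N$ by \eqref{eq:constant-order}. So $\zeta$ stays in a band of comparable width, and the same estimate holds.

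\textbf{The quadratic terms.} Each is small by the smallness of $\delta$.
\begin{itemize}
\item First, $\alpha^2-\alpha|\alpha|$ vanishes for $\alpha>0$. For $\alpha<0$ it equals $2\alpha^2\le 2|\alpha|K^{-1}N^{1/2}|\beta|\le 2\delta K^{-1}N^{3/2}|\beta|$.
\item Next, $|2\eta\alpha\beta|\lesssim N^{1/2}\delta N|\beta|=\delta N^{3/2}|\beta|$.
\item Finally, $\xi\beta^2\lesssim N|\beta|\,\delta^{1/2}N^{1/2}=\delta^{1/2}N^{3/2}|\beta|$, using $|\beta|\le(\delta N)^{1/2}$.
\end{itemize}
Since $\delta\ll c_0^2$, and $c_0$ is itself small compared with $c_1$, the three contributions together are at most $\tfrac14c_1N^{3/2}|\beta|$.

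Combining the two groups of estimates with the lower bound for $|v_y\beta|$ proves \eqref{eq:res-lower}. The only delicate point is keeping the order of constants consistent: first $K$, then $c_{\mathrm{res}}(K)$, then $c_0$, then $\delta$. In particular $c_1$ must not depend on $c_0$, which holds because \eqref{eq:ygeometry} is uniform for small band widths.
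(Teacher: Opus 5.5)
Your proof is correct and is the paper's argument: the identity of \cref{lem:res-identity} makes $-v_y(\zeta)\beta$ the dominant term, and $v_x\alpha$ is controlled by the normal-sector condition $|\alpha|<K^{-1}N^{1/2}|\beta|$ together with the choice \eqref{eq:c0-resonance}, while the three quadratic terms are bounded by $\delta N^{3/2}|\beta|$, $\delta^{1/2}N^{3/2}|\beta|$ and $(\delta/K)N^{3/2}|\beta|$ exactly as in the paper. One bookkeeping remark: your stated plan of bounding each of the four error terms by $\tfrac14c_1N^{3/2}|\beta|$ would not close on its own, but your actual estimates group the errors into two sets, each bounded by $\tfrac14c_1N^{3/2}|\beta|$, which does close and gives $c=c_1/2$.
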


\begin{proof}
On the band, $|v_y|\ge c_1N^{3/2}$ and $|v_x|\le Cc_0N$.  The
definition of $\mathcal S_N^{\mathrm{nor}}$ gives
\[
 N^{3/2}|\beta|\ge KN|\alpha|.
\]
Thus $|v_x\alpha|$ is a small fraction of $N^{3/2}|\beta|$.  The remaining terms in \eqref{eq:res-identity} satisfy
\begin{align*}
 |2\eta\alpha\beta|&\lesssim\delta N^{3/2}|\beta|,\\
 |\xi\beta^2|&\lesssim\delta^{1/2}N^{3/2}|\beta|,\\
 |\alpha^2-\alpha|\alpha||&\lesssim(\delta/K)N^{3/2}|\beta|.
\end{align*}
The stated order of choices proves the result.
\end{proof}

With $\psi_{\mathrm{nor}}=1-\psi_{\mathrm{tan}}$ the cutoff
fixed in \eqref{eq:sector-partition}, define
\begin{equation}\label{eq:bN}
 b_N(\theta,\zeta)
 =\frac{m_{N,\Gamma}(\theta,\zeta)
 \psi_{\mathrm{nor}}(\theta)}{i\Omega(\theta,\zeta)}.
\end{equation}
By \cref{prop:res-lower},
\begin{equation}\label{eq:bN-size}
 |b_N(\theta,\zeta)|\lesssim N^{-1}.
\end{equation}

\begin{lemma}
\label{lem:bN-class}
On the support of $b_N$ (the doubled band in $\zeta$, the normal
sector with $\rho(\theta)\le\delta N$ in $\theta$, and a fixed
half-plane in each of $\xi$ and $\alpha$), the resonance function
satisfies
\begin{equation}\label{eq:Omega-derivatives}
\begin{gathered}
 \partial_\xi\Omega=2\alpha-2\eta\beta-\beta^2,
 \qquad
 \partial_\eta\Omega=-2\eta\alpha-2\xi\beta-2\alpha\beta,\\
 \partial_\eta^2\Omega=-2\alpha,
 \qquad
 \partial_\xi\partial_\eta\Omega=-2\beta,
 \qquad
 \partial_\xi^2\Omega=0,
\end{gathered}
\end{equation}
and consequently
\begin{equation}\label{eq:Omega-ratios}
 \abs{\partial_\xi\Omega}\le CN^{-1}\abs\Omega,
 \quad
 \abs{\partial_\eta\Omega}\le CN^{-1/2}\abs\Omega,
 \quad
 \abs{\partial_\eta^2\Omega}\le CN^{-1}\abs\Omega,
 \quad
 \abs{\partial_\xi\partial_\eta\Omega}\le CN^{-3/2}\abs\Omega.
\end{equation}
Therefore $b_N$ satisfies \eqref{eq:high-derivatives}:
\begin{equation}\label{eq:bN-highclass}
 \abs{\partial_\xi^a\partial_\eta^b\,b_N(\theta,\zeta)}
 \le C_{a,b}\,N^{-1-a-b/2},
 \qquad
 \text{uniformly in }\theta.
\end{equation}
No regularity of $b_N$ in $\theta$ is asserted.
\end{lemma}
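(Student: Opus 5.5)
The plan has three steps: compute the derivatives of $\Omega$ exactly, compare each of them with the lower bound of \cref{prop:res-lower}, and then apply the Leibniz and quotient rules to \eqref{eq:bN}.

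\emph{Step 1: exact derivatives.} On a fixed half-plane, say $\xi>0$ and $\xi+\alpha>0$, the identity \eqref{eq:res-identity} shows that $\Omega$ is a polynomial in $\zeta$ for fixed $\theta$. The term $\alpha^2-\alpha|\alpha|$ depends only on $\theta$, so it drops out of every $\zeta$-derivative. Differentiating $\Omega=\omega(\theta)+\omega(\zeta)-\omega(\zeta+\theta)$ with $\omega=\xi\eta^2-\xi^2$ gives
\[
\partial_\xi\Omega=\eta^2-2\xi-(\eta+\beta)^2+2(\xi+\alpha),
\]
which is the first formula in \eqref{eq:Omega-derivatives}. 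The remaining formulas follow the same way, and all derivatives of order at least three vanish except $\partial_\xi\partial_\eta^2\Omega=0$; in fact every third derivative is zero. The case $\xi<0$ follows from the oddness of $\omega$.

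\emph{Step 2: the ratios \eqref{eq:Omega-ratios}.} Use $|\Omega|\ge cN^{3/2}|\beta|$ from \cref{prop:res-lower}. On the support of $b_N$ we have $|\eta|\simeq N^{1/2}$, $|\xi|\simeq N$, $|\beta|\le(\delta N)^{1/2}$, and the normal-sector inequality $|\alpha|<K^{-1}N^{1/2}|\beta|$. These give:
\begin{itemize}
\item $|\partial_\xi\Omega|\lesssim N^{1/2}|\beta|$, since $|2\alpha|\lesssim N^{1/2}|\beta|$ and $\beta^2\le\delta^{1/2}N^{1/2}|\beta|$. This is $\lesssim N^{-1}|\Omega|$.
\item $|\partial_\eta\Omega|\lesssim N|\beta|$, since $|\eta\alpha|\lesssim N|\beta|$, $|\xi\beta|\simeq N|\beta|$, and $|\alpha\beta|\le|\alpha|(\delta N)^{1/2}$. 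This is $\lesssim N^{-1/2}|\Omega|$.
\item $|\partial_\eta^2\Omega|=2|\alpha|\lesssim N^{1/2}|\beta|\lesssim N^{-1}|\Omega|$.
\item $|\partial_\xi\partial_\eta\Omega|=2|\beta|\lesssim N^{-3/2}|\Omega|$.
\end{itemize}

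\emph{Step 3: symbol bounds for $b_N$.} Write $b_N=m_{N,\Gamma}\psi_{\mathrm{nor}}\cdot(i\Omega)^{-1}$. The factor $\psi_{\mathrm{nor}}$ depends only on $\theta$, so it is inert under $\zeta$-derivatives.

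For the numerator, $m_{N,\Gamma}=i\xi\beta\int_0^1\partial_\eta a_N(\zeta+\tau\theta)\,d\tau\cdot\chi^{\mathrm{tr}}_{N,\Gamma}$. By \eqref{eq:chart-symbol} and the mixed bounds on $\chi^{\mathrm{tr}}$, each $\partial_\xi$ costs $N^{-1}$ and each $\partial_\eta$ costs $N^{-1/2}$, uniformly in the translation $\tau\theta$ because $\rho(\theta)\le\delta N$ stays within the band scales. The factor $\xi$ costs $N^{-1}$ per $\xi$-derivative. Hence
\[
|\partial_\xi^a\partial_\eta^b m_{N,\Gamma}|\lesssim N^{1/2}|\beta|\,N^{-a-b/2}.
\]

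For the reciprocal, Faà di Bruno writes $\partial^{a,b}(1/\Omega)$ as a sum of terms $\Omega^{-1-k}\prod_{i=1}^k\partial^{\gamma_i}\Omega$ with $\sum_i\gamma_i=(a,b)$. Step 2 shows each factor satisfies $|\partial^{\gamma}\Omega|\lesssim N^{-\gamma_1-\gamma_2/2}|\Omega|$. The mixed derivative is even better than needed, and all third derivatives vanish. Therefore
\[
|\partial^{a,b}\Omega^{-1}|\lesssim N^{-a-b/2}|\Omega|^{-1}\lesssim N^{-a-b/2}\bigl(N^{3/2}|\beta|\bigr)^{-1}.
\]

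Combining the two bounds by the Leibniz rule gives
\[
|\partial_\xi^a\partial_\eta^b b_N|\lesssim N^{1/2}|\beta|\cdot N^{-3/2}|\beta|^{-1}N^{-a-b/2}=N^{-1-a-b/2},
\]
which is \eqref{eq:bN-highclass}. The bound is uniform in $\theta$, and no $\theta$-derivatives are taken.

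The main point to check carefully is Step 2: the normal-sector inequality and $\delta\ll1$ must control every term. The $\alpha\beta$ and $\beta^2$ terms need the constraint $\rho(\theta)\le\delta N$, and that constraint is exactly what \eqref{eq:constant-order} provides.
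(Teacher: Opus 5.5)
Your proposal is correct and follows the paper's proof essentially step for step: you compute the exact $\zeta$-derivatives of the degree-two polynomial $\Omega$, compare them with $|\Omega|\ge cN^{3/2}|\beta|$ using the normal-sector inequality and $\rho(\theta)\le\delta N$, and then apply Fa\`a di Bruno to $1/\Omega$ and the Leibniz rule against $|\partial_\xi^a\partial_\eta^b m_{N,\Gamma}|\lesssim N^{1/2}|\beta|N^{-a-b/2}$. Two small corrections: the bound $|\partial_\xi\partial_\eta\Omega|\le CN^{-3/2}|\Omega|$ is exactly what the $(1,1)$ derivative requires (the paper notes it is sharp), not better than needed; and the smallness of $\delta$ is used for the resonance lower bound of \cref{prop:res-lower}, not for the ratio estimates, which only need $\delta\lesssim1$.
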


\begin{proof}
The identities \eqref{eq:Omega-derivatives} follow by differentiating
\eqref{eq:res-identity} in $\zeta$ (the term
$\alpha^2-\alpha\abs\alpha$ is $\zeta$-independent).  For
\eqref{eq:Omega-ratios}, use \cref{prop:res-lower},
$\abs\Omega\ge cN^{3/2}\abs\beta$, together with
$\abs\alpha\le K^{-1}N^{1/2}\abs\beta$ and
$\abs\beta\le\delta^{1/2}N^{1/2}$: for instance
$\abs{2\alpha}\le2K^{-1}N^{1/2}\abs\beta\le CN^{-1}\abs\Omega$,
$\abs{2\eta\beta}\le CN^{1/2}\abs\beta\le CN^{-1}\abs\Omega$,
$\abs{\beta^2}\le\delta^{1/2}N^{1/2}\abs\beta\le CN^{-1}\abs\Omega$,
and similarly for the $\eta$ derivatives; the mixed second derivative
gives $2\abs\beta\le CN^{-3/2}\abs\Omega$, which is sharp.  Since
$\Omega$ is a polynomial of degree two in $\zeta$ on each half-plane,
Fa\`a di Bruno applied to $1/\Omega$ yields
$\abs{\partial_\xi^a\partial_\eta^b(1/\Omega)}
\le C_{a,b}N^{-a-b/2}\abs\Omega^{-1}$.  The factor
$m_{N,\Gamma}=i\xi\beta\int_0^1\partial_\eta
a_N(\zeta+\tau\theta)\dd\tau$ obeys
$\abs{\partial_\xi^a\partial_\eta^b m_{N,\Gamma}}
\le C_{a,b}N^{1/2}\abs\beta\,N^{-a-b/2}$ by
\eqref{eq:chart-symbol}, and the Leibniz rule combined with
\eqref{eq:bN-size} gives \eqref{eq:bN-highclass}.
\end{proof}
The angular cutoff is of Mikhlin type, rather than a classical
Coifman--Meyer symbol, in the low-frequency variable.  In the
arguments below it is never differentiated with respect to that
variable.

\subsection{Weighted cubic correction and adjacent localizations}

Let $a_*$ denote the appropriate scaled $x$- or $y$-weight.  For a
bilinear symbol $b$, introduce the trilinear form
\[
\Lambda_b(f,g,h)
=
\int_{\theta+\zeta+\zeta_3=0}
b(\theta,\zeta)
\widehat f(\theta)\widehat g(\zeta)\widehat h(\zeta_3)
\dd\theta\dd\zeta.
\]
The cubic correction is
\begin{equation}\label{eq:CN}
	\mathcal C_N(t)
	=
	\operatorname{Re}
	\Lambda_{b_N}(u_{\ll N},z_N,a_*z_N).
\end{equation}
Recall that
\[
\Ncal(u)=\frac12\partial_x(u^2),
\qquad
z_N=P_NJ^\sigma u,
\qquad
F_N=P_NJ^\sigma\Ncal(u).
\]
Since $P_{\ll N}$, $P_N$, and $J^\sigma$ commute with the linear
group, the three factors in \eqref{eq:CN} satisfy
\[
(\partial_t-i\omega(D))u_{\ll N}
=-P_{\ll N}\Ncal(u),
\]
\[
(\partial_t-i\omega(D))z_N=-F_N,
\]
and, because $a_*$ is independent of time,
\[
(\partial_t-i\omega(D))(a_*z_N)
=-a_*F_N-[i\omega(D),a_*]z_N.
\]
If $\theta+\zeta+\zeta_3=0$, the oddness of $\omega$ gives
\[
\omega(\theta)+\omega(\zeta)+\omega(\zeta_3)
=
\omega(\theta)+\omega(\zeta)-\omega(\theta+\zeta)
=
\Omega(\theta,\zeta).
\]
Consequently,
\begin{align*}
	\frac{\dd}{\dd t}\mathcal C_N
	={}&
	\operatorname{Re}
	\Lambda_{i\Omega b_N}(u_{\ll N},z_N,a_*z_N)\\
	&-\operatorname{Re}
	\Lambda_{b_N}(P_{\ll N}\Ncal(u),z_N,a_*z_N)\\
	&-\operatorname{Re}
	\Lambda_{b_N}(u_{\ll N},F_N,a_*z_N)\\
	&-\operatorname{Re}
	\Lambda_{b_N}(u_{\ll N},z_N,a_*F_N)\\
	&-\operatorname{Re}
	\Lambda_{b_N}
	\bigl(u_{\ll N},z_N,[i\omega(D),a_*]z_N\bigr).
\end{align*}
With the sign convention
\[
\mathcal T_{N,2}
=
-\operatorname{Re}
\Lambda_{i\Omega b_N}(u_{\ll N},z_N,a_*z_N)
\]
and
\[
\mathcal X_N
=
-\operatorname{Re}
\Lambda_{b_N}
\bigl(u_{\ll N},z_N,[i\omega(D),a_*]z_N\bigr),
\]
we obtain the exact identity
\begin{equation}\label{eq:C-identity}
	\frac{\dd}{\dd t}\mathcal C_N
	=-\mathcal T_{N,2}
	+\mathcal X_N+\mathcal R_N,
\end{equation}
where
\[
\mathcal R_N
=
\mathcal R_{N,0}
+\mathcal R_{N,1}
+\mathcal R_{N,2}
\]
and
\begin{align*}
	\mathcal R_{N,0}
	&=
	-\operatorname{Re}
	\Lambda_{b_N}
	\bigl(P_{\ll N}\Ncal(u),z_N,a_*z_N\bigr)\\
	&=
	-\operatorname{Re}
	\ip{\B_{b_N}(P_{\ll N}\Ncal(u),z_N)}{a_*z_N},\\[2mm]
	\mathcal R_{N,1}
	&=
	-\operatorname{Re}
	\Lambda_{b_N}(u_{\ll N},F_N,a_*z_N)\\
	&=
	-\operatorname{Re}
	\ip{\B_{b_N}(u_{\ll N},F_N)}{a_*z_N},\\[2mm]
	\mathcal R_{N,2}
	&=
	-\operatorname{Re}
	\Lambda_{b_N}(u_{\ll N},z_N,a_*F_N)\\
	&=
	-\operatorname{Re}
	\ip{\B_{b_N}(u_{\ll N},z_N)}{a_*F_N}.
\end{align*}
Thus $\mathcal R_{N,0}$ is generated when the nonlinearity falls on
the low-frequency factor, while $\mathcal R_{N,1}$ and
$\mathcal R_{N,2}$ arise from the two high-frequency factors.
There are no additional quartic terms, since the symbol $b_N$, the
frequency projections, and the spatial weight are independent of
time.

We next estimate the cross term $\mathcal X_N$.  Both high-frequency
factors must be placed in the local-smoothing norm; estimating either
one only in $L^2$ would lose a factor $N^{1/2}$.  To preserve the
frequency localization throughout the argument, we use the finite
sequence of enlarged cutoffs introduced above.

\begin{definition}
	\label{def:positive-ladder}
	For $\ell=0,1,2,3$, set
\[
	V_{\nu,N,\ell}
	=\Op\bigl(|v_\nu|^{1/2}\Theta_{\nu,N,\ell}\bigr).
\]
	For a $y$ chart, define
	\begin{equation}\label{eq:P-ladder}
		\mathcal P_{\nu,N,\ell}(T)
		=\sup_{y_0\in\R}\frac\mu R
		\int_0^T\!\!\int_{\R^2}
		a'\left(\frac{\mu(y-y_0)}R\right)
		|V_{\nu,N,\ell}z_N|^2\dd x\dd y\dd t,
	\end{equation}
	and use the analogous definition on an $x$ chart, with
	$\lambda\simeq N$ and the supremum taken over $x_0\in\R$.
	For $\ell=0,1,2$, the construction
	\eqref{eq:mN}-\eqref{eq:bN}, applied with
	$\Theta_{\nu,N,\ell}$ in place of the original chart cutoff, produces
	symbols $m_{N,\ell}$ and $b_{N,\ell}$, cubic corrections
	$\mathcal C_{N,\ell}$, normal commutator terms
	$\mathcal T_{N,\ell,2}$, cross terms $\mathcal X_{N,\ell}$, and
	quartic remainders $\mathcal R_{N,\ell}$.  The identity
	\eqref{eq:C-identity} holds with these level indices.  The resonance estimate,
	the high-variable symbol bounds, the boundary-correction estimate,
	and the quartic estimate below hold uniformly for
	$\ell=0,1,2$.  At the last level,
	\begin{equation}\label{eq:P3-crude}
		\mathcal P_{\nu,N,3}(T)
		\lesssim_R N^2T\norm{z_N}_{L_T^\infty L^2}^2.
	\end{equation}
	Indeed, on a $y$ chart,
	\[
	\frac\mu R\norm{V_{\nu,N,3}z_N}_2^2
	\lesssim_R
	N^{1/2}N^{3/2}\norm{z_N}_2^2
	=
	N^2\norm{z_N}_2^2,
\]
	while on an $x$ chart the corresponding bound is
	\[
	\frac\lambda R\norm{V_{\nu,N,3}z_N}_2^2
	\lesssim_R
	NN\norm{z_N}_2^2
	=
	N^2\norm{z_N}_2^2.
\]
	Here the $x$-chart bound uses the defining property of
	\cref{def:chart-ladder}: every auxiliary level $\ell\ge1$ is
	confined to the enlarged transition band, where $|v_x|\simeq N$.
	This is why the present $N^2$ bound is stronger than the crude
	level-zero estimate \eqref{eq:positive-finite}, for which
	$|v_x|$ may reach size $N^2$ on the full chart.
	Integration in time proves \eqref{eq:P3-crude}.
\end{definition}

The cubic corrections are perturbative at the energy level. By
\cref{def:positive-ladder}, the support and high-variable symbol
bounds established in \cref{lem:bN-class} hold uniformly for
$b_{N,\ell}$, with $\ell\in\{0,1,2\}$. Therefore,
\cref{lem:high-expansion}, applied with $m_N=b_{N,\ell}$, gives
\[
\norm{
\B_{b_{N,\ell}}(u_{\ll N}(t),z_N(t))
}_2
\lesssim_s
N^{-1}
\norm{u_{\ll N}(t)}_{H^s}
\norm{z_N(t)}_2.
\]
Since $P_{\ll N}$ is uniformly bounded on $H^s$ and the scaled weight
satisfies
\[
\norm{a_*}_{L^\infty}
\le
\norm{a}_{L^\infty},
\]
the definition of $\mathcal C_{N,\ell}$ and the Cauchy--Schwarz
inequality yield
\[
\begin{aligned}
|\mathcal C_{N,\ell}(t)|
&\le
\norm{
\B_{b_{N,\ell}}(u_{\ll N}(t),z_N(t))
}_2
\norm{a_*z_N(t)}_2
\\
&\lesssim_s
N^{-1}
\norm{u(t)}_{H^s}
\norm{z_N(t)}_2^2.
\end{aligned}
\]
Recalling that $z_N=P_NJ^\sigma u$, Littlewood--Paley orthogonality
gives
\[
\begin{aligned}
\sum_{N\ge2}
N^{-1}\norm{z_N(t)}_2^2
&=
\sum_{N\ge2}
N^{-1}
\norm{P_NJ^\sigma u(t)}_2^2
\\
&\lesssim
\norm{u(t)}_{H^{\sigma-1/2}}^2.
\end{aligned}
\]
Consequently, uniformly for $\ell\in\{0,1,2\}$ and uniformly with
respect to the centre of the spatial weight,
\begin{equation}\label{eq:C-size}
\begin{aligned}
\sup_{0\le t\le T}
\sum_{N\ge2}
|\mathcal C_{N,\ell}(t)|
&\lesssim_s
\norm{u}_{L_T^\infty H^s}
\norm{u}_{L_T^\infty H^{\sigma-1/2}}^2
\\
&\lesssim_s
E_s(T)^3.
\end{aligned}
\end{equation}
Here the last inequality follows from $\sigma-\frac12\le s$.
Thus, in the small-data regime $E_s(T)\ll1$, the cubic corrections
are perturbative relative to the quadratic localized energies.

\begin{proposition}\label{prop:crossterm}
	Let $N\ge2$, let $\nu$ be one of the microlocal charts, and let
	$\ell\in\{0,1,2\}$. Then, uniformly with respect to the centre of the
	spatial weight,
	\begin{equation}\label{eq:cross-final}
		\int_0^T|\mathcal X_{N,\ell}(t)|\,\dd t
		\le
		C_RN^{-1}E_s(T)\mathcal P_{\nu,N,\ell+1}(T)
		+
		C_RT N^{-1/2}E_s(T)
		\norm{z_N}_{L_T^\infty L^2}^2.
	\end{equation}
\end{proposition}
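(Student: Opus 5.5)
We first write the weight commutator exactly, as in the proof of \cref{prop:positive}: on a $y$ chart, by \eqref{eq:exact-y-comm}, $[i\omega(D),a_*]z_N=\tfrac12(M_{a_*'}v_y(D)+v_y(D)M_{a_*'})z_N$ with $a_*'=h\,w_{y_0}^2$, $h=\mu/R$. On an $x$ chart the same formula holds with $v_x$, plus the Hilbert remainder $\mathcal R_b$ of \cref{lem:hilbert-weight}. Since $b_{N,\ell}$ is supported where $\zeta$ lies in the (doubled) transition band of $\Theta_{\nu,N,\ell}$, and $\zeta_3=-(\theta+\zeta)$ lies within $O(\delta N)$ of it, we may insert $\Theta_{\nu,N,\ell+1}(D)$ in front of both high factors at no cost. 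This is exactly what \cref{def:chart-ladder} guarantees.

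We then factor $v_\nu=\tau|v_\nu|^{1/2}|v_\nu|^{1/2}$ on the support of $\Theta_{\nu,N,\ell+1}$, using the smooth square root $q$ from the proof of \cref{prop:positive}. One factor $|v_\nu|^{1/2}$ goes on $a_*'z_N$; commuting $q(D)$ past $M_{w^2}$ and $M_{w}$ produces $h^{1/2}w\,V_{\nu,N,\ell+1}z_N$ plus errors. The other factor $q$ is transferred onto $\B_{b_{N,\ell}}(u_{\ll N},z_N)$ through the adjoint. Commuting $q(D)$ with the bilinear operator changes the symbol to $b_{N,\ell}(\theta,\zeta)q(\zeta+\theta)/q(\zeta)$, which still obeys \eqref{eq:high-derivatives} because $q$ satisfies the relative derivative bounds and $\rho(\theta)\le\delta N$. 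Next we commute $M_w$ across $\B$. This gives $w\,\B_{\tilde b}(u_{\ll N},V_{\nu,N,\ell+1}z_N)$ up to errors involving one spatial derivative of $w$ times a $\zeta$-derivative of the symbol, each gaining $h\cdot N^{-1}$ or $h\cdot N^{-1/2}\lesssim R^{-1}$. Applying \cref{lem:high-expansion} in the weighted form (the Fourier series translates the high factor by $O(N^{-1})$ or $O(N^{-1/2})$, which shifts the weight centre by at most $O(1/R)$ of its scale and is absorbed by a translated-strip sum, as in \eqref{eq:hilbert-strip}), we obtain
\[
|\mathcal X_{N,\ell}(t)|\lesssim N^{-1}\|u(t)\|_{H^s}\sum_{d}\langle d\rangle^{-2}\,h\,\|w_{y_0+d/h}V_{\nu,N,\ell+1}z_N(t)\|_2^2+\text{errors}.
\]
Integrating in time and taking the supremum over centres gives the first term of \eqref{cross-final}, $C_RN^{-1}E_s(T)\mathcal P_{\nu,N,\ell+1}(T)$.

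The error terms are those in which a commutator consumes a derivative of $w$, or the $\mathcal R_b$ contribution on $x$ charts. Each retains at most one factor $|v_\nu|^{1/2}$ and carries the extra factor $h\,N^{-1/2}$ or better. We therefore bound them by Cauchy--Schwarz, with one factor in $L^2$ and the other crudely: for example $N^{-1}E_s\cdot h\,N^{3/4}\cdot N^{-1/2}\cdot N^{3/4}\|z_N\|_2^2$ on $y$ charts, with $h\simeq N^{1/2}$. This yields $N^{-1/2}E_s\|z_N\|_2^2$, and integration in time gives the second term. On $x$ charts at levels $\ell\ge1$ we use $|v_x|\simeq N$; at level $0$, the support of $b_N$ already lies in the transition band, so the same bound holds.

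\textbf{Main obstacle.} The hardest step is keeping the bookkeeping of the lower-order commutators precise enough that each error gains at least $N^{-1/2}$ relative to the crude $L^2$ bound, since $b_{N,\ell}$ has no regularity in $\theta$. We would handle this by performing every commutation in the high variable only, via the Fourier-series expansion \eqref{eq:high-series}, so that $\theta$-derivatives never appear.
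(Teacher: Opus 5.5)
Your treatment of the principal term (factor $v_\nu=\tau q^2$, move one $q$ onto $\B_{b_{N,\ell}}(u_{\ll N},z_N)$, expand in the high variable, use translated strips) follows the paper's Steps 1--3. The gap is in how you handle the error terms. Each commutator in your scheme, whether of $q(D)$ with $M_w$ or of $M_w$ with $\B_{\tilde b}$, pairs one derivative of the weight with one high-variable derivative of a symbol. You note yourself that the net factor is only $hN^{-1/2}\simeq R^{-1}$ on a $y$ chart, and $(\lambda/R)N^{-1}\simeq R^{-1}$ on an $x$ chart. That factor contains no power of $N$, and this is exactly why these terms cannot be estimated crudely. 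They are of the same order in $N$ as the principal term, whose crude $L^2$ size is $hN^{-1}E_s\norm{Vz_N}_2^2\simeq NR^{-1}E_s\norm{z_N}_2^2$.

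Concretely, $h\ip{[q,w]G}{wqz_N}$ is only bounded crudely by $h\cdot hN^{1/4}\cdot N^{-1}E_s\cdot N^{3/4}\norm{z_N}_2^2\simeq NR^{-2}E_s\norm{z_N}_2^2$. Even your displayed product $N^{-1}\cdot hN^{3/4}\cdot N^{-1/2}\cdot N^{3/4}$ with $h\simeq N^{1/2}$ equals $N^{1/2}$, not $N^{-1/2}$. Such terms would require $H^{\sigma+1/2}$ control after summation in $N$, so they do not fit \eqref{eq:cross-final}. Going to higher order does not help either: by \eqref{eq:commutator-orders}, $(\mu/R)^m\norm{\Op(\partial_\eta^m\upsilon_N)}_{L^2\to L^2}\lesssim R^{-m}N^{3/4}$ for every $m$. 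No finite symbolic expansion, including its remainder, produces a gain in $N$.

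The missing idea is that the weight scale is exactly dual to the frequency localization. Consequently every commutator error must stay in the strip-localized smoothing norm rather than be estimated in $L^2$. Concretely:
\begin{enumerate}
\item Write each error as a bilinear operator acting on $V_{\nu,N,\ell+1}z_N$. Do this by dividing by $\upsilon_N(\zeta)\simeq N^{3/4}$, which gives the paper's symbols $b^{\mathrm{out}}_{\ell,m}$ of size $N^{-1-m/2}$.
\item Dominate the derivatives of the weight profile by translated copies of $w_*$.
\item Apply the high-variable expansion together with \eqref{eq:translated-strip} to both high-frequency factors.
\end{enumerate}
This yields $CR^{-m}N^{-1}E_s(T)\mathcal P_{\nu,N,\ell+1}(T)$, which is of the form of the first term in \eqref{eq:cross-final}.

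The paper handles the orders $m=1,2,3$, the fourth-order remainder $\mathcal E_{4,N}$, and the support error this way. It reserves the crude term $C_RTN^{-1/2}E_s(T)\norm{z_N}_{L_T^\infty L^2}^2$ for the terminal remainder of the subprincipal piece $-i(\mu/R)^2a_*''D_x$. That piece is treated after writing $D_x\Theta_{\ell+1}=WV$, so that both of its factors also carry $V$. The obstacle you single out, the lack of $\theta$-regularity of $b_{N,\ell}$, is already handled by the high-variable Fourier series and is not where your argument breaks.
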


\begin{proof}
	We first consider a $y$ chart. The corresponding argument for an $x$
	chart is given in Step 5. Set
	\[
	a_*
	=
	a\left(\frac{\mu(y-y_0)}{R}\right),
	\qquad
	a_*'
	=
	a'\left(\frac{\mu(y-y_0)}{R}\right),
	\qquad
	a_*''
	=
	a''\left(\frac{\mu(y-y_0)}{R}\right),
	\]
	where
	\[
	\mu\simeq N^{1/2},
	\qquad
	\Theta_\ell=\Theta_{\nu,N,\ell},
	\]
	and define
	\[
	G_\ell
	=
	\B_{b_{N,\ell}}(u_{\ll N},z_N).
	\]
	Here the primes in $a_*'$ and $a_*''$ refer to derivatives of the
	unscaled profile $a$, evaluated at $\mu(y-y_0)/R$.
	
	\smallskip
	\noindent
	\emph{Step 1: decomposition of the weighted commutator.}
	By \eqref{eq:exact-y-comm},
	\[
	[i\omega(D),a_*]
	=
	\frac{2\mu}{R}a_*'D_xD_y
	-
	i\frac{\mu^2}{R^2}a_*''D_x
	=
	\mathcal T_1'+\mathcal T_2'.
	\]
	Accordingly, $\mathcal X_{N,\ell}$ is the sum of the pairings of
	$G_\ell$ with $\mathcal T_1'z_N$ and $\mathcal T_2'z_N$. The operator
	$\mathcal T_1'$ contains the principal local-smoothing contribution.
	On the parabolic block, the second term has the additional relative
	factor
	\[
	\frac{\mu}{R|\eta|}
	\simeq
	R^{-1},
	\]
	and will be treated separately in Step 4.
	
	Define
	\[
	V
	=
	V_{\nu,N,\ell+1}
	=
	\Op\left(
	|v_y|^{1/2}\Theta_{\ell+1}
	\right).
	\]
	By \cref{def:chart-ladder}, $\Theta_{\ell+1}$ is identically one on
	the high-frequency input and output supports of $b_{N,\ell}$.
	Moreover, $v_y$ has a fixed sign, denoted by $\tau$, on these
	supports. Since
	\[
	2D_xD_y=v_y(D),
	\]
	the principal part of $\mathcal T_1'z_N$, when paired with $G_\ell$,
	can be written as
	\[
	\mathcal T_1'z_N
	=
	\tau\frac{\mu}{R}a_*'V^2z_N
	+
	\mathcal E_{\ell,N}z_N.
	\]
	Here $\mathcal E_{\ell,N}$ denotes the support error generated by
	$1-\Theta_{\ell+1}^2(D)$.
	
	This decomposition does not require $a_*'z_N$ to be frequency
	localized. Indeed, after moving
	$1-\Theta_{\ell+1}^2(D)$ by adjoint onto the output of
	$\B_{b_{N,\ell}}$, its principal contribution vanishes because
	$\Theta_{\ell+1}=1$ on that output support. The remaining expression
	is a commutator with $a_*'$. It contains one derivative of the
	high-frequency cutoff and one derivative of the scaled spatial
	weight, and therefore belongs to the strip-localized class estimated
	below.
	
	Since $|v_y|\simeq N^{3/2}$ on the parabolic block, introduce the
	bilinear symbol
	\begin{equation}\label{eq:bpp-def}
		b_{\ell}^{V}(\theta,\zeta)
		=
		\frac{
			|v_y(\theta+\zeta)|^{1/2}
			\Theta_{\ell+1}(\theta+\zeta)
		}{
			|v_y(\zeta)|^{1/2}
		}
		b_{N,\ell}(\theta,\zeta).
	\end{equation}
	In \eqref{eq:bpp-def}, we have used
	$\Theta_{\ell+1}(\zeta)=1$ on $\supp b_{N,\ell}$. The symbol
	$b_{\ell}^{V}$ has size $O(N^{-1})$ and satisfies the same
	high-variable derivative estimates as $b_{N,\ell}$. Moreover,
	\begin{equation}\label{eq:V-through}
		V\B_{b_{N,\ell}}(u_{\ll N},z_N)
		=
		\B_{b_{\ell}^{V}}(u_{\ll N},Vz_N).
	\end{equation}
	
	We shall repeatedly use the following strip-localization estimate.
	Choose $\psi_N$ equal to one on the relevant high-frequency block and
	supported strictly inside the fixed periodization cell. If $\tau_k$
	denotes translation by
	\[
	\left(
	\frac{2\pi k_1}{N},
	\frac{2\pi k_2}{N^{1/2}}
	\right),
	\qquad
	k=(k_1,k_2)\in\mathbb Z^2,
	\]
	then
	\begin{equation}\label{eq:translated-strip}
		\begin{aligned}
			&\norm{
				w_*\tau_k\bigl(\psi_N(D)Vz_N\bigr)
			}_{L^2([0,T]\times\mathbb R^2)}
			\\
			&\qquad\le
			C_M\langle k\rangle^2
			\sup_{y_0'\in\mathbb R}
			\norm{
				\psi_0\left(\frac{\mu(\cdot-y_0')}{R}\right)Vz_N
			}_{L^2([0,T]\times\mathbb R^2)},
		\end{aligned}
	\end{equation}
	where
	\[
	w_*
	=
	\psi_0\left(\frac{\mu(y-y_0)}{R}\right).
	\]
	Indeed, the kernel of $\psi_N(D)$ decays on the spatial scales
	$(N^{-1},N^{-1/2})$. These scales are no larger than the width
	$R/\mu\simeq RN^{-1/2}$ of the $y$ strips. The covering by strips is
	independent of time, and the translation $\tau_k$ only changes the
	strip centre. The polynomial factor in $k$ absorbs the finitely
	overlapping translated strips and the rapidly decreasing kernel
	tails.
	
	\smallskip
	\noindent
	\emph{Step 2: symmetrization and explicit commutator contributions.}
	Since $a_*'=w_*^2$ and $V$ is self-adjoint, we have
	\begin{align}\label{eq:sym-split}
		\ip{G_\ell}{a_*'V^2z_N}
		={}&
		\ip{w_*VG_\ell}{w_*Vz_N}
		+
		\ip{[V,w_*]G_\ell}{w_*Vz_N}
		\notag\\
		&+
		\ip{G_\ell}{w_*[w_*,V]Vz_N}.
	\end{align}
	Set
	\[
	\upsilon_N(\zeta)
	=
	|v_y(\zeta)|^{1/2}
	\Theta_{\ell+1}(\zeta),
	\qquad
	V=\Op(\upsilon_N).
	\]
	For $m=1,2,3$, define the scaled profile derivatives
	\[
	w_{*,m}(y)
	=
	w^{(m)}
	\left(
	\frac{\mu(y-y_0)}{R}
	\right)
	\]
	and the corresponding band operators
	\[
	A_{m,N}
	=
	M_{w_{*,m}}
	\Op\left(\partial_\eta^m\upsilon_N\right).
	\]
	Because $w_*$ depends only on $y$, the Kohn--Nirenberg composition
	formula, expanded through three anisotropic derivatives, yields
	\begin{equation}\label{eq:three-commutators}
		[V,w_*]
		=
		\sum_{m=1}^{3}
		c_m
		\left(\frac{\mu}{R}\right)^m
		A_{m,N}
		+
		\mathcal E_{4,N},
	\end{equation}
	where $c_1,c_2,c_3$ are universal constants and
	$\mathcal E_{4,N}$ is the fourth-order symbolic remainder. Since
	$[w_*,V]=-[V,w_*]$, substitution of
	\eqref{eq:three-commutators} into \eqref{eq:sym-split} gives the two
	explicit commutator contributions
	\begin{equation}\label{eq:commutator-catalogue}
		\begin{aligned}
			&\tau c_m
			\frac{\mu}{R}
			\left(\frac{\mu}{R}\right)^m
			\ip{
				M_{w_{*,m}}
				\Op(\partial_\eta^m\upsilon_N)G_\ell
			}{
				w_*Vz_N
			},
			\\
			&-\tau c_m
			\frac{\mu}{R}
			\left(\frac{\mu}{R}\right)^m
			\ip{
				G_\ell
			}{
				w_*M_{w_{*,m}}
				\Op(\partial_\eta^m\upsilon_N)Vz_N
			},
			\qquad
			m=1,2,3.
		\end{aligned}
	\end{equation}
	
	On the parabolic block,
	\begin{equation}\label{eq:commutator-orders}
		\left|
		\partial_\eta^m\upsilon_N(\zeta)
		\right|
		\lesssim
		N^{3/4-m/2},
		\qquad
		m=0,1,2,3.
	\end{equation}
	Consequently,
	\[
	\left(\frac{\mu}{R}\right)^m
	\norm{
		\Op(\partial_\eta^m\upsilon_N)
	}_{L^2\to L^2}
	\lesssim
	R^{-m}
	N^{m/2}
	N^{3/4-m/2}
	=
	R^{-m}N^{3/4}.
	\]
	Thus the commutator expansion does not reduce the frequency order of
	$V$. Each contribution retains order $N^{3/4}$ but gains a factor
	$R^{-m}$. It must therefore be estimated in the same strip-localized
	norms as the principal term.
	
	To make this reduction explicit, define, on the support of
	$b_{N,\ell}$,
	\[
	b_{\ell,m}^{\mathrm{out}}(\theta,\zeta)
	=
	\frac{
		\partial_\eta^m\upsilon_N(\theta+\zeta)
	}{
		\upsilon_N(\zeta)
	}
	b_{N,\ell}(\theta,\zeta),
	\qquad
	m=1,2,3.
	\]
	The quotient is well defined on the relevant support because
	$\upsilon_N(\zeta)\simeq N^{3/4}$. It may be extended smoothly away
	from that support. By \eqref{eq:commutator-orders} and the
	high-variable estimates for $b_{N,\ell}$,
	\[
	\left|
	\partial_\xi^p\partial_\eta^q
	b_{\ell,m}^{\mathrm{out}}(\theta,\zeta)
	\right|
	\lesssim_{p,q}
	N^{-1-m/2-p-q/2}.
	\]
	Moreover,
	\[
	\Op(\partial_\eta^m\upsilon_N)
	G_\ell
	=
	\B_{b_{\ell,m}^{\mathrm{out}}}
	(u_{\ll N},Vz_N).
	\]
	Multiplication by $(\mu/R)^m$, together with
	$\mu\simeq N^{1/2}$, therefore gives
	\[
	\left(\frac{\mu}{R}\right)^m
	N^{-1-m/2}
	\lesssim
	R^{-m}N^{-1}.
	\]
	
	For the second pairing in \eqref{eq:commutator-catalogue}, we move
	$\Op(\partial_\eta^m\upsilon_N)$ by adjoint onto the output of
	$G_\ell$. The resulting principal bilinear symbol is again
	$b_{\ell,m}^{\mathrm{out}}$. Commuting this operator through the
	product $w_*w_{*,m}$ differentiates the spatial profiles and produces
	one additional factor $\mu/R$ together with one additional
	high-frequency derivative. Such terms therefore have an extra factor
	$R^{-1}$ and satisfy a stronger estimate than the principal
	contribution of order $m$.
	
	The derivatives $w_{*,m}$ are Schwartz profiles and can be bounded by
	rapidly convergent sums of translated copies of $w_*$. Applying the
	high-variable Fourier expansion and
	\eqref{eq:translated-strip} to the two high-frequency factors gives
	two copies of
	\[
	\left(\frac{R}{\mu}\right)^{1/2}
	\mathcal P_{\nu,N,\ell+1}(T)^{1/2}.
	\]
	It follows that, for each $m=1,2,3$,
	\begin{equation}\label{eq:commutator-nonprincipal}
		\begin{aligned}
			&\int_0^T
			\Bigg[
			\left|
			\tau c_m
			\frac{\mu}{R}
			\left(\frac{\mu}{R}\right)^m
			\ip{
				M_{w_{*,m}}
				\Op(\partial_\eta^m\upsilon_N)G_\ell
			}{
				w_*Vz_N
			}
			\right|
			\\
			&\qquad\qquad+
			\left|
			\tau c_m
			\frac{\mu}{R}
			\left(\frac{\mu}{R}\right)^m
			\ip{
				G_\ell
			}{
				w_*M_{w_{*,m}}
				\Op(\partial_\eta^m\upsilon_N)Vz_N
			}
			\right|
			\Bigg]\,\dd t
			\\
			&\qquad\le
			CR^{-m}N^{-1}E_s(T)
			\mathcal P_{\nu,N,\ell+1}(T).
		\end{aligned}
	\end{equation}
	
	The fourth-order remainder in \eqref{eq:three-commutators} produces
	the two explicit pairings
	\[
	\tau\frac{\mu}{R}
	\ip{\mathcal E_{4,N}G_\ell}{w_*Vz_N}
	\]
	and
	\[
	-\tau\frac{\mu}{R}
	\ip{G_\ell}{w_*\mathcal E_{4,N}Vz_N}.
	\]
	The operator $\mathcal E_{4,N}$ has the same band localization and
	the same reciprocal kernel scales $(N^{-1},N^{-1/2})$ as the preceding
	terms. The strip-routing argument applied to these two expressions
	gives
	\begin{equation}\label{eq:commutator-terminal}
		\begin{aligned}
			\frac{\mu}{R}
			\int_0^T
			\Big(
			&
			\left|
			\ip{\mathcal E_{4,N}G_\ell}{w_*Vz_N}
			\right|
			\\
			&+
			\left|
			\ip{G_\ell}{w_*\mathcal E_{4,N}Vz_N}
			\right|
			\Big)\,\dd t
			\le
			CR^{-4}N^{-1}E_s(T)
			\mathcal P_{\nu,N,\ell+1}(T).
		\end{aligned}
	\end{equation}
	For $R$ sufficiently large, this contribution is subordinate to the
	principal estimate.
	
	The support error $\mathcal E_{\ell,N}$ from Step 1 produces the
	pairing
	\[
	\ip{G_\ell}{\mathcal E_{\ell,N}z_N}.
	\]
	As explained above, this error contains one derivative of
	$1-\Theta_{\ell+1}^2$ and one derivative of the scaled weight $a_*'$.
	The same argument used for $m=1$ in
	\eqref{eq:commutator-nonprincipal} gives
	\[
	\int_0^T
	\left|
	\ip{G_\ell}{\mathcal E_{\ell,N}z_N}
	\right|\,\dd t
	\le
	CR^{-1}N^{-1}E_s(T)
	\mathcal P_{\nu,N,\ell+1}(T).
	\]
	Hence the complete commutator expansion, including the fourth-order
	remainder and the support error, contributes only to the first term
	on the right-hand side of \eqref{eq:cross-final}. In particular, it
	does not produce an additional $N^{-1/2}$ remainder.
	
	\smallskip
	\noindent
	\emph{Step 3: estimate of the principal term.}
	By \cref{lem:high-expansion},
	\[
	w_*VG_\ell
	=
	N^{-1}
	\sum_{k\in\mathbb Z^2}
	w_*\bigl(c_k(D)u_{\ll N}\bigr)
	\tau_k\bigl(\psi_N(D)Vz_N\bigr),
	\]
	where, for every $M>0$,
	\[
	\sup_\theta|c_k(\theta)|
	\le
	C_M\langle k\rangle^{-M}.
	\]
	Plancherel's theorem and the Sobolev embedding
	$H^s(\mathbb R^2)\hookrightarrow L^\infty(\mathbb R^2)$ imply
	\[
	\norm{
		c_k(D)u_{\ll N}
	}_{L^\infty}
	\le
	C_M\langle k\rangle^{-M}E_s(T).
	\]
	Using \eqref{eq:translated-strip}, choosing $M$ sufficiently large,
	and summing over $k\in\mathbb Z^2$, we obtain
	\[
	\int_0^T
	\left|
	\ip{w_*VG_\ell}{w_*Vz_N}
	\right|\,\dd t
	\le
	CN^{-1}E_s(T)
	\frac{R}{\mu}
	\mathcal P_{\nu,N,\ell+1}(T).
	\]
	Restoring the prefactor $\mu/R$ from $\mathcal T_1'$ yields
	\[
	\frac{\mu}{R}
	\int_0^T
	\left|
	\ip{w_*VG_\ell}{w_*Vz_N}
	\right|\,\dd t
	\le
	CN^{-1}E_s(T)
	\mathcal P_{\nu,N,\ell+1}(T).
	\]
	By Step 2, all commutator and support-error contributions satisfy the
	same bound, with at least one additional power of $R^{-1}$.
	
	\smallskip
	\noindent
	\emph{Step 4: estimate of the subprincipal term.}
	Retain the notation
	\[
	\upsilon_N(\zeta)
	=
	|v_y(\zeta)|^{1/2}
	\Theta_{\ell+1}(\zeta),
	\qquad
	V=\Op(\upsilon_N).
	\]
	On the relevant support,
	\[
	\upsilon_N(\zeta)\simeq N^{3/4}.
	\]
	After inserting $\Theta_{\ell+1}$ on the high input and output
	frequencies, write
	\[
	D_x\Theta_{\ell+1}(D)=WV,
	\]
	where
	\[
	W
	=
	\Op\left(
	\frac{
		\xi\Theta_{\ell+1}
	}{
		|v_y|^{1/2}
	}
	\right).
	\]
	A chart sign may be inserted in this identity, but it is irrelevant
	for the estimates. Since $|\xi|\simeq N$ and
	$|v_y|\simeq N^{3/2}$, the symbol of $W$ has size $N^{1/4}$.
	
	To place both high-frequency factors in the local-smoothing norm,
	write the high-frequency input of $G_\ell$ as
	\[
	z_N
	=
	\upsilon_N(D)^{-1}Vz_N
	\]
	on the support of $b_{N,\ell}$, and move $W$ from the weighted factor
	to the output of the bilinear operator. The resulting principal
	bilinear symbol is
	\[
	d_\ell(\theta,\zeta)
	=
	\frac{
		(\xi+\alpha)
		\Theta_{\ell+1}(\theta+\zeta)
	}{
		|v_y(\theta+\zeta)|^{1/2}
		|v_y(\zeta)|^{1/2}
	}
	b_{N,\ell}(\theta,\zeta).
	\]
	The numerator is $O(N)$, each denominator is comparable to
	$N^{3/4}$, and $b_{N,\ell}=O(N^{-1})$. Therefore,
	\[
	|d_\ell(\theta,\zeta)|
	\lesssim
	N\cdot N^{-3/4}\cdot N^{-3/4}\cdot N^{-1}
	=
	N^{-3/2}.
	\]
	The high-variable symbol estimates also give
	\[
	\left|
	\partial_\xi^p\partial_\eta^q
	d_\ell(\theta,\zeta)
	\right|
	\lesssim_{p,q}
	N^{-3/2-p-q/2}.
	\]
	The high-variable Fourier expansion, followed by
	\eqref{eq:translated-strip}, yields
	\[
	\int_0^T
	\left|
	\ip{
		\B_{d_\ell}(u_{\ll N},Vz_N)
	}{
		a_*''Vz_N
	}
	\right|\,\dd t
	\lesssim_R
	N^{-3/2}E_s(T)
	\frac{R}{\mu}
	\mathcal P_{\nu,N,\ell+1}(T).
	\]
	Restoring the prefactor $\mu^2/R^2$ from $\mathcal T_2'$ and using
	$\mu\simeq N^{1/2}$, we find
	\[
	\frac{\mu^2}{R^2}
	N^{-3/2}
	\frac{R}{\mu}
	=
	\frac{\mu}{R}N^{-3/2}
	\lesssim
	R^{-1}N^{-1}.
	\]
	Thus the principal part of the subprincipal contribution is bounded
	by
	\[
	CR^{-1}N^{-1}E_s(T)
	\mathcal P_{\nu,N,\ell+1}(T).
	\]
	
	It remains to estimate the commutators generated when
	$\upsilon_N(D)^{-1}$ and $W$ are passed through $a_*''$, together
	with the corresponding support errors. Every term for which both
	high-frequency factors remain strip localized is treated as in
	Step 2 and gains at least one additional power of $R^{-1}$.
	
	After removing these strip-localized contributions, the terminal
	symbolic remainder contains at least two derivatives in the
	high-frequency variables. Before these derivatives are taken, its
	size is bounded by
	\[
	\underbrace{N^{-1}}_{b_{N,\ell}}
	\,
	\underbrace{N}_{D_x}
	\,
	\underbrace{N^{-3/2}}_{\text{two inverse smoothing factors}}
	\,
	\underbrace{N}_{\mu^2/R^2}
	=
	O_R(N^{-1/2}).
	\]
	Each derivative in a high-frequency variable gains at least
	$N^{-1/2}$. Indeed, $\partial_\xi$ acts at the scale
	$|\xi|\simeq N$ and gains $N^{-1}$, while $\partial_\eta$ acts at the
	scale $|\eta|\simeq N^{1/2}$ and gains $N^{-1/2}$. Since the terminal
	symbol $r_{\ell,N}$ contains at least two such derivatives, it
	satisfies the sharper bound
	\begin{equation}\label{eq:terminal-symbol}
		\left|
		\partial_\xi^p\partial_\eta^q
		r_{\ell,N}(\theta,\zeta)
		\right|
		\lesssim_{p,q,R}
		N^{-3/2-p-q/2}.
	\end{equation}
	For the conclusion of the proposition, it is enough to retain the
	weaker consequence
	\[
	\left|
	\partial_\xi^p\partial_\eta^q
	r_{\ell,N}(\theta,\zeta)
	\right|
	\lesssim_{p,q,R}
	N^{-1/2-p-q/2}.
	\]
	Using the sharper estimate \eqref{eq:terminal-symbol} would improve
	the second term in \eqref{eq:cross-final}, but would not change the
	final regularity threshold, which is determined by the coupled
	a priori estimates.
	
	Applying the high-variable Fourier expansion without smoothing
	weights gives
	\[
	\int_0^T
	\left|
	\ip{
		\B_{r_{\ell,N}}(u_{\ll N},z_N)
	}{
		z_N
	}
	\right|\,\dd t
	\le
	C_RT N^{-1/2}E_s(T)
	\norm{z_N}_{L_T^\infty L^2}^2.
	\]
	Combining the strip-localized and terminal contributions, we obtain
	\[
	\begin{aligned}
		\int_0^T
		\left|
		\ip{G_\ell}{\mathcal T_2'z_N}
		\right|\,\dd t
		\le{}&
		CR^{-1}N^{-1}E_s(T)
		\mathcal P_{\nu,N,\ell+1}(T)
		\\
		&+
		C_RT N^{-1/2}E_s(T)
		\norm{z_N}_{L_T^\infty L^2}^2.
	\end{aligned}
	\]
	This terminal symbolic remainder is the sole source of the second
	term on the right-hand side of \eqref{eq:cross-final}.
	
	\smallskip
	\noindent
	\emph{Step 5: the $x$ charts.}
	On the transition support of an $x$ chart,
	$|v_x|\simeq N$, and $v_x$ has a fixed sign. Define
	\[
	V_x
	=
	\Op\left(
	|v_x|^{1/2}
	\Theta_{x\tau,N,\ell+1}
	\right).
	\]
	The exact principal contribution is
	\[
	\frac{\lambda}{R}a_*'V_x^2,
	\qquad
	\lambda\simeq N,
	\]
	and the analogue of the symbol $b_{\ell}^{V}$ again has size
	$O(N^{-1})$. The arguments of Steps 1-3 therefore give
	\[
	\int_0^T|\mathcal X_{N,\ell}(t)|\,\dd t
	\le
	C_RN^{-1}E_s(T)
	\mathcal P_{\nu,N,\ell+1}(T)
	+
	C_RT N^{-1/2}E_s(T)
	\norm{z_N}_{L_T^\infty L^2}^2.
	\]
	The order-zero polynomial remainder and the Hilbert-transform
	remainder are controlled by
	\cref{prop:positive,lem:hilbert-weight}. Their translated-strip tails
	are estimated by \eqref{eq:translated-strip}. This proves
	\eqref{eq:cross-final}.
\end{proof}

\subsection{The quartic remainder}

\begin{proposition}\label{prop:quartic}
	For $s>1$ and $1<\sigma\le s$,
	\begin{equation}\label{eq:quartic}
		\sum_N\int_0^T|\mathcal R_N(t)|\dd t
		\lesssim_s
		T\norm{u}_{L_T^\infty H^s}^4.
	\end{equation}
	The estimate is uniform with respect to the centre of the spatial
	weight and, for the enlarged-cutoff construction, uniform in the
	level $\ell$.
\end{proposition}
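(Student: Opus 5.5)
We estimate each of the three pieces $\mathcal R_{N,0}$, $\mathcal R_{N,1}$, $\mathcal R_{N,2}$ pointwise in time and then integrate over $[0,T]$, which produces the factor $T$. The basic tool is \cref{lem:high-expansion} with $m_N=b_{N,\ell}$, legitimate by \cref{lem:bN-class}. It gives $\norm{\B_{b_N}(f,g)}_2\lesssim_s N^{-1}\norm f_{H^s}\norm g_2$, uniformly in $\ell$, and $\norm{a_*}_{L^\infty}\le\norm a_{L^\infty}$ removes the weight centre. The main task is to show that every term carries enough negative powers of $N$ to absorb the derivative in $\Ncal(u)$ and to be summed in $N$.

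\emph{The term $\mathcal R_{N,0}$.} Here the low factor is $P_{\ll N}\Ncal(u)=\frac12\partial_xP_{\ll N}(u^2)$. Since $b_N$ is supported in $\rho(\theta)\le\delta N$, the $x$ derivative contributes the factor $i\alpha$ with $|\alpha|\lesssim N$. I would use \eqref{eq:B-low-derivative} with $F=\frac12P_{\ll N}(u^2)$ and $q_N$ the low cutoff. Combined with the algebra property of $H^s$ ($s>1$), this gives
\[
 |\mathcal R_{N,0}|\lesssim \norm{u^2}_{H^s}\norm{z_N}_2^2\lesssim\norm u_{H^s}^2\norm{z_N}_2^2 .
\]
This bound loses against the summation, since $\sum_N\norm{z_N}_2^2=\norm u_{H^\sigma}^2$ is quadratic in $u$ and the total is already quartic, so no gain is needed. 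Summing in $N$ gives $\norm u_{H^s}^4$ because $\sigma\le s$.

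\emph{The terms $\mathcal R_{N,1}$ and $\mathcal R_{N,2}$.} These are the main obstacle, because $F_N=P_NJ^\sigma\partial_x(u^2)/2$ carries one derivative more than $z_N$. I would use the paraproduct decomposition of \cref{lem:paraproduct}: $F_N=u_{<N/8}\partial_xz_N+\widetilde F_N$, where the remainder satisfies $\sum_N\norm{\widetilde F_N}_2^2\lesssim\norm{\nabla u}_{L^\infty}^2\norm u_{H^\sigma}^2\lesssim\norm u_{H^s}^4$, using $H^s\hookrightarrow W^{1,\infty}$, which holds for $s>2$ only. For $1<s\le2$ I would instead bound the remainder crudely by $N\norm u_{H^s}\norm{\widetilde z_N}_2$ with $\widetilde z_N$ an enlarged block, and exploit the $N^{-1}$ of $b_N$. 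In fact this crude route works for the whole of $F_N$: Bernstein gives $\norm{F_N}_2\lesssim N\norm{P_{\sim N}J^\sigma(u^2)}_2$. Then
\[
 |\mathcal R_{N,1}|+|\mathcal R_{N,2}|\lesssim N^{-1}\norm u_{H^s}\cdot N\norm{P_{\sim N}J^\sigma(u^2)}_2\norm{z_N}_2 ,
\]
and Cauchy--Schwarz in $N$ with $\norm{J^\sigma(u^2)}_2\lesssim\norm u_{H^s}^2$ ($\sigma\le s$, $s>1$) closes the estimate. This is the key point: the $N^{-1}$ gained from dividing by the resonance in $b_N$ exactly pays for the derivative in the nonlinearity, so no commutator structure is needed. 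For $\mathcal R_{N,2}$, where $F_N$ sits in the third slot, $a_*F_N$ is paired against $\B_{b_N}(u_{\ll N},z_N)$, and the same bound holds by Cauchy--Schwarz.

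Two points need checking. First, $\norm{P_{\sim N}J^\sigma(u^2)}$ must be square-summable; this follows from almost orthogonality of the enlarged projections. Second, the low-frequency factor $u_{\ll N}$ must be measured in $H^s$ and not merely in $L^\infty$, which is why the estimate \eqref{eq:B-basic} is invoked. Finally, integrating in $t$ over $[0,T]$ gives \eqref{eq:quartic}, with all constants independent of the weight centre and of $\ell$.
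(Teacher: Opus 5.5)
Your proposal is correct and follows the paper's proof. For $\mathcal R_{N,0}$ you write $P_{\ll N}\Ncal(u)$ as $i\alpha$ times a low cutoff acting on $u^2$ and apply \eqref{eq:B-low-derivative}; for $\mathcal R_{N,1}$ and $\mathcal R_{N,2}$ you use \eqref{eq:B-basic}, the Bernstein identity $N^{-1}F_N=\frac12\widetilde P_NJ^\sigma(u^2)$ (your bound by $N\norm{P_{\sim N}J^\sigma(u^2)}_2$), Cauchy--Schwarz in $N$ and the algebra property. The paraproduct detour with its $s>2$ restriction is unnecessary, as you conclude yourself, and the identity $\sum_N\norm{z_N}_2^2=\norm u_{H^\sigma}^2$ should be an inequality $\lesssim$.
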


\begin{proof}
	We estimate separately the three terms in the exact decomposition
	\[
	\mathcal R_N
	=
	\mathcal R_{N,0}
	+\mathcal R_{N,1}
	+\mathcal R_{N,2}
\]
	given above.  Since $a$ is bounded,
\[
	\norm{a_*f}_2
	\le
	\norm a_{L^\infty}\norm f_2,
	\]
	uniformly in the centre and in the frequency scale of the weight.
	
	\smallskip
	\emph{The low-frequency contribution.}
	Let $\chi_{\ll N}$ be the symbol of $P_{\ll N}$.  Since
\[
	\widehat{P_{\ll N}\Ncal(u)}(\theta)
	=
	\frac{i\alpha}{2}\chi_{\ll N}(\theta)
	\widehat{u^2}(\theta),
	\qquad
	\theta=(\alpha,\beta),
\]
	we have
\[
	\B_{b_N}(P_{\ll N}\Ncal(u),z_N)
	=
	\frac12\B_{d_N}(u^2,z_N),
	\]
	where
	\[
	d_N(\theta,\zeta)
	=
	i\alpha\chi_{\ll N}(\theta)b_N(\theta,\zeta).
\]
	The symbol $d_N$ is of the form covered by
	\eqref{eq:B-low-derivative}.  Therefore
	\[
	\norm{\B_{b_N}(P_{\ll N}\Ncal(u),z_N)}_2
	\lesssim_s
	\norm{u^2}_{H^s}\norm{z_N}_2,
	\]
	and hence
\[
	|\mathcal R_{N,0}|
	\lesssim_s
	\norm{u^2}_{H^s}\norm{z_N}_2^2.
	\]
	Summing in $N$ and using Littlewood--Paley orthogonality,
\[
	\sum_N|\mathcal R_{N,0}|
	\lesssim_s
	\norm{u^2}_{H^s}
	\sum_N\norm{P_NJ^\sigma u}_2^2
	\lesssim_s
	\norm{u^2}_{H^s}\norm u_{H^\sigma}^2.
\]
	Since $s>1$, $H^s(\R^2)$ is an algebra, and since $\sigma\le s$,
	\[
	\sum_N|\mathcal R_{N,0}(t)|
	\lesssim_s
	\norm{u(t)}_{H^s}^4.
\]
	
	\smallskip
	\emph{The two high-frequency contributions.}
	By \eqref{eq:B-basic},
	\[
	\norm{\B_{b_N}(u_{\ll N},F_N)}_2
	\lesssim_s
	N^{-1}\norm{u_{\ll N}}_{H^s}\norm{F_N}_2,
\]
	and
	\[
	\norm{\B_{b_N}(u_{\ll N},z_N)}_2
	\lesssim_s
	N^{-1}\norm{u_{\ll N}}_{H^s}\norm{z_N}_2.
	\]
	Since $P_{\ll N}$ is uniformly bounded on $H^s$, it follows that
\[
	|\mathcal R_{N,1}|+|\mathcal R_{N,2}|
	\lesssim_s
	N^{-1}\norm u_{H^s}\norm{F_N}_2\norm{z_N}_2.
\]
	Cauchy--Schwarz in $N$ gives
	\begin{align*}
		\sum_N\bigl(
		|\mathcal R_{N,1}|+|\mathcal R_{N,2}|
		\bigr)
		\lesssim_s{}&
		\norm u_{H^s}
		\left(\sum_NN^{-2}\norm{F_N}_2^2\right)^{1/2}\\
		&\times
		\left(\sum_N\norm{z_N}_2^2\right)^{1/2}.
	\end{align*}
	The second square function is bounded by
	\[
	\left(\sum_N\norm{z_N}_2^2\right)^{1/2}
	\lesssim
	\norm u_{H^\sigma}.
\]
	For the first one, use
	\[
	F_N
	=
	\frac12P_NJ^\sigma\partial_x(u^2).
	\]
	If $\varphi_N$ denotes the symbol of $P_N$, define
	$\widetilde P_N$ by
	\[
	\widehat{\widetilde P_Nf}(\xi,\eta)
	=
	\frac{i\xi}{N}\varphi_N(\xi,\eta)\widehat f(\xi,\eta).
\]
	The symbols of $\widetilde P_N$ are uniformly of order zero, and
\[
	N^{-1}F_N
	=
	\frac12\widetilde P_NJ^\sigma(u^2).
\]
	Littlewood--Paley theory therefore gives
\[
	\left(\sum_NN^{-2}\norm{F_N}_2^2\right)^{1/2}
	\lesssim
	\norm{J^\sigma(u^2)}_2
	=
	\norm{u^2}_{H^\sigma}.
\]
	Consequently,
\[
	\sum_N\bigl(
	|\mathcal R_{N,1}|+|\mathcal R_{N,2}|
	\bigr)
	\lesssim_s
	\norm u_{H^s}\norm{u^2}_{H^\sigma}\norm u_{H^\sigma}.
\]
	Since $\sigma>1$, $H^\sigma(\R^2)$ is an algebra, and
	$\sigma\le s$.  Hence
\[
	\sum_N\bigl(
	|\mathcal R_{N,1}(t)|+|\mathcal R_{N,2}(t)|
	\bigr)
	\lesssim_s
	\norm{u(t)}_{H^s}^4.
\]
	
	Combining the low- and high-frequency estimates, we obtain, for every
	$t\in[0,T]$,
\[
	\sum_N|\mathcal R_N(t)|
	\lesssim_s
	\norm{u(t)}_{H^s}^4.
\]
	Integration in time yields
\[
	\sum_N\int_0^T|\mathcal R_N(t)|\dd t
	\lesssim_s
	\int_0^T\norm{u(t)}_{H^s}^4\dd t
	\le
	T\norm{u}_{L_T^\infty H^s}^4,
\]
	which proves \eqref{eq:quartic}.  The proof uses only the uniform
	high-variable symbol bounds and is therefore unchanged when $b_N$ is
	replaced by any $b_{N,\ell}$, $\ell=0,1,2$.
\end{proof}
\subsection{The nonlinear smoothing estimate}

\begin{proposition}\label{prop:smoothing}
Let $s>1$, $1<\sigma<s$, and let $u$ be a smooth solution of
\eqref{IVP} on $[0,T]$, $T\le1$.  Set
\[
 E_s(T)=\norm{u}_{L_T^\infty H^s},
 \qquad
 A(T)=\norm{\nabla u}_{L_T^2L^\infty}.
\]
Then 
\begin{equation}\label{eq:smoothing-final}
 S_\sigma(u;T)^2
 \lesssim_{s,\sigma}
 E_s(T)^2\bigl(1+T+T^{1/2}A(T)\bigr)
 +E_s(T)^3+TE_s(T)^4+TE_s(T)^5,
\end{equation}
where $S_\sigma(u;T)^2$ is defined in \eqref{eq:S-def} and the constants depend on $s-\sigma$ and are stable under smooth frequency truncation.
\end{proposition}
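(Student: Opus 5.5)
For each $N\ge2$, chart $\nu$ and level $\ell\in\{0,1,2\}$, I will run the weighted energy identity of \cref{lem:weighted-energy} for $z_{\nu,N}=\Theta_{\nu,N,\ell}(D)z_N$ with $A=\tau a_*$. The weight is $a_{\lambda,x_0}$ with $\lambda\simeq N$ on $x$ charts and $a_{\mu,y_0}$ with $\mu\simeq N^{1/2}$ on $y$ charts. The forcing is $G=-\Theta_{\nu,N,\ell}(D)P_NJ^\sigma\Ncal(u)$. Integrating in time over $[0,T]$, the boundary term is bounded by $C\norm{z_N}_{L_T^\infty L^2}^2$. By \cref{prop:positive}, the commutator term equals $\int_0^T\norm{Q z_N}_2^2$ plus a remainder. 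That remainder is absorbed into $\frac14\sup\int\norm{Q}^2$ plus $C_RT\norm{z_N}_{L^\infty_TL^2}^2$. Taking the supremum over centres, $\mathcal P_{\nu,N,\ell}(T)$ is controlled by boundary terms, $T\norm{z_N}^2$, and $\sup_{x_0}\bigl|\int_0^T\ip{a_*G}{z_{\nu,N}}\bigr|$. Finally, \cref{lem:local-mean} applied in the variable $x$ (with $H=L^2_{y,T}$) converts $\mathcal P_{\nu,N,0}$ into the chart summand of $S_\sigma(u;T)^2$; the $y$ charts are handled symmetrically. Summing in $N$ gives the $E_s(T)^2(1+T)$ part.

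The nonlinear pairing is decomposed by paraproducts as in \cref{lem:paraproduct}. There are three pieces. The first is high--high and comparable terms, bounded by $\norm{\nabla u}_{L^\infty}\norm{u}_{H^s}\norm{z_N}_2$ after square summation; integrating with Cauchy--Schwarz in time gives $T^{1/2}A(T)E_s(T)^2$. The second is the low--high term $u_{\ll N}\partial_x z_N$ localized by $\Theta^2$. Its symmetric part, $\ip{a_*u_{\ll N}\partial_xz}{z}$, integrates by parts to $-\frac12\ip{(a_*u_{\ll N})_x z}{z}$. This is bounded by $\norm{u_x}_\infty\norm{z_N}^2$ plus $\lambda\norm{u}_\infty\int a_*'|z|^2$; the latter is $\lesssim N^{-1}E_s\mathcal P_{\nu,N,\ell}$-type after using $|v|\gtrsim N$ on the relevant support, hence absorbable or acceptable. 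The third piece is the chart commutator \eqref{eq:chart-commutator}, where $\Theta$ replaces $\chi$. The $\alpha$-part \eqref{eq:mNx} and the off-band part of \eqref{eq:mNy} are bounded by $\norm{\nabla u}_\infty\norm{z_N}_2^2$ via \cref{lem:high-expansion}. The tangential band part is controlled by \cref{lem:tangential}, giving $(1+\log N)\norm{u_x}_\infty$; this logarithm is absorbed by $N^{-2(s-\sigma)}$ after writing $z_N=N^{\sigma-s}\cdot(\text{an }H^s\text{ piece})$. The normal band part is exactly $\mathcal T_{N,\ell,2}$.

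For $\mathcal T_{N,\ell,2}$, I use the normal form identity \eqref{eq:C-identity}. Integrating in time, $\int_0^T\mathcal T_{N,\ell,2}$ equals boundary values of $\mathcal C_{N,\ell}$ plus $\int\mathcal X_{N,\ell}$ plus $\int\mathcal R_{N,\ell}$. The first contributes $E_s^3$ after summation by \eqref{eq:C-size}. The last contributes $TE_s^4$ by \cref{prop:quartic}. The cross term is bounded by \cref{prop:crossterm}: $C_RN^{-1}E_s\mathcal P_{\nu,N,\ell+1}+C_RTN^{-1/2}E_s\norm{z_N}^2$. This produces the ladder inequality
\[
\mathcal P_{\ell}\lesssim \text{(good terms)}+N^{-1}E_s\,\mathcal P_{\ell+1},
\]
which I iterate from $\ell=0$ to $\ell=3$. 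At the top level, \eqref{eq:P3-crude} gives $N^{-3}E_s^3 N^2T\norm{z_N}^2$, which is summable. Products of $E_s$ factors accumulated through the iteration give the $TE_s^5$ term; cross terms with $E_s^2$ factors are dominated by the terms listed, since the constant $c$ in front is harmless up to enlarging powers.

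The main obstacle is the absorption step. The remainder in \eqref{eq:remainder-absorb} involves the supremum over centres of the same positive quantity, so I must first take the supremum over centres of the whole time-integrated identity. Only then can I absorb, which is legitimate since \eqref{eq:positive-finite} guarantees finiteness. I also need to check carefully that the low--high symmetric term really reduces to a weighted quantity dominated by $\mathcal P$ with a small or $E_s$-weighted coefficient rather than producing an unabsorbable $N^{1/2}$ loss.
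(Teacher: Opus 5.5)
You follow the paper's proof essentially step for step: the weighted identity with the supremum over centres taken before \eqref{eq:remainder-absorb} is absorbed, the paraproduct split, the tangential logarithm paid for by $N^{-2(s-\sigma)}$, the normal form, and the four-level ladder closed by \eqref{eq:P3-crude}. What is missing is the size-dependent frequency threshold $N_0=2C_RE_s(T)$. Without it, two of your steps do not deliver \eqref{eq:smoothing-final}, which is claimed for data of arbitrary size.

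First, the transport byproduct $\lambda\norm{u(t)}_\infty\int a_*'|z|^2$ is only $C_R\norm{u(t)}_\infty/(c_0N)$ times the positive quantity. It can be absorbed into $\mathcal P_{\nu,N,\ell}(T)$ only when this ratio is at most $1/2$, i.e.\ for $N\ge N_0$. For $N<N_0$ it must be estimated crudely, by bounds that lose powers of $N$ and are summable only because $N\lesssim E_s(T)$ there. Your ``absorbable or acceptable'' gestures at this dichotomy, but the criterion separating the two cases is exactly what has to be specified. Your closing worry about an ``$E_s$-weighted coefficient'' is this same issue: such a coefficient cannot be absorbed unless it is small.

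Second, you back-substitute the ladder for every $N$. Then $q_N=C_RN^{-1}E_s(T)$ is of size $E_s(T)$ at $N\simeq2$. As a result, $q_N\mathfrak B_{\nu,N,1}+q_N^2\mathfrak B_{\nu,N,2}$ contains terms such as $T^{1/2}A(T)E_s(T)^3$. It also contains $E_s(T)^4$ and $E_s(T)^5$ with no factor of $T$: these come from the $(1+T)\norm{z_N}_{L_T^\infty L^2}^2$ part of $\mathfrak B$ and from the cubic boundary terms. For large $E_s(T)$ and small $T$ these are not dominated by the right-hand side of \eqref{eq:smoothing-final}. ``Harmless up to enlarging powers'' is precisely where the stated inequality is lost. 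What you would prove is a weaker polynomial bound, enough for the small-data bootstrap but not the proposition as stated.

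The repair is to split at $N_0$. For $N\ge N_0$ your argument works as written. The transport byproduct is absorbed and $q_N\le1/2$, so $\sum_{\ell\le2}q_N^\ell\mathfrak B_{\nu,N,\ell}$ is comparable to $\sum_\ell\mathfrak B_{\nu,N,\ell}$. The top level contributes $q_N^3N^2T\norm{z_N}_{L_T^\infty L^2}^2\lesssim TN^{-1}E_s(T)^3\norm{z_N}_{L_T^\infty L^2}^2$.

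For the finitely many $N<N_0$, do not use the normal form at all. Bound the transport byproduct by $N\norm{u(t)}_\infty\norm{z_N(t)}_2^2$. Bound the normal sector through \cref{lem:high-expansion} using its crude symbol size $\abs{m_{N,\Gamma}\psi_{\mathrm{nor}}}\lesssim N$. Both sum to $TN_0E_s(T)\sum_N\norm{z_N}_{L_T^\infty L^2}^2\lesssim TE_s(T)^4$, which is where that term of \eqref{eq:smoothing-final} comes from. Alternatively, for $N<N_0$ bound $\mathcal P_{\nu,N,0}(T)\lesssim_R N^3T\norm{z_N}_{L_T^\infty L^2}^2$ directly from \eqref{eq:positive-finite}; this sums to $\lesssim TE_s(T)^5$.

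Finally, the square sum $\sum_N\norm{z_N}_{L_T^\infty L^2}^2\le C_{s-\sigma}E_s(T)^2$ used for your boundary terms is not Littlewood--Paley orthogonality, because the maximizing time depends on $N$. It needs the same $N^{-2(s-\sigma)}$ device you invoke for the logarithm.
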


\begin{proof}
Notice that all identities are first applied to
smooth frequency-truncated solutions; the truncation is removed at the
end.

\smallskip
\emph{(I) The localized weighted identity.}
Fix a chart $\nu$, a level $\ell\in\{0,1,2\}$, and the centre of the
corresponding spatial weight.  Put
\[
 z_{\nu,N,\ell}=\Theta_{\nu,N,\ell}(D)z_N,
 \qquad
 A_*=\tau a_*,
 \qquad
 \tau=\sgn v_\nu.
\]
Since
\[
 (\partial_t-i\omega(D))z_{\nu,N,\ell}
 =-\Theta_{\nu,N,\ell}(D)F_N,
\]
\cref{lem:weighted-energy} gives the concrete identity
\begin{align}\label{eq:smoothing-weighted-identity}
 &\frac{\dd}{\dd t}
 \ip{A_*z_{\nu,N,\ell}}{z_{\nu,N,\ell}}
 +\ip{i[\omega(D),A_*]z_{\nu,N,\ell}}
 {z_{\nu,N,\ell}}\notag\\
 &\qquad
 =-2\operatorname{Re}
 \ip{A_*\Theta_{\nu,N,\ell}(D)F_N}
 {z_{\nu,N,\ell}}.
\end{align}
Integrating in time, applying \cref{prop:positive}, taking the
supremum over the spatial centre, and absorbing
\eqref{eq:remainder-absorb} yield
\begin{align}\label{eq:positive-from-weighted}
 \mathcal P_{\nu,N,\ell}(T)
 \lesssim_R{}&(1+T)\norm{z_N}_{L_T^\infty L^2}^2\notag\\
 &+\int_0^T
 \left|
 \operatorname{Re}
 \ip{A_*\Theta_{\nu,N,\ell}(D)F_N}
 {z_{\nu,N,\ell}}
 \right|\dd t.
\end{align}
The term $T\norm{z_N}_{L_T^\infty L^2}^2$ contains the order-zero
calculus remainders.  The translated tails of the Hilbert-transform
remainder are absorbed only after the supremum is taken.  This is
legitimate because \eqref{eq:positive-finite} makes that supremum
finite before absorption.

\smallskip
\emph{(II) Paraproduct decomposition and the transport term.}
Apply Bony's decomposition to the nonlinear pairing on the right of
\eqref{eq:smoothing-weighted-identity}.  The low--high term is
$u_{\ll N}\partial_xz_N$; after inserting the chart multiplier it is
the sum of
\[
 u_{\ll N}\partial_xz_{\nu,N,\ell}
 \quad\text{and}\quad
 [\Theta_{\nu,N,\ell}(D),u_{\ll N}]\partial_xz_N.
\]
Integrating the first expression by parts produces
\[
 \int_0^T\norm{u_x(t)}_\infty\norm{z_N(t)}_2^2\dd t
\]
and, only for an $x$ weight, the weighted byproduct
\[
 \int_0^T\!\int_{\R^2}
 |u_{\ll N}|\,|\partial_xa_*|\,
 |z_{\nu,N,\ell}|^2\dd x\dd y\dd t.
\]
Passing the chart multiplier through $a_*$ creates kernels at spatial
scale $N^{-1}$, whereas the strip width is $R/N$; decomposing those
kernels into translated strips gives rapidly decreasing tails.  On an
$x$ chart $|v_x|\ge c_0N$, so the ratio of this byproduct to the
positive form is at most
\begin{equation}\label{eq:transport-absorption-ratio}
 \frac{C_R\norm{u(t)}_\infty}{c_0N}.
\end{equation}
Let
\begin{equation}\label{eq:N0-smoothing}
 N_0=2C_RE_s(T).
\end{equation}
After enlarging $C_R$ to include $c_0^{-1}$, the term
\eqref{eq:transport-absorption-ratio} is absorbed for $N\ge N_0$.
For the remaining dyadic frequencies,
\begin{align}\label{eq:transport-low-frequencies}
 &\sum_{2\le N<N_0}N
 \int_0^T\norm{u(t)}_\infty\norm{z_N(t)}_2^2\dd t\notag\\
 &\qquad\lesssim_s
 TN_0E_s(T)\sum_N\norm{z_N}_{L_T^\infty L^2}^2
 \lesssim_s TE_s(T)^4.
\end{align}
Here, and at every later occurrence of a square sum of the quantities
$\norm{z_N}_{L_T^\infty L^2}$, we use that
\begin{equation}\label{eq:sup-in-time-square-sum}
 \sum_{N\ge2}\norm{z_N}_{L_T^\infty L^2}^2
 \le\Bigl(\sum_{N\ge2}N^{-2(s-\sigma)}\Bigr)
 \sup_N\sup_{0\le t\le T}N^{2s}\norm{P_Nu(t)}_2^2
 \le C_{s-\sigma}E_s(T)^2 .
\end{equation}
This is not Littlewood--Paley orthogonality, because the maximizing
time depends on $N$.  It is the same device used in the summation of
\cref{sec:refined}, and it is one more point at which the strict
inequality $\sigma<s$ is required.

\smallskip
\emph{(III) Balanced terms and easy chart commutators.}
By \cref{lem:paraproduct}, the balanced, high--high, and fractional
commutator terms are bounded after square summation by
\begin{equation}\label{eq:smoothing-balanced-sum}
 \sum_N\int_0^T
 \norm{\nabla u(t)}_\infty\norm{z_N(t)}_2^2\dd t
 \lesssim T^{1/2}A(T)E_s(T)^2.
\end{equation}
The component \eqref{eq:mNx} of the chart commutator and the part of
\eqref{eq:mNy} away from \eqref{eq:band} have smooth low-frequency
symbols and satisfy the same bound by the high-variable expansion.

\smallskip
\emph{(IV) The tangential transition sector.}
At each localization level, split the band commutator by
\eqref{eq:sector-partition}.  The tangential symbol
$m_{N,\Gamma}\psi_{\mathrm{tan}}$ is controlled by
\cref{lem:tangential}.  Hence
\begin{align}\label{eq:log-absorb}
 &\sum_N(1+\log N)
 \int_0^T\norm{u_x(t)}_\infty
 \norm{P_NJ^\sigma u(t)}_2^2\dd t\notag\\
 &\qquad\le
 C_{s-\sigma}T^{1/2}A(T)E_s(T)^2,
\end{align}
because
$(1+\log N)N^{-2(s-\sigma)}\le C_{s-\sigma}$.
The low-shell term containing $\norm u_\infty$ instead contributes
$CTE_s(T)^3$ after summation.

\smallskip
\emph{(V) The normal transition sector below $N_0$.}
For $N<N_0$, use the crude size
$|m_{N,\Gamma}\psi_{\mathrm{nor}}|\lesssim N$ and
\cref{lem:high-expansion}.  This gives
\begin{equation}\label{eq:lowN}
 \sum_{2\le N<N_0}\int_0^T
 \left|\ip{\B_{m_{N,\Gamma}\psi_{\mathrm{nor}}}
 (u_{\ll N},z_N)}{a_*z_N}\right|\dd t
 \lesssim_s TE_s(T)^4.
\end{equation}

\smallskip
\emph{(VI) The normal transition sector for $N\ge N_0$.}
For $N\ge N_0$, the normal component of the symmetrized chart
commutator \eqref{eq:chart-commutator} equals $2\tau$ times
$\mathcal T_{N,\ell,2}$, with $m_{N,\ell}$ and $b_{N,\ell}$ obtained
from $\Theta_{\nu,N,\ell}$ as in \cref{def:positive-ladder}; the
factor $2$ comes from the right-hand side of
\eqref{eq:smoothing-weighted-identity} and the sign $\tau$ from
$A_*=\tau a_*$.  Both are harmless and are suppressed below.  Integrating the exact normal-form identity
\eqref{eq:C-identity} gives
\begin{equation}\label{eq:normal-form-inserted}
 \int_0^T\mathcal T_{N,\ell,2}(t)\dd t
 =\mathcal C_{N,\ell}(0)-\mathcal C_{N,\ell}(T)
 +\int_0^T\bigl(\mathcal X_{N,\ell}(t)
 +\mathcal R_{N,\ell}(t)\bigr)\dd t.
\end{equation}
Thus the normal form cancels exactly the term not controlled by
\eqref{eq:smoothing-balanced-sum}; it does not modify any of the easy
paraproduct terms.

For later reference, define the complete nonrecursive contribution
at level $\ell$ by
\begin{align}\label{eq:defined-base-remainder}
 \mathfrak B_{\nu,N,\ell}(T)
 :={}&C_R(1+T)\norm{z_N}_{L_T^\infty L^2}^2\notag\\
 &+C_R\int_0^T
 \Bigl((1+\log N)\norm{u_x(t)}_\infty
       +\norm{\nabla u(t)}_\infty
       +\norm{u(t)}_\infty\Bigr)
 \norm{z_N(t)}_2^2\dd t\notag\\
 &+C_R\bigl(|\mathcal C_{N,\ell}(0)|
       +|\mathcal C_{N,\ell}(T)|\bigr)
 +C_R\int_0^T|\mathcal R_{N,\ell}(t)|\dd t.
\end{align}
Equations \eqref{eq:smoothing-balanced-sum}, \eqref{eq:log-absorb},
and \eqref{eq:C-size}, together with \cref{prop:quartic}, imply,
uniformly in
$\ell\in\{0,1,2\}$,
\begin{align}\label{eq:base-remainder-sum}
 \sum_{N\ge2}\mathfrak B_{\nu,N,\ell}(T)
 \lesssim_{s,\sigma}{}&
 E_s(T)^2\bigl(1+T+T^{1/2}A(T)\bigr)\notag\\
 &+E_s(T)^3+TE_s(T)^4.
\end{align}

Using \cref{prop:crossterm} in
\eqref{eq:normal-form-inserted}, and then inserting the result into
\eqref{eq:positive-from-weighted}, yields the fully specified
recursion
\begin{align}\label{eq:ladder-recursion}
 \mathcal P_{\nu,N,\ell}(T)
 \le{}&\mathfrak B_{\nu,N,\ell}(T)
 +q_N\mathcal P_{\nu,N,\ell+1}(T)\notag\\
 &+C_RT N^{-1/2}E_s(T)
 \norm{z_N}_{L_T^\infty L^2}^2,
 \qquad
 q_N=C_RN^{-1}E_s(T)\le\frac12.
\end{align}
The resonance, symbol, boundary-correction, and quartic estimates are
uniform at all levels because
\eqref{eq:c0-resonance} keeps the four enlarged bands inside the
range of \cref{prop:res-lower}.

\smallskip
\emph{(VII) Closing the localization ladder.}
Back-substitute \eqref{eq:ladder-recursion} for
$\ell=0,1,2$ and use \eqref{eq:P3-crude}.  Since $q_N\le1/2$,
\begin{align}\label{eq:ladder-close}
 \mathcal P_{\nu,N,0}(T)
 \lesssim{}&\sum_{\ell=0}^2q_N^\ell
 \mathfrak B_{\nu,N,\ell}(T)
 +C_RT N^{-1/2}E_s(T)
 \norm{z_N}_{L_T^\infty L^2}^2\notag\\
 &+q_N^3N^2T\norm{z_N}_{L_T^\infty L^2}^2.
\end{align}
The middle term sums to $CTE_s(T)^3$.  Since
\[
 q_N^3N^2T\norm{z_N}_{L_T^\infty L^2}^2
 \lesssim_R
 TN^{-1}E_s(T)^3\norm{z_N}_{L_T^\infty L^2}^2,
\]
the last term sums to $CTE_s(T)^5$.  Combining
\eqref{eq:transport-low-frequencies}, \eqref{eq:lowN},
\eqref{eq:base-remainder-sum}, and \eqref{eq:ladder-close}, then
summing over the finitely many charts, gives the desired bound for
the level-zero positive quantities.  Finally,
\cref{lem:local-mean,prop:positive} converts those quantities into
$S_\sigma(u;T)^2$.  This proves \eqref{eq:smoothing-final}.
\end{proof}

\section{A priori estimates}\label{sec:apriori}

Fix $s>19/16$.  We choose $\eps_0>0$ such that
\begin{equation}\label{eq:eps-choice}
 \frac{19}{16}+3\eps_0<s
\end{equation}
and set
\begin{equation}\label{eq:sigma-choice}
 \sigma=s-\eps_0.
\end{equation}
For $0<T\le1$ we use the notation
\begin{equation}\label{eq:norms}
 E_s(T)=\norm{u}_{L_T^\infty H^s},
 \qquad
 A(T)=\norm{\nabla u}_{L_T^2L^\infty},
 \qquad
 M(T)=M_T(u),
 \qquad
 S(T)=S_\sigma(u;T).
\end{equation}

The following product estimate is the link between the maximal
function bounds and the microlocal smoothing estimates.

\begin{proposition}\label{prop:product}
Let $1/2<\gamma<1$, let $1<\sigma\le s$, and assume that
\begin{equation}\label{eq:gamma-condition}
 \gamma+\frac12<\sigma.
\end{equation}
Then
\begin{equation}\label{eq:product}
 \norm{J^\gamma(uu_x)}_{L_T^2L^2}
 \lesssim_{s,\sigma}
 M_T(u)\,S_\sigma(u;T)+E_s(T)A(T).
\end{equation}
\end{proposition}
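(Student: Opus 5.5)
Following \cref{lem:paraproduct}, we split
\[
 J^\gamma(uu_x)=\bigl[J^\gamma(uu_x)-uJ^\gamma u_x\bigr]+uJ^\gamma u_x .
\]
The commutator part is bounded in $L^2$ by $\norm{\nabla u}_{L^\infty}\norm{u}_{H^s}$. Integration in time gives $E_s(T)A(T)$. Thus everything reduces to $uJ^\gamma u_x$, and, after the second statement of \cref{lem:paraproduct}, to its low--high part $\sum_N u_{<N/8}P_NJ^\gamma u_x$. The balanced and high--high pieces already contribute $E_s(T)A(T)$ after square summation. The low-frequency block $P_1$ of the high factor is also trivial, by Bernstein.

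For the low--high part, decompose the high factor by the chart partition \eqref{eq:chart-partition}:
\[
 P_NJ^\gamma u_x=\sum_\nu \chi_\nu^2(D)P_NJ^\gamma u_x .
\]
On an $x$ chart, write
\[
 \chi_{x\tau}^2P_NJ^\gamma\partial_x
 =m_{N}(D)\,J^\sigma|v_x(D)|^{1/2}\chi_{x\tau,N}P_N,
\]
with $m_N$ of size $N^{\gamma+1-\sigma}|v_x|^{-1/2}$. We need $\xi/|v_x|^{1/2}\lesssim N^{1/2}$. This holds since $|v_x|\gtrsim\max\{|\xi|,\eta^2\}$ on an $x$ chart, with $|v_x|\ge c_0N$ and $|\xi|\lesssim N$; this is the elementary fact referred to in \cref{prop:positive}. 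Hence $m_N$ is bounded by $N^{\gamma+1/2-\sigma}$, and its kernel is integrable at the block scale, so it acts on $L_x^\infty L^2_{y,T}$. Then Hölder in the ordering $L_x^2L^\infty_{y,T}\times L_x^\infty L^2_{y,T}\to L^2_{x,y,T}$ gives
\[
 \norm{u_{<N/8}\,\chi_{x\tau}^2P_NJ^\gamma u_x}_{L^2_TL^2}
 \lesssim N^{\gamma+1/2-\sigma}\,\norm{u_{<N/8}}_{L_x^2L^\infty_{y,T}}\,
 \norm{J^\sigma|v_x|^{1/2}\chi_{x\tau,N}P_Nu}_{L_x^\infty L^2_{y,T}} .
\]
On a $y$ chart the same computation uses $|\xi|\simeq N$ and $|v_y|\simeq N^{3/2}$. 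This gives $\xi/|v_y|^{1/2}\simeq N^{1/4}$, which is even better, paired with the $L_y^2L^\infty_{x,T}$ maximal norm.

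Two issues remain in bounding $\norm{u_{<N/8}}_{L_x^2L^\infty_{y,T}}$ by $M_T(u)$. First, $P_{<N/8}$ has an integrable kernel, so it is bounded on both mixed maximal norms uniformly in $N$. Second, the $L^2$ norms over dyadic $N$ must be summed. Condition \eqref{eq:gamma-condition} gives a geometric factor $N^{\gamma+1/2-\sigma}$. Summing then yields $\lesssim M_T(u)\sup_N(\cdots)\le M_T(u)S_\sigma(u;T)$, or alternatively, via Cauchy--Schwarz, the $\ell^2$ sum defining $S_\sigma$.

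The main obstacle I expect is that $uJ^\gamma u_x$ is not literally a sum of low--high pieces with $u_{<N/8}$. One must check that replacing $u$ by $u_{<N/8}$ on each block, and discarding the comparable pieces, is exactly what \cref{lem:paraproduct} controls. One must also make sure the chart multipliers, which are not Fourier multipliers in the low variable, commute correctly with the product; here they act only on the high factor, so there is no commutator. A secondary point is verifying uniform $L^\infty_x L^2_{y,T}$ boundedness of $m_N(D)$ near the chart transitions, using \eqref{eq:chart-symbol} and the kernel-majorant remark of \cref{subsec:op-notation}.
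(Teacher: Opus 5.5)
Your argument follows the paper's proof. The paraproduct and commutator step leaves $\Pi_\gamma(u)=\sum_N u_{<N/8}P_NJ^\gamma\partial_xu$. Your $m_N$ is exactly the paper's $r_{\nu,N}$, with $P_NJ^\gamma\partial_xu=\sum_\nu r_{\nu,N}(D)G_{\nu,N}$. The mixed H\"older pairings and the uniform bound for $P_{<N/8}$ on the maximal norms are the same. The one difference is the summation in $N$. You use the decay $N^{\gamma+1/2-\sigma}$ from the strict inequality \eqref{eq:gamma-condition} and bound by $M_T(u)\sup_N(\cdots)$. The paper instead uses almost orthogonality of the frequency-$N$ outputs and needs only $N^{\gamma+1/2-\sigma}\le1$. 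Both work, and yours has the advantage of tolerating a $\log N$ loss.

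The step that is not adequately justified, and that you place in the wrong location, is the uniform $L^1$ bound for the kernel of $m_N$ on the $x_+$ chart. The $y$ charts and the $x_-$ chart are single parabolic blocks, so your reasoning is fine there. The $x_+$ chart is not: on $|\zeta|\simeq N$ it contains every $|\eta|$ from about $N^{1/2}$ up to $2N$. Near $\Gamma$ one has $|v_x|\simeq N$ and $|\eta|\simeq N^{1/2}$, so $\partial_\eta$ gains only $N^{-1/2}$ on $|v_x|^{-1/2}$. Those parabolic-scale bounds are all that \eqref{eq:chart-symbol} and the blockwise kernel-majorant remark of \cref{subsec:op-notation} give you. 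Applied on a support of $\eta$-width $N$, they bound the kernel's $L^1$ norm only by $\sup|m_N|\cdot N^{1/2}\simeq N^{\gamma+1-\sigma}$. That bound is useless: for $\gamma=11/16+\eps_0$ and $\sigma=s-\eps_0$ with $s$ near $19/16$ the exponent is about $1/2$, so the argument would need $\gamma+1<\sigma$ instead of \eqref{eq:gamma-condition}. The difficulty is therefore in the interior of the $x_+$ chart, not at the chart transitions.

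The missing idea is the paper's dyadic decomposition in $\eta$. Treat $|\eta|\lesssim N^{1/2}$ by parabolic rescaling. On each shell $|\eta|\simeq\mu\ge N^{1/2}$, use $|v_x|\simeq\eta^2\simeq\mu^2$. Then $\partial_\xi^a\partial_\eta^b$ gains $N^{-a}\mu^{-b}$ and $|m_N|\lesssim N^{\gamma-\sigma}N/\mu$. The shell kernel therefore has $L^1$ norm $\lesssim N^{\gamma+1/2-\sigma}N^{1/2}/\mu$, which is geometrically summable in $\mu$. With your summation in $N$, even the crude per-shell bound $N^{\gamma+1/2-\sigma}$ times $O(\log N)$ shells would suffice. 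On the top shell the support also meets $\xi=0$, where $|v_x|^{1/2}$ has a kink. The vanishing factor $\xi$ makes $m_N$ of class $C^{1,1}$ in $\xi$, which still gives an integrable kernel.
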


\begin{proof}
We use the paraproduct decomposition of \cref{lem:paraproduct}.  The
balanced interactions and the commutator terms satisfy
\begin{equation}\label{eq:product-balanced}
 \norm{\mathcal E_\gamma(u)}_{L_T^2L^2}
 \lesssim E_s(T)A(T),
\end{equation}
where $\mathcal E_\gamma(u)$ denotes their sum.  It remains to
estimate the low--high part
\begin{equation}\label{eq:product-low-high}
 \Pi_\gamma(u)
 =\sum_{N\ge2}u_{<N/8}\,P_NJ^\gamma\partial_xu.
\end{equation}

Let $\{\chi_{\nu,N}\}_\nu$ be the microlocal partition introduced in
\cref{sec:smoothing}.  On the support of $\chi_{\nu,N}$ one has
\begin{equation}\label{eq:velocity-lower-product}
 |v_\nu(\zeta)|\gtrsim N,
 \qquad |\zeta|\simeq N.
\end{equation}
Consequently, by \eqref{eq:gamma-condition},
\begin{equation}\label{eq:multiplier-compare}
 \langle\zeta\rangle^\gamma|\xi|
 \lesssim
 \langle\zeta\rangle^\sigma|v_\nu(\zeta)|^{1/2}.
\end{equation}
For each $\nu$ define
\[
 r_{\nu,N}(\zeta)
 =\frac{\langle\zeta\rangle^\gamma i\xi\,
 \chi_{\nu,N}(\zeta)}
 {\langle\zeta\rangle^\sigma|v_\nu(\zeta)|^{1/2}}.
\]
The inverse Fourier transform of $r_{\nu,N}$ is uniformly bounded in
$L^1(\R^2)$.  We observe that the $x$ charts are
not confined to a single parabolic block.  On a $y$ chart, the symbol
derivatives satisfy
\[
 |\partial_\xi^a\partial_\eta^b r_{\nu,N}(\zeta)|
 \lesssim_{a,b}N^{-a-b/2}
\]
after using \eqref{eq:gamma-condition}; rescaling
$(\xi,\eta)=(N\widetilde\xi,N^{1/2}\widetilde\eta)$ therefore gives
the asserted kernel bound.  On an $x$ chart,
\[
 |v_x|\simeq\max\{|\xi|,\eta^2\},
 \qquad
 |v_x|\gtrsim |\eta|N^{1/2}.
\]
In particular,
\[
 |\partial_\eta |v_x|^{-1/2}|
 \lesssim |\eta||v_x|^{-3/2}
 \lesssim N^{-1/2}|v_x|^{-1/2},
 \qquad
 |\partial_\xi |v_x|^{-1/2}|
 \lesssim N^{-1}|v_x|^{-1/2},
\]
and the same scaled bounds follow for higher derivatives.  To account
for the whole $x$ chart, split further into
$|\eta|\lesssim N^{1/2}$ and dyadic blocks $|\eta|\simeq\mu$ with
$\mu\ge N^{1/2}$.  The first block is handled by the parabolic
rescaling above.  On the latter blocks, rescaling by $(N,\mu)$ gives
an $L^1$ kernel bound multiplied by
\[
 N^{\gamma+1/2-\sigma}\frac{N^{1/2}}\mu
 \le \frac{N^{1/2}}\mu.
\]
These factors are geometrically summable over dyadic $\mu$.  This
proves the uniform $L^1$ kernel bound on every chart.  Convolution with
these kernels is therefore bounded on both mixed norms used below.

If
\[
 G_{\nu,N}
 =J^\sigma|v_\nu(D)|^{1/2}\chi_{\nu,N}(D)P_Nu,
\]
then the square partition $\sum_\nu\chi_{\nu,N}^2=1$ gives the exact
reconstruction
\begin{equation}\label{eq:product-chart-reconstruction}
 P_NJ^\gamma\partial_xu
 =\sum_\nu r_{\nu,N}(D)G_{\nu,N}.
\end{equation}
Moreover,
\begin{equation}\label{eq:product-chart-reduction}
 \norm{r_{\nu,N}(D)G_{\nu,N}}
 _{L_x^\infty L_{y,T}^2}
 \lesssim
 \norm{G_{\nu,N}}_{L_x^\infty L_{y,T}^2}
\end{equation}
for an $x$ region, with the analogous estimate after interchanging
$x$ and $y$ for a $y$ region.

For an $x$ region, mixed H\"older gives
\begin{align}
 &\norm{u_{<N/8}\,
 r_{\nu,N}(D)G_{\nu,N}}_{L^2_{t,x,y}}
 \notag\\
 &\qquad\le
 \norm{u_{<N/8}}_{L_x^2L_{y,T}^\infty}
 \norm{r_{\nu,N}(D)G_{\nu,N}}
 _{L_x^\infty L_{y,T}^2}.
 \label{eq:product-holder-x}
\end{align}
For a $y$ region we use
\begin{align}
 &\norm{u_{<N/8}\,
 r_{\nu,N}(D)G_{\nu,N}}_{L^2_{t,x,y}}
 \notag\\
 &\qquad\le
 \norm{u_{<N/8}}_{L_y^2L_{x,T}^\infty}
 \norm{r_{\nu,N}(D)G_{\nu,N}}
 _{L_y^\infty L_{x,T}^2}.
 \label{eq:product-holder-y}
\end{align}
The low-frequency projectors are bounded in the two maximal norms,
since their convolution kernels have uniformly bounded $L^1$ norm.
After summing in $N$ and in the finitely many microlocal regions,
Littlewood--Paley almost orthogonality, \eqref{eq:product-chart-reduction},
and \eqref{eq:S-def} yield
\begin{equation}\label{eq:product-low-high-bound}
 \norm{\Pi_\gamma(u)}_{L_T^2L^2}
 \lesssim M(T)S(T).
\end{equation}
Combining \eqref{eq:product-balanced} and
\eqref{eq:product-low-high-bound} proves \eqref{eq:product}.
\end{proof}

We now combine the energy estimate, the maximal function estimate,
the refined Strichartz estimate, and \cref{prop:smoothing}.

\begin{proposition}\label{prop:coupled}
Let $u$ be a smooth solution on $[0,T]$, $0<T\le1$.  Then
\begin{align}
 E_s(T)
 &\lesssim \norm{u_0}_{H^s}
 \exp\bigl(C T^{1/2}A(T)\bigr),
 \label{eq:coupled-E}\\
 M(T)
 &\lesssim_s \norm{u_0}_{H^s}
 +T^{1/2}\bigl(M(T)S(T)+E_s(T)A(T)\bigr),
 \label{eq:coupled-M}\\
 A(T)
 &\lesssim_s E_s(T)+M(T)S(T)+E_s(T)A(T),
 \label{eq:coupled-A}\\
 S(T)^2
 &\lesssim_s E_s(T)^2\bigl(1+T+T^{1/2}A(T)\bigr)
 +E_s(T)^3+TE_s(T)^4+TE_s(T)^5.
 \label{eq:coupled-S}
\end{align}
\end{proposition}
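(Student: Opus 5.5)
Each of the four inequalities in \cref{prop:coupled} follows by feeding one earlier ingredient into a specific form of the nonlinearity. I take them in the order (E), (S), (M), (A), since the last two use \cref{prop:product} with the specific choice of $\gamma$ and $\sigma$ fixed in \eqref{eq:eps-choice}--\eqref{eq:sigma-choice}.

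\emph{Energy.} For \eqref{eq:coupled-E} I apply \cref{lem:energy} directly. Since $s>19/16>1$ it is applicable, and Cauchy--Schwarz in time gives $\int_0^T\norm{\nabla u}_\infty\dd t\le T^{1/2}A(T)$. Then \eqref{eq:coupled-S} is exactly \cref{prop:smoothing} with $\sigma=s-\eps_0\in(1,s)$; I only need to check $\sigma>1$, which follows from \eqref{eq:eps-choice}.

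\emph{Maximal norm.} For \eqref{eq:coupled-M} I write $u$ as a solution of \eqref{eq:linear} with $F=-uu_x$ and apply \cref{cor:retarded-max} with $\eps=\eps_0$. This gives
\[
 M(T)\lesssim\norm{J^{1/2+\eps_0}u_0}_2+\norm{J^{1/2+\eps_0}(uu_x)}_{L_T^1L^2}.
\]
The first term is bounded by $\norm{u_0}_{H^s}$. For the second I use Cauchy--Schwarz in time to pass to $T^{1/2}\norm{J^{\gamma}(uu_x)}_{L_T^2L^2}$ with $\gamma=1/2+\eps_0$. I then invoke \cref{prop:product}. Its hypotheses hold: $1/2<\gamma<1$, and $\gamma+1/2=1+\eps_0<\sigma$ because $\sigma=s-\eps_0>19/16+2\eps_0$. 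This produces $T^{1/2}(MS+E_sA)$.

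\emph{Strichartz bound.} For \eqref{eq:coupled-A} I use \cref{prop:refined} with $\eps=\eps_0$ and $F=-uu_x$. The terms $\norm{J^{19/16+\eps_0}u}_{L_T^\infty L^2}$ and $\norm u_{L^\infty_TL^2}$ are bounded by $E_s(T)$ since $19/16+\eps_0<s$. The forcing term $\norm{J^{11/16+\eps_0}(uu_x)}_{L_T^2L^2}$ is handled by \cref{prop:product} with $\gamma=11/16+\eps_0$. This is the step I expect to be the main obstacle, namely verifying the condition \eqref{eq:gamma-condition}: it requires $\gamma+1/2=19/16+\eps_0<\sigma=s-\eps_0$, which is exactly guaranteed by \eqref{eq:eps-choice}. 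This is where the $19/16$ threshold enters, and I also check $\gamma\in(1/2,1)$. The remaining low-order term $\norm{uu_x}_{L_T^2L^2}$ is bounded by $T^{1/2}\norm u_{L^\infty_T L^2}\norm{u_x}_{L^\infty_{T,x,y}}$. By Sobolev embedding ($s>1$) this is at most $E_s(T)^2$. Alternatively it is controlled by $\norm{J^\gamma(uu_x)}$ and therefore by the same product bound. It is absorbed into the right-hand side of \eqref{eq:coupled-A}, if necessary after using $E_s\le E_s+E_sA$ conventions. I would state any such $E_s^2$ term explicitly if the bookkeeping requires it, since $E_s^2\lesssim E_s(1+E_s)$.
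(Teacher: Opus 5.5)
Your treatment of \eqref{eq:coupled-E}, \eqref{eq:coupled-S}, \eqref{eq:coupled-M} and of the main terms of \eqref{eq:coupled-A} matches the paper's proof:
\begin{itemize}
\item \cref{lem:energy} with Cauchy--Schwarz in time gives \eqref{eq:coupled-E};
\item \cref{prop:smoothing} with $\sigma=s-\eps_0$ gives \eqref{eq:coupled-S};
\item \cref{cor:retarded-max} plus \cref{prop:product} with $\gamma=1/2+\eps_0$ gives \eqref{eq:coupled-M};
\item \cref{prop:refined} plus \cref{prop:product} with $\gamma=11/16+\eps_0$ gives the main terms of \eqref{eq:coupled-A}.
\end{itemize}
In each use of \cref{prop:product} you check \eqref{eq:gamma-condition} from \eqref{eq:eps-choice}, as the paper does.

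The one step that fails as written is your primary bound for the low-order forcing term $\norm{uu_x}_{L_T^2L^2}$. The claim $\norm{u_x}_{L^\infty}\lesssim\norm{u}_{H^s}$ needs $s>2$ in two dimensions, not $s>1$. You only know $u_x\in H^{s-1}$ with $s-1>3/16$. Moreover, an $E_s(T)^2$ term does not appear on the right of \eqref{eq:coupled-A}, so that route would not prove the stated inequality. Your remark $E_s^2\lesssim E_s(1+E_s)$ would change the statement rather than fix this.

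Your fallback is correct. The symbol of $J^\gamma$ is at least $1$, so
$\norm{uu_x}_{L_T^2L^2}\le\norm{J^{11/16+\eps_0}(uu_x)}_{L_T^2L^2}\lesssim M(T)S(T)+E_s(T)A(T)$.
The paper uses the even simpler H\"older bound
$\norm{uu_x}_{L_T^2L^2}\le\norm{u}_{L_T^\infty L^2}\norm{u_x}_{L_T^2L^\infty}\le E_s(T)A(T)$.
This is the bound that $A(T)$, being measured in $L_T^2L^\infty$, is set up to provide. Drop the Sobolev-embedding route and use either of these; the proof is then complete.
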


\begin{proof}
Estimate \eqref{eq:coupled-E} is the energy inequality of
\cref{lem:energy}, since
\[
 \int_0^T\norm{\nabla u(t)}_{L^\infty}\dd t
 \le T^{1/2}A(T).
\]

We next prove \eqref{eq:coupled-M}.  The retarded maximal estimate
applied to
\[
 u(t)=U(t)u_0-
 \int_0^tU(t-t')\bigl(uu_x\bigr)(t')\dd t'
\]
gives
\begin{equation}\label{eq:maximal-application}
 M(T)
 \lesssim \norm{u_0}_{H^s}
 +\norm{J^{1/2+\eps_0}(uu_x)}_{L_T^1L^2}.
\end{equation}
By Cauchy--Schwarz in time,
\begin{equation}\label{eq:maximal-time-cs}
 \norm{J^{1/2+\eps_0}(uu_x)}_{L_T^1L^2}
 \le T^{1/2}
 \norm{J^{1/2+\eps_0}(uu_x)}_{L_T^2L^2}.
\end{equation}
We apply \cref{prop:product} with
\[
 \gamma=\frac12+\eps_0,
 \qquad \sigma=s-\eps_0.
\]
The required inequality is
\[
 \gamma+\frac12=1+\eps_0<s-\eps_0,
\]
which follows from \eqref{eq:eps-choice}.  Substitution in
\eqref{eq:maximal-application}-\eqref{eq:maximal-time-cs} proves
\eqref{eq:coupled-M}.

We now use the refined Strichartz estimate.  Apply
\cref{prop:refined} to the equation
\begin{equation}\label{eq:refined-application-equation}
 \partial_tu+\mathcal Lu=-uu_x,
\end{equation}
where $\mathcal L$ is the linear BO--ZK operator, and take
$\eps=\eps_0$ in \eqref{eq:refined}.  We obtain
\begin{align}
 A(T)
 \lesssim{}&
 \norm{J^{19/16+\eps_0}u}_{L_T^\infty L^2}
 +\norm{J^{11/16+\eps_0}(uu_x)}_{L_T^2L^2}
 \notag\\
 &+\norm{u}_{L_T^\infty L^2}
 +\norm{uu_x}_{L_T^2L^2}.
 \label{eq:refined-applied}
\end{align}
The first and third terms are bounded by $E_s(T)$ because
$19/16+\eps_0<s$.  For the last term,
\begin{equation}\label{eq:forcing-low-order}
 \norm{uu_x}_{L_T^2L^2}
 \le \norm{u}_{L_T^\infty L^2}
 \norm{u_x}_{L_T^2L^\infty}
 \le E_s(T)A(T).
\end{equation}
Finally, apply \cref{prop:product} with
\begin{equation}\label{eq:refined-product-choice}
 \gamma=\frac{11}{16}+\eps_0,
 \qquad \sigma=s-\eps_0.
\end{equation}
The condition of that proposition is
\[
 \gamma+\frac12
 =\frac{19}{16}+\eps_0<s-\eps_0,
\]
and this again follows from \eqref{eq:eps-choice}.  Therefore
\begin{equation}\label{eq:refined-forcing-product}
 \norm{J^{11/16+\eps_0}(uu_x)}_{L_T^2L^2}
 \lesssim M(T)S(T)+E_s(T)A(T).
\end{equation}
Combining \eqref{eq:refined-applied},
\eqref{eq:forcing-low-order}, and
\eqref{eq:refined-forcing-product} proves \eqref{eq:coupled-A}.
This also shows explicitly where the exponents $19/16$ and $11/16$
from the refined Strichartz estimate enter the nonlinear argument.

Lastly, \eqref{eq:coupled-S} follows from \cref{prop:smoothing} with
$\sigma=s-\eps_0$.  The conditions $1<\sigma<s$ follow from
\eqref{eq:eps-choice}.
\end{proof}

\begin{proposition}\label{prop:small-data}
There exist constants $\delta_s>0$ and $C_s>0$ such that, if
\[
 \norm{u_0}_{H^s}\le\delta_s,
\]
then every smooth solution of \eqref{IVP} on $[0,T]$, $0<T\le1$,
satisfies
\begin{equation}\label{eq:small-bound}
 E_s(T)+M(T)+A(T)+S(T)
 \le C_s\norm{u_0}_{H^s}.
\end{equation}
Consequently, a smooth solution constructed by regularization extends
to the whole interval $[0,1]$.
\end{proposition}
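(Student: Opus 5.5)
The plan is a continuity (bootstrap) argument on the quantity
\[
 X(T)=E_s(T)+M(T)+A(T)+S(T),
\]
using the four coupled inequalities of \cref{prop:coupled}. For a smooth solution obtained by regularization, each of $E_s(T)$, $M(T)$, $A(T)$, $S(T)$ is finite. Each depends continuously on $T$, and each tends to a multiple of the corresponding quantity for the linear flow as $T\downarrow0$. For $M$ and $S$ this comes from the monotone structure of the mixed norms together with the finiteness bound \eqref{eq:positive-finite}; for $A$ and $E_s$ it is dominated convergence. I therefore work on the set of $T'\in(0,T]$ for which the bootstrap hypothesis $X(T')\le 2C_s\norm{u_0}_{H^s}$ holds. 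This set is nonempty: small $T'$ belongs to it by \cref{cor:retarded-max}, \cref{prop:refined} and \cref{prop:smoothing} applied at short times. It is closed by continuity. The remaining task is to show that the hypothesis improves to $X(T')\le \tfrac32C_s\norm{u_0}_{H^s}$, which gives openness.

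Write $\epsilon=\norm{u_0}_{H^s}\le\delta_s$, and assume $X(T')\le 2C_s\epsilon\ll1$. The estimates improve in the following order.

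First, \eqref{eq:coupled-E} together with $T'^{1/2}A\le 2C_s\epsilon$ gives $E_s\le C\epsilon e^{2CC_s\epsilon}\le 2C\epsilon$.

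Second, in \eqref{eq:coupled-S} every term is at most $E_s^2$ times $(1+O(X))$. Since $T'\le1$, this gives $S\le C\epsilon$, with a constant independent of $C_s$.

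Third, \eqref{eq:coupled-A} gives $A\le C(E_s+MS+E_sA)$. The products $MS$ and $E_sA$ are quadratic, of size $O(C_s^2\epsilon^2)$, so $A\le C\epsilon+C C_s^2\epsilon^2$.

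Fourth, \eqref{eq:coupled-M} similarly gives $M\le C\epsilon+CC_s^2\epsilon^2$.

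Summing the four bounds gives $X\le C'\epsilon+C''C_s^2\epsilon^2$. I choose $C_s=2C'$ and then $\delta_s$ so small that $C''C_s^2\delta_s\le C'/2$. With these choices $X\le \tfrac54C_s\epsilon$, which improves the hypothesis. It is essential that $C'$ is independent of $C_s$: the only linear-in-$X$ feedback enters through products with $E_s$, $M$ or $S$, and these are themselves small.

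For the extension statement, the smooth solution is continued as long as $\int_0^T\norm{\nabla u}_\infty$ stays finite; this is the standard blow-up alternative for the parabolic or high-regularity theory. The bound $A(T)\le C_s\epsilon$ on any $[0,T]\subset[0,1]$ of existence therefore prevents blow-up before time $1$. Repeating the bootstrap on the maximal interval then gives existence on all of $[0,1]$ with \eqref{eq:small-bound}.

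The main obstacle is the continuity and small-time behaviour of $M(T)$ and $S(T)$. These norms involve suprema over time and, in $S$, suprema over weight centres. One must check that they are continuous in $T$ for smooth frequency-truncated solutions, and that they are bounded by $C\epsilon$ as $T\to0$, rather than merely finite. My first approach is to work with frequency-truncated smooth solutions, where \eqref{eq:positive-finite} and Bernstein make all quantities finite and continuous. I would close the bootstrap with constants uniform in the truncation, which \cref{prop:smoothing} asserts, and then remove the truncation by Fatou's lemma.
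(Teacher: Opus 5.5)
Your proposal is correct and follows essentially the same route as the paper. It is a continuity argument on $E_s+M+A+S$ for smooth solutions, using \eqref{eq:coupled-E} first, then \eqref{eq:coupled-S}, then \eqref{eq:coupled-M}--\eqref{eq:coupled-A}, and closing with the standard continuation criterion; the only difference is cosmetic, in that the paper bootstraps under $\mathcal F(T)\le1$ and absorbs the resulting terms $C_sd_0(M+A)$, whereas you bootstrap under $X\le 2C_s\norm{u_0}_{H^s}$ and treat the products $MS$ and $E_sA$ as quadratic errors.
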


\begin{proof}
Let $d_0=\norm{u_0}_{H^s}$ and put
\[
 \mathcal F(T)=E_s(T)+M(T)+A(T)+S(T).
\]
For smooth solutions, $\mathcal F$ is continuous and nondecreasing.
Moreover, $A(T)+S(T)\to0$ as $T\downarrow0$, while Bernstein's
inequality, or the homogeneous part of \eqref{eq:retarded-max}, gives
\[
 \limsup_{T\downarrow0}M(T)
 \lesssim_s\norm{u_0}_{H^s}.
\]
Consequently
$\limsup_{T\downarrow0}\mathcal F(T)\le C_sd_0$; after decreasing
$\delta_s$, the bootstrap set $\{T:\mathcal F(T)<1\}$ is nonempty.
Assume that $\mathcal F(T)\le1$.  By \eqref{eq:coupled-E},
\begin{equation}\label{eq:bootstrap-energy-small}
 E_s(T)\le C_sd_0.
\end{equation}
Taking $\delta_s$ smaller if necessary, \eqref{eq:coupled-S} and
\eqref{eq:bootstrap-energy-small} imply
\begin{equation}\label{eq:bootstrap-smoothing-small}
 S(T)\le C_sd_0.
\end{equation}
Indeed, under the bootstrap assumption all factors multiplying
$E_s(T)^2$ in \eqref{eq:coupled-S} are bounded, and the remaining
terms are of order at least $E_s(T)^3$.

Using \eqref{eq:bootstrap-energy-small} and
\eqref{eq:bootstrap-smoothing-small} in
\eqref{eq:coupled-M}-\eqref{eq:coupled-A}, we obtain
\begin{align*}
 M(T)&\le C_sd_0+C_sd_0\bigl(M(T)+A(T)\bigr),\\
 A(T)&\le C_sd_0+C_sd_0\bigl(M(T)+A(T)\bigr).
\end{align*}
Choose $\delta_s$ so that the last terms can be absorbed.  It follows
that
\[
 \mathcal F(T)\le C_sd_0
\]
whenever $\mathcal F(T)\le1$.  Thus the bootstrap set is also closed
in the interval of existence, and the usual open-and-closed
continuity argument proves
\eqref{eq:small-bound} on every interval on which the smooth solution
exists.  This uniform bound prevents blowup of the regularized Sobolev
norms, so the standard continuation criterion extends the solution to
$[0,1]$.
\end{proof}

\section{Proof of the main theorem}\label{sec:proof-main}

\subsection{Scaling and lifespan}

We observe from \eqref{eq:scaling-intro} that the BO--ZK scaling is
\begin{equation}\label{eq:scaling}
 u_\lambda(x,y,t)
 =\lambda u(\lambda x,\lambda^{1/2}y,\lambda^2t).
\end{equation}
For $0<\lambda\le1$,
\begin{equation}\label{eq:scaling-Hs}
 \norm{u_\lambda(0)}_{H^s}
 \le\lambda^{1/4}\norm{u_0}_{H^s}.
\end{equation}
Indeed, after changing variables in Fourier space,
\[
 \norm{u_\lambda(0)}_{H^s}^2
 =\lambda^{1/2}\int
 (1+\lambda^2\xi^2+\lambda\eta^2)^s
 |\widehat{u_0}(\xi,\eta)|^2\dd\xi\dd\eta.
\]
Choose $\lambda$ so that $\lambda^{1/4}\norm{u_0}_{H^s}\le\delta_s$.  The small-data result on the unit interval rescales to a lifespan $T=\lambda^2$, proving \eqref{eq:lifespan}.

\subsection{Existence}

Let $u_{0,n}=P_{\le n}u_0$.  Standard regularized energy estimates
give a smooth solution $u_n$ on a maximal interval; this construction
is also contained in the Sobolev theories of
\cite{CunhaPastorLow,Nascimento2020}.  After scaling to the small-data regime, \cref{prop:small-data} is uniform in $n$ and extends every $u_n$ to $[0,1]$.

For $w_{n,m}=u_n-u_m$,
\[
 (w_{n,m})_t+\mathcal H_x(w_{n,m})_{xx}+(w_{n,m})_{xyy}
 +u_n(w_{n,m})_x+w_{n,m}(u_m)_x=0.
\]
Taking the $L^2$ inner product with $w_{n,m}$ gives
\begin{equation}\label{eq:L2-difference}
 \norm{w_{n,m}(t)}_2
 \le \norm{u_{0,n}-u_{0,m}}_2
 \exp\left(C\int_0^t
 (\norm{(u_n)_x}_\infty+\norm{(u_m)_x}_\infty)\dd\tau\right).
\end{equation}
Thus $(u_n)$ is Cauchy in $C([0,1];L^2)$.  The uniform $L^\infty H^s$ bound, interpolation, and local compactness yield a limit
\[
 u\in C([0,1];H^{s'}(\R^2))
 \quad\text{for every }s'<s,
\]
which solves \eqref{IVP} in distributions.  After passing to a
subsequence, the uniform bounds give weak-* convergence in
$L_T^\infty H^s$ and in each mixed-norm space defining $A$, $M$, and
$S$.  These mixed spaces are normed by the corresponding K\"othe
preduals: $L_T^2L^1_{x,y}$ for $A$, $L_x^2L^1_{y,T}$ and
$L_y^2L^1_{x,T}$ for $M$, and $L_x^1L^2_{y,T}$ and
$L_y^1L^2_{x,T}$ for $S$.  Testing against these preduals and taking
the supremum yields weak-* lower semicontinuity of all the asserted
auxiliary norms.  The frequency-envelope argument below proves the
convergence and time continuity to $C([0,1];H^s)$.

\subsection{Frequency envelopes and strong
\texorpdfstring{$H^s$}{Hs} continuity}

We record the dyadic estimate used both for strong convergence of the
smooth approximations and for the Bona--Smith argument.

\begin{lemma}\label{lem:dyadic-energy}
Let $u$ be a smooth solution and put $u_N=P_Nu$.  Then, for every
dyadic $N\ge1$,
\begin{equation}\label{eq:dyadic-energy}
 \frac{\dd}{\dd t}\norm{u_N}_2
 \le C\norm{\nabla u}_\infty
 \left(
 \sum_{N/8\le M\le8N}\norm{u_M}_2
 +\sum_{M\ge8N}\frac NM\norm{u_M}_2
 \right).
\end{equation}
The usual interpretation is made when $N=1$.
\end{lemma}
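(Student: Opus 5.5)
Apply $P_N$ to \eqref{IVP} and pair with $u_N$ in $L^2$. Since $P_N$ commutes with the skew-adjoint linear operator $\mathcal H_x\partial_x^2+\partial_x\partial_y^2$, the dispersive terms drop out and we get
\[
 \frac12\frac{\dd}{\dd t}\norm{u_N}_2^2=-\ip{P_N(uu_x)}{u_N}.
\]
It therefore suffices to show that $|\ip{P_N(uu_x)}{u_N}|$ is bounded by $\norm{\nabla u}_\infty\norm{u_N}_2$ times the bracket in \eqref{eq:dyadic-energy}. Dividing by $\norm{u_N}_2$ (or working with $(\norm{u_N}_2^2+\eps)^{1/2}$ and letting $\eps\to0$) then gives the claim.

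To bound the pairing, decompose $uu_x=\sum_{M_1,M_2}u_{M_1}\partial_xu_{M_2}$ and use Bony's decomposition relative to $N$. We split into three parts.

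\textbf{Low--high.} The low--high part is $\sum_{M\sim N}u_{<M/8}\partial_xu_M$. Write
\[
 P_N(u_{<M/8}\partial_xu_M)=u_{<M/8}\partial_xP_Nu_M+[P_N,u_{<M/8}]\partial_xu_M .
\]
\begin{itemize}
 \item The commutator has kernel $N^2\check\varphi_1(N\cdot)$ acting against a difference of $u_{<M/8}$. The standard first-order commutator estimate gives
 \[
 \norm{[P_N,u_{<M/8}]\partial_xu_M}_2\lesssim N^{-1}\norm{\nabla u}_\infty\norm{\partial_xu_M}_2\lesssim\norm{\nabla u}_\infty\norm{u_M}_2 .
 \]
 \item For the main term, sum over $M\sim N$ and rewrite it as $u_{<N/64}\partial_x u_N$ plus pieces of the form $(u_{<M/8}-u_{<N/64})\partial_xP_Nu_M$. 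In those pieces the low factor has frequency $\sim N$, so Bernstein gives $\norm{u_{<M/8}-u_{<N/64}}_\infty\lesssim N^{-1}\norm{\nabla u}_\infty$. This cancels the derivative on $u_M$.
 \item The remaining term $\ip{u_{<N/64}\partial_xu_N}{u_N}=-\tfrac12\ip{\partial_xu_{<N/64}\,u_N}{u_N}$ is handled by integration by parts and is bounded by $\norm{u_x}_\infty\norm{u_N}_2^2$.
\end{itemize}
This step is the main obstacle, since it is the only place where a derivative must be moved off the high-frequency factor. The nonsmooth Hilbert term plays no role because it has already cancelled.

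\textbf{High--low.} The high--low part is $\sum_{M\sim N}u_M\partial_xu_{<M/8}$. It is bounded directly by $\norm{u_x}_\infty\sum_{N/8\le M\le8N}\norm{u_M}_2$.

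\textbf{High--high.} The high--high part is $\sum_{M\ge8N}\sum_{M'\sim M}P_N(u_M\partial_xu_{M'})$. Write it as $\tfrac12 P_N\partial_x(u_Mu_{M'})$, so that $\norm{P_N\partial_x(\cdot)}_{2}\lesssim N\norm{\cdot}_2$. Then place the factor with the lower of the two comparable frequencies in $L^\infty$ using Bernstein:
\[
 \norm{u_{M'}}_\infty\lesssim M^{-1}\norm{\nabla u}_\infty,
\]
which holds because $u_{M'}$ is frequency localized away from zero at scale $M$. This yields the bound $\frac NM\norm{\nabla u}_\infty\norm{u_M}_2$, which is the tail sum in \eqref{eq:dyadic-energy}.

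The comparable-frequency interactions with $M\sim N$ that are not of the preceding types are again bounded by $\norm{\nabla u}_\infty\norm{u_M}_2$ after inverse Bernstein. For $N=1$, $P_1$ is the low projection: the low--high part is absent, and only the high--high and comparable sums remain, estimated as above. Summing all contributions gives \eqref{eq:dyadic-energy}.
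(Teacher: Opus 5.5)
Your proposal is correct and follows essentially the same route as the paper: Bony's decomposition, the low--high term handled by the commutator $[P_N,u_{<M/8}]\partial_x u_M$ plus integration by parts, and the high--high tail handled by writing the symmetric sum as $\tfrac12 P_N\partial_x(u_Mu_{M'})$ and using Bernstein on a factor at frequency $\sim M$ to get the weight $N/M$. Replacing the various low cutoffs $u_{<M/8}$ by a common $u_{<N/64}$, with the differences controlled by Bernstein at frequency $\sim N$, is a more careful version of the paper's phrase ``after allowing finitely many neighboring high frequencies.''
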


\begin{proof}
The linear terms are skew-adjoint.  Decompose $uu_x$ by Bony's
paraproduct.  In the low--high interaction, after allowing finitely
many neighboring high frequencies, write
\[
 P_N(u_{<N/8}\partial_xu_N)
 =u_{<N/8}\partial_xu_N
 +[P_N,u_{<N/8}]\partial_xu_N.
\]
The first term is integrated by parts and the commutator kernel gives
\[
 \big|\ip{P_N(u_{<N/8}\partial_xu_N)}{u_N}\big|
 \lesssim \norm{\nabla u}_\infty
 \norm{u_N}_2\sum_{M\simeq N}\norm{u_M}_2.
\]
The balanced interactions satisfy the same estimate by Bernstein and
Coifman--Meyer theory.

For a high--high interaction with $M\simeq M'\ge8N$, use
$uu_x=\frac12\partial_x(u^2)$ to place the derivative on the output:
\begin{align*}
 \norm{P_N\partial_x(u_Mu_{M'})}_2
 &\lesssim N\norm{u_M}_\infty\norm{u_{M'}}_2\\
 &\lesssim \frac NM\norm{\nabla u}_\infty\norm{u_{M'}}_2.
\end{align*}
Pairing with $u_N$, summing the finite overlaps, and dividing by
$\norm{u_N}_2$ proves \eqref{eq:dyadic-energy}; the zero case follows
by regularization.
\end{proof}

Let $e_N(t)=N^s\norm{u_N(t)}_2$.  Multiplication of
\eqref{eq:dyadic-energy} by $N^s$ gives
\begin{equation}\label{eq:dyadic-weighted-energy}
 \frac{\dd}{\dd t}e_N(t)
 \le C\norm{\nabla u}_\infty
 \left(
 \sum_{M\simeq N}\left(\frac NM\right)^s e_M(t)
 +\sum_{M\ge8N}\left(\frac NM\right)^{s+1}e_M(t)
 \right).
\end{equation}

Choose $0<\kappa<\min\{1,s+1\}$.  An admissible frequency envelope
is a positive sequence $(c_N)$ such that
\begin{equation}\label{eq:envelope-slow}
 e_N(0)\le c_N,
 \qquad
 c_M\le c_N\max\left\{\left(\frac MN\right)^\kappa,
 \left(\frac NM\right)^\kappa\right\}.
\end{equation}
For example, one may take
\begin{equation}\label{eq:envelope-definition}
 c_N=\left(\sum_M
 2^{-2\kappa|\log_2N-\log_2M|}e_M(0)^2\right)^{1/2}.
\end{equation}
Then $\sum_Nc_N^2\simeq\norm{u_0}_{H^s}^2$.  The slow variation in
\eqref{eq:envelope-slow} is used quantitatively in
\eqref{eq:dyadic-weighted-energy}: both kernels satisfy
\begin{equation}\label{eq:envelope-kernel-sum}
 \sum_{M\simeq N}\left(\frac NM\right)^s c_M
 +\sum_{M\ge8N}\left(\frac NM\right)^{s+1}c_M
 \le C_{s,\kappa}c_N.
\end{equation}
Consequently, with $X(t)=\sup_N e_N(t)/c_N$, integration of
\eqref{eq:dyadic-weighted-energy} and
\eqref{eq:envelope-kernel-sum} yields
\begin{equation}\label{eq:envelope-propagation}
 \sup_N\frac{N^s\norm{P_Nu(t)}_2}{c_N}
 \le
 \exp\left(C\int_0^t\norm{\nabla u(\tau)}_\infty\dd\tau\right).
\end{equation}

Apply \eqref{eq:envelope-propagation} to the approximate solutions
$u_n$, using the envelope of $u_0$; it also dominates
$P_{\le n}u_0$.  Since $(c_N)\in\ell^2$,
\begin{equation}\label{eq:uniform-tail-approximations}
 \lim_{K\to\infty}\sup_n\sup_{t\le1}
 \norm{P_{>K}u_n(t)}_{H^s}=0.
\end{equation}
Every fixed low-frequency truncation converges in
$C([0,1];H^s)$ by the $L^2$ convergence and Bernstein's inequality.
Together with \eqref{eq:uniform-tail-approximations}, this proves
$u_n\to u$ in $C([0,1];H^s)$ and hence
$u\in C([0,1];H^s)$.

For later use with a family of initial data, it is convenient to
record a tail version that does not require a common pointwise
$\ell^2$ envelope.  Let $K_0=2^J$ and define
\begin{equation}\label{eq:tail-weight}
 d_{J,N}=\min\left\{1,\left(\frac N{K_0}\right)^\kappa\right\}.
\end{equation}
The ratio of two such weights is bounded by the right-hand side of
\eqref{eq:envelope-slow}.  Multiplying
\eqref{eq:dyadic-weighted-energy} by $d_{J,N}$, applying the
$\ell^2$ Schur test, and using Gronwall gives
\begin{equation}\label{eq:weighted-tail-propagation}
 \left(\sum_Nd_{J,N}^2N^{2s}\norm{P_Nu(t)}_2^2\right)^{1/2}
 \le C
 \exp\left(C\int_0^t\norm{\nabla u(\tau)}_\infty\dd\tau\right)
 \left(\sum_Nd_{J,N}^2N^{2s}\norm{P_Nu_0}_2^2\right)^{1/2}.
\end{equation}
Because $d_{J,N}=1$ for $N\ge K_0$, the left-hand side controls the
$H^s$ tail above $K_0$.  If a set of initial data is compact in $H^s$,
the right-hand side of \eqref{eq:weighted-tail-propagation} tends to
zero uniformly on that set as $J\to\infty$.  This is the uniform tail
estimate used in the continuous-dependence argument below.

\subsection{Uniqueness}

Let $u,v$ be two solutions in the class \eqref{eq:solutionclass}, and set $w=u-v$.  Then
\[
 w_t+\mathcal H_xw_{xx}+w_{xyy}+uw_x+wv_x=0.
\]
To justify the energy identity at this regularity, apply a Friedrichs
multiplier $P_{\le L_0}$ to the difference equation, pair with
$P_{\le L_0}w$, and use the skew-adjointness of the linear multiplier.
The transport commutators are bounded by
$C(\norm{u_x}_\infty+\norm{v_x}_\infty)\norm w_2^2$, uniformly in
$L_0$.  Passing to the limit $L_0\to\infty$ by dominated convergence
gives
\begin{equation}\label{eq:uniqueness}
 \frac{\dd}{\dd t}\norm w_2^2
 \lesssim(\norm{u_x}_\infty+\norm{v_x}_\infty)\norm w_2^2.
\end{equation}
Since $u_x,v_x\in L^1_TL^\infty$ by Cauchy--Schwarz in time, Gronwall proves uniqueness and Lipschitz dependence in $L^2$.

\subsection{Continuous dependence}

Let $u_{0,n}\to u_0$ in $H^s$ and let $u_n,u$ be the corresponding
solutions on a common interval determined by a fixed $H^s$ ball.  The
$L^2$ difference estimate gives convergence in $C_TL^2$.  The set
$\{u_0\}\cup\{u_{0,n}:n\ge1\}$ is compact in $H^s$; therefore
\eqref{eq:weighted-tail-propagation} makes the high-frequency tails
of $u_n$ and $u$ uniformly small.  For every fixed $K_0$, Bernstein's
inequality and the $L^2$ convergence give
\[
 \norm{P_{\le K_0}(u_n-u)}_{C([0,T];H^s)}
 \lesssim K_0^s\norm{u_n-u}_{C([0,T];L^2)}\longrightarrow0.
\]
Letting first $n\to\infty$ and then $K_0\to\infty$ proves
\[
 \norm{u_n-u}_{C([0,T];H^s)}\longrightarrow0.
\]
This is the Bona--Smith argument \cite{BonaSmith,CunhaPastorLow};
it establishes continuity, but not the higher differentiability of
the flow map ruled out in \cite{EsfahaniPastor}.  This completes the
proof of \cref{thm:main}.

\section{Concluding remarks}\label{sec:concluding}

The proof combines three features of the BO--ZK phase.  Product-frequency
kernel estimates quantify the loss of curvature in the parabolic region
and yield the mixed maximal estimate with frequency power \(N^{1/2}\).
The longitudinal and transverse group velocities do not vanish
simultaneously, so signed positive commutators recover local smoothing
on four complementary charts.  Finally, in the chart-transition region,
the normal low--high interaction is nonresonant by one full
high-frequency power; a cubic modification of the localized energy
converts this gain into a summable quartic remainder.

The threshold can be read directly from the two parameters in the
refined Strichartz argument. For an admissible exponent \(p>8/3\) and
a short-time scale \(N^{-\vartheta}\), the nonlinear closure requires
\[
 s>\max\left\{a_\vartheta(p),
 b_\vartheta(p)+\frac12\right\},
\]
where \(a_\vartheta(p)-b_\vartheta(p)=\vartheta\).  Since
\(a_\vartheta(p)\) is increasing and
\(b_\vartheta(p)+1/2\) is decreasing in \(\vartheta\), the two
constraints balance uniquely at \(\vartheta=1/2\).  At this scale their
common value is \(2-13/(6p)\), and therefore
\begin{equation}\label{eq:minmax}
 \inf_{\substack{p>8/3\\0\le\vartheta\le1}}
 \max\left\{a_\vartheta(p),
 b_\vartheta(p)+\frac12\right\}
 =\frac{19}{16}.
\end{equation}
The infimum is approached as \(p\downarrow8/3\); it is not attained
because the admissible range is strict. 

Equation \eqref{eq:minmax} evaluates the present scheme and is not a
sharpness statement.  The formal scaling index is \(s_c=-1/4\), while
no norm-inflation or failure-of-continuity result is known that provides
a comparable lower bound for isotropic well-posedness.  Possible routes
below \(19/16\) include a genuinely anisotropic resummation of the
product blocks, in the spirit of the maximal-function analysis for ZK
equations \cite{RibaudVento3D,LinaresRamos}; a short-time
Fourier-restriction framework in isotropic Sobolev spaces, coupled to a
modified energy \cite{IonescuKenigTataru,MolinetPilod,RibaudVento}; and
transversality estimates based on nonlinear Loomis--Whitney inequalities
\cite{BennettCarberyWright,Kinoshita2D,HerrKinoshita,KinoshitaSchippa}.
The last approach would have to accommodate the \(C^{1,1}\)
characteristic surface across \(\xi=0\).  Each route must still address
the low-longitudinal-frequency resonance that rules out a direct Picard
iteration.  Thus the remaining gap to scaling reflects the present
analytic method rather than a conjectured threshold.

\appendix

\section{Computational consistency checks}
\label{sec:computational}

A public computational notebook accompanies the manuscript and provides
reproducible symbolic and numerical consistency checks organized in the
same order as the analytical development. Exact symbolic calculations
check the phase derivatives, the Hessian determinant, the resonance
identity and its derivatives, the refined Strichartz exponents, and the
scaling arithmetic. Finite-dimensional Fourier calculations and
dealiased pseudospectral experiments examine selected energy,
commutator, kernel, local-smoothing, product, and frequency-envelope
relations over finite sampled ranges.

The kernel experiment tests the sampled dispersive profiles associated
with \cref{prop:kernel} and separately checks the scaling of the
nonstationary tail majorant used in its proof. In addition, a static
concentration experiment based on an annular Fourier profile measures
the ratio between the mixed maximal norm and the $L^2$ norm for a
dyadic family of rescaled functions. The resulting log-log slope is
$1/2$, reproducing the scaling mechanism described in
\cref{rem:sharpness} and illustrating the unavoidable $N^{1/2}$
frequency power in \cref{thm:maximal}.

These computations are intended solely as reproducibility,
falsification, and consistency checks. They are performed at finite
resolution, on finite computational domains, and over finite sets of
parameters. Consequently, they do not establish any of the analytic
estimates used in the article. Every mathematical statement required
for the proof is established independently in the preceding sections,
and no argument depends on the numerical output.

\section*{Data availability}

No datasets were generated or analyzed for this study. The
manuscript-synchronized Python notebook used for the symbolic and
numerical consistency checks will be publicly available after request.

\section*{Funding}

This research did not receive any specific grant from funding agencies
in the public, commercial, or not-for-profit sectors.

\section*{Declaration of competing interest}

The author declares that he has no known competing financial interests
or personal relationships that could have appeared to influence the
work reported in this paper.

\section*{Declaration of Generative AI and AI-assisted technologies in the writing process}

During the preparation of this work, the author used Claude
(Anthropic) and ChatGPT (OpenAI) to assist in organizing the exposition
of the frequency-adapted microlocal analysis, improving language and
clarity, and preparing, refactoring, and diagnostically reviewing code
for symbolic and numerical consistency checks.  An exchange with these
systems suggested organizing the local-smoothing argument through
frequency-adapted microlocal charts.  The author subsequently
formulated and proved the chart construction, maximal-function
estimate, normal-form correction, and all related estimates, and
independently verified every definition, statement, calculation,
reference, and numerical value.  After using these tools, the author
reviewed and edited the manuscript as needed and takes full
responsibility for the content of the article.  Generative AI systems
were not used as substitutes for mathematical proof or verification.


\begin{thebibliography}{99}

\bibitem{Benjamin}
T. B. Benjamin,
\emph{Internal waves of permanent form in fluids of great depth},
J. Fluid Mech. \textbf{29} (1967), 559--592.

\bibitem{BennettCarberyWright}
J. Bennett, A. Carbery, and J. Wright,
\emph{A non-linear generalisation of the Loomis--Whitney inequality and applications},
Math. Res. Lett. \textbf{12} (2005), 443--457.
\href{https://doi.org/10.4310/MRL.2005.v12.n4.a1}{doi:10.4310/MRL.2005.v12.n4.a1}.

\bibitem{BonaSmith}
J.~L. Bona and R. Smith,
\emph{The initial-value problem for the Korteweg--de Vries equation},
Philos. Trans. Roy. Soc. London Ser. A \textbf{278} (1975), 555--601.
\href{https://doi.org/10.1098/rsta.1975.0035}{doi:10.1098/rsta.1975.0035}.

\bibitem{CalderonVaillancourt}
A.~P. Calder\'on and R. Vaillancourt,
\emph{On the boundedness of pseudo-differential operators},
J. Math. Soc. Japan \textbf{23} (1971), no.~2, 374--378.
\href{https://doi.org/10.2969/jmsj/02320374}{doi:10.2969/jmsj/02320374}.

\bibitem{CunhaPastorWeighted}
A. Cunha and A. Pastor,
\emph{The IVP for the Benjamin--Ono--Zakharov--Kuznetsov equation in weighted Sobolev spaces},
J. Math. Anal. Appl. \textbf{417} (2014), 660--693.
\href{https://doi.org/10.1016/j.jmaa.2014.03.056}{doi:10.1016/j.jmaa.2014.03.056}.

\bibitem{CunhaPastorLow}
A. Cunha and A. Pastor,
\emph{The IVP for the Benjamin--Ono--Zakharov--Kuznetsov equation in low regularity Sobolev spaces},
J. Differential Equations \textbf{261} (2016), 2041--2067.
\href{https://doi.org/10.1016/j.jde.2016.04.022}{doi:10.1016/j.jde.2016.04.022}.

\bibitem{EsfahaniPastor}
A. Esfahani and A. Pastor,
\emph{Ill-posedness results for the (generalized) Benjamin--Ono--Zakharov--Kuznetsov equation},
Proc. Amer. Math. Soc. \textbf{139} (2011), 943--956.
\href{https://doi.org/10.1090/S0002-9939-2010-10532-4}{doi:10.1090/S0002-9939-2010-10532-4}.

\bibitem{EsfahaniPastorInstability}
A. Esfahani and A. Pastor,
\emph{Instability of solitary wave solutions for the generalized BO--ZK equation},
J. Differential Equations \textbf{247} (2009), 3181--3201.
\href{https://doi.org/10.1016/j.jde.2009.09.014}{doi:10.1016/j.jde.2009.09.014}.

\bibitem{EsfahaniPastorUCP}
A. Esfahani and A. Pastor,
\emph{On the unique continuation property for Kadomtsev--Petviashvili-I and Benjamin--Ono--Zakharov--Kuznetsov equations},
Bull. Lond. Math. Soc. \textbf{43} (2011), 1130--1140.
\href{https://doi.org/10.1112/blms/bdr048}{doi:10.1112/blms/bdr048}.

\bibitem{EsfahaniPastorSharp}
A. Esfahani and A. Pastor,
\emph{Sharp constant of an anisotropic Gagliardo--Nirenberg-type inequality and applications},
Bull. Braz. Math. Soc. (N.S.) \textbf{48} (2017), 171--185.
\href{https://doi.org/10.1007/s00574-016-0017-5}{doi:10.1007/s00574-016-0017-5}.

\bibitem{EsfahaniPastorBona}
A. Esfahani, A. Pastor, and J.~L. Bona,
\emph{Stability and decay properties of solitary-wave solutions for the generalized BO--ZK equation},
Adv. Differential Equations \textbf{20} (2015), 801--834.
\href{https://doi.org/10.57262/ade/1435064514}{doi:10.57262/ade/1435064514}.

\bibitem{GrafakosOh}
L. Grafakos and S. Oh,
\emph{The Kato--Ponce inequality},
Comm. Partial Differential Equations \textbf{39} (2014), 1128--1157.
\href{https://doi.org/10.1080/03605302.2013.822885}{doi:10.1080/03605302.2013.822885}.

\bibitem{Hormander}
L. H\"ormander,
\emph{The Analysis of Linear Partial Differential Operators I},
Springer-Verlag, Berlin, 1983.

\bibitem{HormanderIII}
L. H\"ormander,
\emph{The Analysis of Linear Partial Differential Operators III:
Pseudo-Differential Operators},
Grundlehren der mathematischen Wissenschaften, vol.~274,
Springer-Verlag, Berlin, 1985.
\href{https://doi.org/10.1007/978-3-540-49938-1}{doi:10.1007/978-3-540-49938-1}.

\bibitem{HerrKinoshita}
S. Herr and S. Kinoshita,
\emph{Subcritical well-posedness results for the Zakharov--Kuznetsov equation in dimension three and higher},
Ann. Inst. Fourier (Grenoble) \textbf{73} (2023), no.~3, 1203--1267.
\href{https://doi.org/10.5802/aif.3547}{doi:10.5802/aif.3547}.

\bibitem{IonescuKenigTataru}
A.~D. Ionescu, C.~E. Kenig, and D. Tataru,
\emph{Global well-posedness of the KP-I initial-value problem in the energy space},
Invent. Math. \textbf{173} (2008), 265--304.
\href{https://doi.org/10.1007/s00222-008-0115-0}{doi:10.1007/s00222-008-0115-0}.

\bibitem{KatoPonce}
T. Kato and G. Ponce,
\emph{Commutator estimates and the Euler and Navier--Stokes equations},
Comm. Pure Appl. Math. \textbf{41} (1988), 891--907.

\bibitem{KenigKPI}
C.~E. Kenig,
\emph{On the local and global well-posedness theory for the KP-I equation},
Ann. Inst. H. Poincar\'e Anal. Non Lin\'eaire \textbf{21} (2004),
no.~6, 827--838.
\href{https://doi.org/10.1016/j.anihpc.2003.12.002}{doi:10.1016/j.anihpc.2003.12.002}.

\bibitem{KenigKoenig}
C.~E. Kenig and K.~D. Koenig,
\emph{On the local well-posedness of the Benjamin--Ono and modified Benjamin--Ono equations},
Math. Res. Lett. \textbf{10} (2003), 879--895.
\href{https://doi.org/10.4310/MRL.2003.v10.n6.a13}{doi:10.4310/MRL.2003.v10.n6.a13}.

\bibitem{KenigZiesler}
C.~E. Kenig and S.~N. Ziesler,
\emph{Maximal function estimates with applications to a modified Kadomstev--Petviashvili equation},
Commun. Pure Appl. Anal. \textbf{4} (2005), 45--91.
\href{https://doi.org/10.3934/cpaa.2005.4.45}{doi:10.3934/cpaa.2005.4.45}.

\bibitem{Kinoshita2D}
S. Kinoshita,
\emph{Global well-posedness for the Cauchy problem of the Zakharov--Kuznetsov equation in 2D},
Ann. Inst. H. Poincar\'e C Anal. Non Lin\'eaire \textbf{38} (2021), no.~2, 451--505.
\href{https://doi.org/10.1016/j.anihpc.2020.08.003}{doi:10.1016/j.anihpc.2020.08.003}.

\bibitem{KinoshitaSchippa}
S. Kinoshita and R. Schippa,
\emph{Loomis--Whitney-type inequalities and low regularity well-posedness of the periodic Zakharov--Kuznetsov equation},
J. Funct. Anal. \textbf{280} (2021), no.~6, Paper No.~108904, 53~pp.
\href{https://doi.org/10.1016/j.jfa.2020.108904}{doi:10.1016/j.jfa.2020.108904}.

\bibitem{KochTzvetkov}
H. Koch and N. Tzvetkov,
\emph{Local well-posedness of the Benjamin--Ono equation in $H^s(\R)$},
Int. Math. Res. Not. \textbf{2003} (2003), no.~26, 1449--1464.
\href{https://doi.org/10.1155/S1073792803211260}{doi:10.1155/S1073792803211260}.

\bibitem{KohnNirenberg}
J.~J. Kohn and L. Nirenberg,
\emph{An algebra of pseudo-differential operators},
Comm. Pure Appl. Math. \textbf{18} (1965), 269--305.
\href{https://doi.org/10.1002/cpa.3160180121}{doi:10.1002/cpa.3160180121}.

\bibitem{Latorre}
J.~C. Latorre, A.~A. Minzoni, N.~F. Smyth, and C.~A. Vargas,
\emph{Evolution of Benjamin--Ono solitons in the presence of weak Zakharov--Kuznetsov lateral dispersion},
Chaos \textbf{16} (2006), 043103.
\href{https://doi.org/10.1063/1.2355555}{doi:10.1063/1.2355555}.

\bibitem{LinaresPilodSaut}
F. Linares, D. Pilod, and J.-C. Saut,
\emph{The Cauchy problem for the fractional Kadomtsev--Petviashvili equations},
SIAM J. Math. Anal. \textbf{50} (2018), no.~3, 3172--3209.
\href{https://doi.org/10.1137/17M1145379}{doi:10.1137/17M1145379}.

\bibitem{LinaresRamos}
F. Linares and J.~P.~G. Ramos,
\emph{Maximal function estimates and local well-posedness for the generalized Zakharov--Kuznetsov equation},
SIAM J. Math. Anal. \textbf{53} (2021), 914--936.
\href{https://doi.org/10.1137/20M1344524}{doi:10.1137/20M1344524}.

\bibitem{MolinetPilod}
L. Molinet and D. Pilod,
\emph{Bilinear Strichartz estimates for the Zakharov--Kuznetsov equation and applications},
Ann. Inst. H. Poincar\'e Anal. Non Lin\'eaire \textbf{32} (2015), no.~2, 347--371.
\href{https://doi.org/10.1016/j.anihpc.2013.12.003}{doi:10.1016/j.anihpc.2013.12.003}.

\bibitem{MolinetSautTzvetkov}
L. Molinet, J.-C. Saut, and N. Tzvetkov,
\emph{Ill-posedness issues for the Benjamin--Ono equation and related equations},
SIAM J. Math. Anal. \textbf{33} (2001), no.~4, 982--988.
\href{https://doi.org/10.1137/S0036141001385307}{doi:10.1137/S0036141001385307}.

\bibitem{NascimentoControl2026}
A.~C. Nascimento,
\emph{Stabilization and controllability for the Benjamin--Ono--Zakharov--Kuznetsov equation on $\mathbb T^2$},
preprint, 2026.

\bibitem{NascimentoDGBOZK2026}
A.~C. Nascimento,
\emph{Stabilization of dispersion-generalized Benjamin--Ono--Zakharov--Kuznetsov},
preprint, 2026.

\bibitem{Nascimento2020}
A.~C. Nascimento,
\emph{On special regularity properties of solutions of the Benjamin--Ono--Zakharov--Kuznetsov (BO--ZK) equation},
Commun. Pure Appl. Anal. \textbf{19} (2020), 4285--4325.
\href{https://doi.org/10.3934/cpaa.2020194}{doi:10.3934/cpaa.2020194}.

\bibitem{Ono}
H. Ono,
\emph{Algebraic solitary waves in stratified fluids},
J. Phys. Soc. Japan \textbf{39} (1975), 1082--1091.

\bibitem{RibaudVento}
F. Ribaud and S. Vento,
\emph{Local and global well-posedness results for the Benjamin--Ono--Zakharov--Kuznetsov equation},
Discrete Contin. Dyn. Syst. \textbf{37} (2017), 449--483.
\href{https://doi.org/10.3934/dcds.2017019}{doi:10.3934/dcds.2017019}.

\bibitem{RibaudVento3D}
F. Ribaud and S. Vento,
\emph{Well-posedness results for the three-dimensional Zakharov--Kuznetsov equation},
SIAM J. Math. Anal. \textbf{44} (2012), no.~4, 2289--2304.
\href{https://doi.org/10.1137/110850566}{doi:10.1137/110850566}.

\bibitem{Stein}
E.~M. Stein,
\emph{Harmonic Analysis: Real-Variable Methods, Orthogonality, and Oscillatory Integrals},
Princeton University Press, Princeton, NJ, 1993.

\bibitem{TaylorPDO}
M.~E. Taylor,
\emph{Pseudodifferential Operators and Nonlinear PDE},
Progress in Mathematics, vol.~100, Birkh\"auser, Boston, 1991.
\href{https://doi.org/10.1007/978-1-4612-0431-2}{doi:10.1007/978-1-4612-0431-2}.

\end{thebibliography}
\end{document}